\renewcommand{\to}[1][]{\xrightarrow{\ #1\ }}
\newcommand{\forget}[1]{}  
\renewcommand{\theenumi}{\@roman\c@enumi}
\renewcommand{\phi}{\varphi}
\renewcommand{\epsilon}{\varepsilon}
\renewcommand{\theta}{\vartheta}
\def\ZZ{{\mathbf Z}}
\def\NN{{\mathbf N}}
\def\RR{{\mathbf R}}
\def\QQ{{\mathbf Q}}
\def\PP{{\mathbf P}}
\def\<{\langle}
\def\>{\rangle}
\def\bu{\mathbf{u}}
\def\cE{\mathcal{E}}
\def\cF{\mathcal{F}}
\def\cL{\mathcal{L}}
\def\cO{\mathcal{O}}
\def\cM{\mathcal{M}}
\DeclareMathOperator{\divisor}{div}
\DeclareMathOperator{\Gr}{Gr}
\def\Fl{F\ell}
\DeclareMathOperator{\Sym}{Sym}
 \DeclareMathOperator{\Pic}{Pic}
\newtheorem{lemma}{Lemma}[section]
\newtheorem{theorem}[lemma]{Theorem}
\newtheorem{corollary}[lemma]{Corollary}
\newtheorem{proposition}[lemma]{Proposition}
\newtheorem*{claim}{Claim}
\newtheorem*{KCT}{Klyachko's Classification Theorem}
\theoremstyle{definition}
\newtheorem{remark}[lemma]{Remark}
\newtheorem{example}[lemma]{Example}
\newtheorem{question}[lemma]{Question}
\theoremstyle{remark}
\newtheorem*{remark*}{Remark}
\newtheorem*{note*}{Note}
\begin{document}

\title{Positivity for toric vector bundles}

\author[M.~Hering]{Milena Hering}
\address{Institute of Mathematics and its Applications,
University of Minnesota, 400 Lind Hall, Minneapolis, MN 55455 USA \ \ \tt hering@ima.umn.edu}

\author[M. Musta\c{t}\u{a}]{Mircea~Musta\c{t}\u{a}}
\address{Department of Mathematics, University of Michigan,
East Hall, Ann Arbor, MI 48109, USA \ \ \tt mmustata@umich.edu}

\author[S.~Payne]{Sam Payne}
\address{Department of Mathematics, Stanford University, Bldg 380, Stanford, CA 94305, USA  \ \ \tt spayne@stanford.edu}

\markboth{M.~Hering, M.~Musta\c t\u a and S.~Payne}{Positivity
properties of toric vector bundles}

\begin{abstract}
We show that a torus-equivariant vector bundle on a complete toric variety is nef or ample if and only if its restriction to every invariant curve is nef or ample, respectively.  Furthermore, we show that nef toric vector bundles have a nonvanishing global section at every point and deduce that the underlying vector bundle is trivial if and only if its restriction to every invariant curve is trivial.  We apply our methods and results to study, in particular, the vector bundles $\cM_L$ that arise as the kernel of the evaluation map $H^0(X,L) \otimes \cO_X \rightarrow L$, for ample line bundles $L$. We give examples of 
twists of such bundles that are ample but not globally generated.

\bigskip

\noindent \textsc{R\'esum\'e}.  Nous prouvons qu'un fibr\'e vectoriel equivariant sur une vari\'et\'e torique compl\`ete est nef ou ample si et seulement si sa restriction \`a chaque courbe invariante est nef ou ample, respectivement.  Nous montrons \'egalement qu'\'etant donne un fibr\'e vectoriel torique nef $\cE$ et un point $x\in X$, il existe une section de $\cE$ non-nulle en $x$; on d\'eduit de cela que $\cE$ est trivial si et seulement si sa restriction \`a chaque courbe invariante est triviale.  Nous appliquons ces 
r\'esultats et m\'ethodes pour \'etudier en particulier les fibr\'es vectoriels $\cM_L$, 
 d\'efinis en tant que noyau des applications d'\'evaluation 
 $H^0(X,L) \otimes \cO_X \rightarrow L$, ou $L$
 est un fibr\'e en droites ample.  Finalement, nous donnons des exemples des fibr\'es vectoriels toriques qui sont amples mais non engendr\'{e}s par leur sections globales.
\end{abstract}

\thanks{2000\,\emph{Mathematics Subject Classification}.
 Primary 14M25; Secondary 14F05.
\newline The second author
  was partially supported by NSF grant
 DMS 0500127 and by a Packard Fellowship.
The third author was supported by the Clay Mathematics Institute}
\keywords{Toric variety, toric vector bundle}

\maketitle

\section{Introduction}

Let $X$ be a complete toric variety. If $L$ is a line bundle on $X$,
then various positivity properties of $L$ admit explicit
interpretations in terms of convex geometry. These interpretations can be
used to deduce special properties of toric line bundles. For
example, if $L$ is nef then it is globally generated. Moreover, $L$
is nef or ample if and only if the intersection number of $L$ with
every invariant curve is nonnegative or positive, respectively.  In this paper, we investigate the extent to which such techniques and results extend to equivariant vector bundles of higher rank.

Our first main result Theorem~\ref{char_nef} says that nefness and ampleness can be detected by
restricting to invariant curves also in higher rank. More precisely,
if $\cE$ is an equivariant vector bundle on $X$, then $\cE$ is nef or ample if and only if for every invariant curve $C$ on $X$, the
restriction $\cE\vert_C$ is nef or ample, respectively. Note that such a curve $C$
is isomorphic to $\PP^1$ (by convention, when considering \emph{invariant curves}, we assume that
they are irreducible), and therefore
$$\cE\vert_C\simeq\cO_{\PP^1}(a_1)\oplus\cdots\oplus\cO_{\PP^1}(a_r)$$
for suitable $a_1,\ldots,a_r\in\ZZ$. In this case $\cE\vert_C$ is
nef or ample if and only if all $a_i$ are nonnegative or positive, respectively. We apply the above result in Section~\ref{Qsection} to describe the Seshadri
constant of an equivariant vector bundle $\cE$ on a smooth toric
variety $X$ in terms of the decompositions of the restrictions of
$\cE$ to the invariant curves in $X$.

The characterization of nef and ample line bundles has an
application in the context of the bundles $\cM_L$ that appear as the kernel of the 
evaluation map $H^0(X,L) \otimes \cO_X \rightarrow L$, for globally generated line bundles $L$. We show that if $C$ is an invariant
curve on $X$, and $L$ is ample, then $\cM_L\vert_C$ is isomorphic to $\cO_{\PP^1}^{\oplus
a}\oplus\cO_{\PP^1}(-1)^{\oplus b}$ for nonnegative integers $a$ and
$b$. We then deduce that, for any ample line bundle $L'$ on $X$, the tensor product $\cM_L\otimes L'$ is nef.

Our second main result Theorem~\ref{sections} says that if $\cE$ is a nef equivariant
vector bundle on $X$ then, for every point $x\in X$, there is a
global section $s\in H^0(X,\cE)$ that does not vanish at $x$.  This
generalizes the well-known fact that nef line bundles on toric varieties are globally generated. 
On the other hand, we give examples of  ample toric vector bundles that are not globally generated
(see Examples~\ref{example12} and \ref{example13}).

The proof of Theorem~\ref{sections} relies on a description of toric vector bundles in terms of piecewise-linear families of filtrations, introduced by the third author in \cite{Payne}, that continuously interpolate the filtrations appearing in Klyachko's Classification Theorem \cite{Klyachko}.  As an
application of this result, we show that if $\cE$ is a toric vector bundle on a complete toric variety, then
$\cE$ is trivial (disregarding the equivariant structure) if and
only if its restriction to each invariant curve $C$ on $X$ is
trivial. This gives a positive answer to a question of V.~Shokurov.

In the  final section of the paper we discuss several open problems.

\subsection{Acknowledgments}
We thank Bill Fulton for many discussions,
Vyacheslav V.~Sho\-kurov for asking the question that led us to
Theorem~\ref{trivial}, and Jose Gonzalez for his comments on a preliminary version of this paper.

\section{Ample and nef toric vector bundles}

We work over an algebraically closed field $k$ of arbitrary
characteristic. Let $N\simeq\ZZ^n$ be a lattice, $M$ its dual lattice, $\Delta$ a fan in
$N_{\RR}=N\otimes_{\ZZ}\RR$, and $X=X(\Delta)$ the corresponding
toric variety. Then $X$ is a normal $n$-dimensional variety containing a dense open torus $T \simeq (k^*)^n$ such
that the natural action of $T$ on itself extends to an action of $T$
on $X$. In this section, we always assume that $X$ is complete, which means that the
support $|\Delta|$ is equal to $N_{\RR}$. For basic facts about
toric varieties we refer to \cite{Fulton}.

An \emph{equivariant} (or \emph{toric}) \emph{vector bundle} $\cE$
on $X$ is a locally free sheaf of finite rank on $X$ with a
$T$-action on the corresponding geometric vector bundle ${\mathbf
V}(\cE)={\mathcal Spec}({\rm Sym}(\cE))$ such that the projection
$\phi\colon {\mathbf V}(\cE)\to X$ is equivariant and $T$ acts linearly on the
fibers of $\phi$. In this case, note that the projectivized vector
bundle $\PP(\cE)={\mathcal Proj}({\rm Sym}(\cE))$ also has a
$T$-action such that the projection $\pi\colon \PP(\cE)\to X$ is
equivariant. Neither $\mathbf V(\cE)$ nor $\PP(\cE)$ is a toric
variety in general. However, every line bundle on $X$ admits an
equivariant structure, so if $\cE$
 splits as a sum of line bundles $\cE\simeq
\cL_1\oplus\cdots\oplus \cL_r$, then $\cE$ admits an equivariant
structure. In this case, both ${\mathbf V}(\cE)$ and $\PP(\cE)$
admit the structure of a toric variety; see \cite[pp. 58--59]{Oda}.

Note  that given an equivariant vector bundle $\cE$ on
$X$, we get an induced algebraic action of $T$ on the vector space of sections
$\Gamma(U_{\sigma},\cE)$, for every cone $\sigma\in\Delta$. In fact,
$\cE$ is determined as an equivariant vector bundle by the
$T$-vector spaces $\Gamma(U_{\sigma},\cE)$ (with the corresponding
gluing over $U_{\sigma_1}\cap U_{\sigma_2}$). Moreover, if $\sigma$
is a top-dimensional cone, and if $x_{\sigma}\in X$ is the
corresponding fixed point, then we get a $T$-action also on the
fiber $\cE\otimes k(x_{\sigma})$ of $\cE$ at $x_{\sigma}$ such that
the linear map $\Gamma(U_{\sigma},\cE)\to\cE\otimes k(x_{\sigma})$
is $T$-equivariant.

Given an algebraic action of $T$ on a vector space $V$, we get a
decomposition
$$V=\oplus_{u\in M}V_u,$$
where $V_u$ is the $\chi^u$-isotypical component of $V$, which means that $T$
acts on $V_u$ via the character $\chi^u$. For every $w\in M$, the
(trivial) line bundle $\cL_w:=\cO(\divisor\chi^{w})$ has a
canonical equivariant structure, induced by the inclusion of $\cL_w$
in the function field of $X$. For every cone $\sigma\in\Delta$ we
have
$$\Gamma(U_{\sigma},\cL_w)=\chi^{-w}\cdot k[\sigma^{\vee}\cap M],$$
and $\Gamma(U_{\sigma},\cL_w)_u=k\cdot\chi^{-u}$ (when
$w-u$ is in $\sigma^{\vee}$). Note that this is compatible with the 
convention that $T$ acts on $k\cdot\chi^u\subseteq
\Gamma(U_{\sigma},\cO_X)$ by $\chi^{-u}$ (we follow the standard convention
in invariant theory for the action of the group on the ring of functions; in toric geometry
one often reverses the sign of $u$ in this convention, making use of the fact that the torus is an abelian group). We also point out that if
$\sigma$ is a maximal cone, then $T$ acts on the fiber of $\cL_w$ at
$x_{\sigma}$ by $\chi^{w}$. It is known that every equivariant line
bundle on $U_{\sigma}$ is equivariantly isomorphic to some
$\cL_w\vert_{U_{\sigma}}$, where the class of $w$ in
$M/M\cap\sigma^{\perp}$ is uniquely determined.

For every cone $\sigma\in\Delta$, the restriction
$\cE\vert_{U_{\sigma}}$ decomposes as a direct sum of equivariant
line bundles $\cL_1\oplus\cdots\oplus\cL_r$. Moreover, each such
$\cL_i$ is equivariantly isomorphic to some
$\cL_{u_i}\vert_{U_{\sigma}}$, where the class of $u_i$ is uniquely
determined in $M/M\cap\sigma^{\perp}$. If $\sigma$ is a
top-dimensional cone, then in fact the multiset $\{u_1,\ldots,u_r\}$
is uniquely determined by $\cE$ and $\sigma$.

\bigskip

A vector bundle $\cE$ on $X$ is \emph{nef} or \emph{ample} if the line bundle $\cO(1)$ on $\PP(\cE)$ is nef or ample,
respectively.  For basic results about nef and ample vector bundles,
as well as the big vector bundles and $\QQ$-twisted vector bundles discussed below,
see \cite[Chapter~6]{positivity}.  It is well-known that a line bundle on a complete
toric variety is nef or ample if and only if its restriction to each invariant curve is so.
The following theorem extends this result to toric vector bundles.  Recall that every invariant
curve on a complete toric variety is isomorphic to $\PP^1$. Every vector bundle on
$\PP^1$ splits as a sum of line bundles
$\cO(a_1)\oplus\cdots \oplus\cO(a_r)$, for some integers
$a_1, \ldots, a_r$. Such a vector bundle is nef or ample if and only if all the $a_i$ are nonnegative or positive, respectively.

\begin{theorem}\label{char_nef}
A toric vector bundle on a complete toric variety is nef
or ample if and only if its restriction to every invariant curve is nef or ample,
respectively.
\end{theorem}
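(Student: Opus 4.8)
The plan is to prove only the ``if'' direction, since the restriction of a nef (resp.\ ample) bundle to a closed subvariety is again nef (resp.\ ample). Throughout I would work with the tautological line bundle $\cO_{\PP(\cE)}(1)$ on $\PP(\cE)$, whose nefness (resp.\ ampleness) is by definition equivalent to that of $\cE$, and I would use the $T$-action on $\PP(\cE)$ together with the equivariant projection $\pi\colon\PP(\cE)\to X$ noted above. The relevant invariant curves of $X$ are the finitely many closures of one-dimensional orbits, and over a maximal cone $\sigma$ the fiber $\pi^{-1}(x_\sigma)\cong\PP^{r-1}$ carries the $T$-action coming from the splitting $\cE\vert_{U_\sigma}\cong\cL_{u_1}\oplus\cdots\oplus\cL_{u_r}$; on such a fiber $\cO_{\PP(\cE)}(1)$ restricts to the hyperplane class $\cO_{\PP^{r-1}}(1)$, which is ample.

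For the nef case, recall that $\cO_{\PP(\cE)}(1)$ is nef on the complete variety $\PP(\cE)$ if and only if $\cO_{\PP(\cE)}(1)\cdot\Gamma\ge 0$ for every irreducible curve $\Gamma\subseteq\PP(\cE)$. Given such a $\Gamma$, I would degenerate it inside $\PP(\cE)$ using a one-parameter subgroup $\lambda\colon\GG_m\to T$ chosen generically, i.e.\ in the interior of a maximal cone, and form the flat limit $\Gamma_0=\lim_{s\to 0}\lambda(s)\cdot\Gamma$ as a $1$-cycle. Since the degeneration is flat, intersection numbers are preserved, so $\cO_{\PP(\cE)}(1)\cdot\Gamma=\cO_{\PP(\cE)}(1)\cdot\Gamma_0$. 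The point is that $\Gamma_0$ is a $T$-invariant cycle, so each of its components is a $T$-invariant curve in $\PP(\cE)$: such a component either lies in a fiber $\pi^{-1}(x_\sigma)$, where $\cO_{\PP(\cE)}(1)$ is the positive hyperplane class, or maps onto an invariant curve $C\subseteq X$, in which case it is a curve in $\PP(\cE\vert_C)$ on which $\cO_{\PP(\cE)}(1)=\cO_{\PP(\cE\vert_C)}(1)$ has nonnegative degree because $\cE\vert_C$ is nef by hypothesis. Summing over components gives $\cO_{\PP(\cE)}(1)\cdot\Gamma\ge 0$, so $\cE$ is nef.

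The ample case I would reduce to the nef case via the $\QQ$-twisted bundles. First, $\cE$ is nef by the previous paragraph, and $\det\cE$ has positive degree on every invariant curve (each restriction being a sum of positive integers), so by the toric criterion $\det\cE$ is ample and in particular $X$ is projective; fix an ample $\QQ$-divisor class $h$ on $X$. For each of the finitely many invariant curves $C$, ampleness of $\cE\vert_C$ means the twist $\bigl(\cE\langle-\epsilon\,\pi^*h\rangle\bigr)\vert_C$ remains nef for all sufficiently small $\epsilon>0$; taking $\epsilon$ below the minimum of the finitely many resulting bounds, the degeneration argument of the previous paragraph---which is purely numerical and so applies verbatim to $\QQ$-twists, noting that $\pi^*h$ restricts to $0$ on the fibers $\pi^{-1}(x_\sigma)$---shows that $\cE\langle-\epsilon\,\pi^*h\rangle$ is nef. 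Since $h$ is ample, this exhibits $\cE$ as a nef bundle twisted by an ample class, hence ample.

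The main obstacle is the claim, used in the nef step and reused for ample, that the flat limit $\Gamma_0$ of an arbitrary curve under a generic one-parameter subgroup is a $T$-invariant cycle whose components are among the invariant curves of $\PP(\cE)$ that we control. Establishing this---the flatness of the degeneration, the constancy of the intersection number, and above all the $T$-invariance (not merely $\lambda$-invariance) of the limit, given that $\PP(\cE)$ is not itself toric---is the technical heart of the argument and is where I expect the real work to lie.
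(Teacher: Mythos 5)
Your overall strategy---degenerate an arbitrary curve $\Gamma\subset\PP(\cE)$ to a cycle supported over the invariant points and curves of $X$, use constancy of intersection numbers under the degeneration, then reduce ampleness to nefness by twisting---is the same as the paper's, but the step you yourself flag as the ``technical heart'' is a genuine gap, and with your stated notion of genericity it is actually false. Taking $\lambda$ merely ``in the interior of a maximal cone'' does not make the flat limit $T$-invariant: for $X=\PP^1\times\PP^1$ the diagonal is invariant under $\lambda(s)=(s,s)$, whose generator $(1,1)$ is interior to a maximal cone, so the flat limit of $\lambda(s)\cdot\Gamma$ is $\Gamma$ itself, which is not $T$-invariant and does not lie over any invariant curve (this lifts to $\PP(\cE)$ for, say, $\cE$ trivial of rank $2$); likewise the conic $\{y^2=xz\}\subset\PP^2$ is invariant under $\lambda=(1,2)$. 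Worse, for \emph{any} single fixed $\lambda$ the closure of a $\lambda$-orbit through a general point is a $\lambda$-invariant curve that is not $T$-invariant and is its own flat limit, so no one choice of $\lambda$ works for all $\Gamma$: the genericity must depend on $\Gamma$, and the natural way to make that precise (choose $\lambda$ separating the $T$-weights on a linearization of the component of the Chow variety containing $[\Gamma]$, so that every $\lambda$-fixed point of that component is $T$-fixed) is exactly the nontrivial work your proposal defers. As written, the proof is incomplete at its central step.

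The idea you are missing is the paper's iterated degeneration, which makes the invariance statement soft and requires no genericity at all. Choose a basis $v_1,\ldots,v_n$ of $N$ with corresponding one-parameter subgroups $\gamma_1,\ldots,\gamma_n$; let $C_1$ be the flat limit of $\gamma_1(t)\cdot\Gamma$ as $t\to 0$, and inductively let $C_i$ be the flat limit of $\gamma_i(t)\cdot C_{i-1}$. Each $C_i$ is automatically $\gamma_i$-invariant, and invariance under $\gamma_j$ for $j<i$ persists in the limit because the actions commute: $\gamma_j(t)\cdot C_i=\lim_{s\to 0}\gamma_i(s)\gamma_j(t)\cdot C_{i-1}=C_i$. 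After $n$ steps, $C_n$ is invariant under the subgroup generated by $\gamma_1(\GG_m),\ldots,\gamma_n(\GG_m)$, which is all of $T$ since the $v_i$ form a basis; then your dichotomy (each component lies in a fiber over a fixed point, where $\cO_{\PP(\cE)}(1)$ is the hyperplane class, or in $\PP(\cE\vert_C)$ for an invariant curve $C$) applies verbatim and the nef case closes. Your reduction of ampleness to nefness is fine modulo this gap: it is the $\QQ$-twisted variant of what the paper does (the paper takes $\Sym^m(\cE)\otimes L^{-1}$ with $m>(L\cdot C)$ for all invariant curves $C$, and records the $\QQ$-twisted statement separately in a remark); just note that the twist should be written $\cE\langle-\epsilon h\rangle$ with $h$ a class on $X$, nefness meaning $\cO_{\PP(\cE)}(1)-\epsilon\,\pi^*h$ is nef.
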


\begin{proof}
The restriction of a nef or ample vector bundle to a closed
subvariety is always nef or ample, respectively, so we must show the
converse.  First we consider the nef case. Suppose the restriction
of $\cE$ to every invariant curve is nef, so the degree of
$\cO_{\PP(\cE)}(1)$ is nonnegative on every curve in $\PP(\cE)$ that
lies in the preimage of an invariant curve in $X$. Let $C$ be an
arbitrary curve in $\PP(\cE)$. We must show that the degree of
$\cO_{\PP({\cE})}(1)$ on $C$ is nonnegative.  Let $v_1, \ldots, v_n$
be a basis for $N$, with $\gamma_i$ the one-parameter subgroup
corresponding to $v_i$.  Let $C_1$ be the flat limit of $t \cdot C$
as $t$ goes to zero in $\gamma_1$.  Hence $[C_1]$ is a one-dimensional
cycle in $\PP(\cE)$ that is linearly equivalent to $[C]$, and
$\pi(C_1)$ is invariant under $\gamma_1$.  Now let $C_i$ be the flat
limit of $t\cdot C_{i-1}$ as $t$ goes to zero in $\gamma_i$, for $2
\leq i \leq n$.  Then $[C_i]$ is linearly equivalent to $[C]$, and
$\pi(C_i)$ is invariant under the torus generated by $\gamma_1,
\ldots, \gamma_i$.  In particular, $[C_n]$ is linearly equivalent to
$[C]$ and every component of $C_n$ lies in the preimage of an
invariant curve in $X$.  Therefore the degree of $\cO_{\PP(\cE)}(1)$
on $C_n$, and hence on $C$, is nonnegative, as required.

Suppose now that the restriction of $\cE$ to every invariant curve is
ample. Note first that $X$ is projective. Indeed, the restriction of ${\rm det}(\cE)$
to every invariant curve on $X$ is ample, and since ${\rm det}(\cE)$ has rank one,
we deduce that ${\rm det}(\cE)$ is ample.

Let us fix  an ample line bundle $L$ 
on $X$, and choose an
integer $m$ that is greater than $(L \cdot C)$ for every invariant
curve $C$ in $X$.  The restriction of $\Sym^m(\cE) \otimes L^{-1}$
to each invariant curve is nef, and hence $\Sym^m(\cE) \otimes
L^{-1}$ is nef.  It follows that $\Sym^m(\cE)$ is ample, and hence
$\cE$ is ample as well \cite[Proposition~6.2.11 and
Theorem~6.1.15]{positivity}.
\end{proof}

\begin{remark}\label{general_variety}
Note that if $\cE$ is a vector bundle on an arbitrary complete variety $X$, then $\cE$ is nef
if and only if for every irreducible curve $C\subset X$, the restriction $\cE\vert_C$ is nef
(this simply follows from the fact that every curve in $\PP(\cE)$ is contained in some
$\PP(\cE\vert_C)$). The similar criterion for ampleness fails since there are non-ample line bundles 
that intersect positively every curve (see, for example,  \cite[Chap. I, \S 10]{Ample}).
However, suppose that $X$ is projective, and that we have finitely many curves $C_1,\ldots,C_r$
such that a vector bundle $\cE$ on $X$ is nef if and only if all $\cE\vert_{C_i}$ are nef. In this case,
arguing as in the above proof we see that a vector bundle $\cE$ on $X$ is ample if and only if 
all $\cE\vert_{C_i}$ are ample.
\end{remark}

\begin{remark}\label{non_toric}
The assumption in the theorem that $\cE$ is equivariant  is essential.
To see this, consider vector bundles $\cE$ on $\PP^n$ (see \cite[Section 2.2]{OSS}
for the basic facts that we use). If ${\rm rk}(\cE)=r$, then for every line $\ell$ in $\PP^n$
we have a decomposition 
$$\cE\vert_{\ell}\simeq\cO_{\PP^1}(a_1)\oplus\cdots\oplus\cO_{\PP^1}(a_r),$$
where we assume that the $a_i$ are ordered such that $a_1\geq\ldots\geq a_r$.
We put $\underline{a}_{\ell}=(a_1,\ldots,a_r)$. If we consider on $\ZZ^r$ the lexicographic order,
then the set $U$  of lines given by
$$U=\{\ell \in {\rm Gr}(1,\PP^n)\mid \underline{a}_{\ell}\leq \underline{a}_{\ell'}\,\text{for every}\, \ell'\in {\rm Gr}(1,\PP^n)\}$$
is open in the Grassmannian ${\rm Gr}(1,\PP^n)$. 
The vector bundle $\cE$ is \emph{uniform} if $U={\rm Gr}(1,\PP^n)$.

Suppose now that $\cE$ is a rank two vector bundle on $\PP^2$ that is not uniform (for an explicit example, see  \cite[Theorem 2.2.5]{OSS}). Let $(a_1,a_2)$ be the value of 
$\underline{a}_{\ell}$ for $\ell\in U$. If $\phi$ is a general element in ${\rm Aut}(\PP^n)$, then 
every torus-fixed line is mapped by $\phi$ to an element in $U$. It follows that if $\cE'=\phi^*(\cE)\otimes\cO_{\PP^2}(-a_2)$,
then $\cE'\vert_{\ell}$ is nef for every torus-invariant line $\ell$. On the other hand, if 
$\phi(\ell)\not\in U$, and if $\cE\vert_{\phi(\ell)}\simeq\cO_{\PP^1}(b_1)\oplus
\cO_{\PP^1}(b_2)$, then $b_2<a_2$ (note that $a_1+a_2=b_1+b_2=\deg(\cE)$), hence
$\cE'\vert_{\ell}$ is not nef.
\end{remark}

\begin{remark}\label{rem3}
Recall that a vector bundle $\cE$ is called \emph{big} if the line
bundle $\cO_{\PP(\cE)}(1)$ is big, which means that its Iitaka
dimension is equal to $\dim \PP(\cE)$.  The analogue of
Theorem~\ref{char_nef} does not hold for big vector bundles: there
are toric vector bundles $\cE$ such that the restriction of $\cE$ to
every invariant curve is big, but $\cE$ is not big.  Consider for
example $X=\PP^n$, for $n\geq 2$, and $\cE=T_{\PP^n}(-1)$.  An irreducible
torus-invariant curve in $\PP^n$ is a line. For such a line $\ell$
we have
$$\cE\vert_{\ell}\simeq\cO_{\ell}(1)\oplus\cO_{\ell}^{\oplus (n-1)}.$$
In particular, we see that $\cE\vert_{\ell}$ is big and nef.
However, $\cE$ is not big: the surjection $\cO_{\PP^n}^{\oplus (n+1)}\to T_{\PP^n}(-1)$
in the Euler exact sequence induces an 
embedding of  $\PP(\cE)$ in $\PP^n\times
\PP^n$, such that $\cO_{\PP(\cE)}(1)$ is the restriction of ${\rm
pr}_2^*(\cO_{\PP^n}(1))$. Therefore the Iitaka dimension of
$\cO_{\PP(\cE)}(1)$ is at most $n<\dim\,\PP(\cE)$.
\end{remark}

\begin{remark}\label{rem_nef_cone}
The argument in the proof of Theorem~\ref{char_nef} shows more
generally that a line bundle $L$ on $\PP(\cE)$ is nef if and only if
its restriction to every $\PP(\cE\vert_C)$ is nef, where $C$ is an
invariant curve on $X$. On the other hand, such a curve $C$ is
isomorphic to $\PP^1$, and $\cE\vert_C$ is completely decomposable.
Therefore $\PP(\cE\vert_C)$ has a structure of toric variety of
dimension ${\rm rk}(\cE)$. If we consider the invariant curves in
each such $\PP(\cE\vert_C)$, then we obtain finitely many curves
$R_1,\ldots,R_m$ in $\PP(\cE)$ (each of them isomorphic to $\PP^1$),
that span the Mori cone of $\PP(\cE)$. In particular, the
Mori cone of $\PP(\cE)$ is rational polyhedral.
\end{remark}

\section{$\QQ$-twisted bundles and Seshadri constants} \label{Qsection}

Recall that a $\QQ$-twisted vector bundle $\cE \< \delta \>$ on $X$
consists formally of a vector bundle $\cE$ on $X$ together with a
$\QQ$-line bundle $\delta \in \Pic(X) \otimes \QQ$.  Just as
$\QQ$-divisors simplify many ideas and arguments about positivity of
line bundles, $\QQ$-twisted vector bundles simplify many arguments
about positivity of vector bundles. We refer to
\cite[Section~6.2]{positivity} for details. One says that a
$\QQ$-twisted vector bundle $\cE \< \delta \>$ is nef or ample if
$\cO_{\PP(\cE)}(1) + \pi^* \delta$ is nef or ample, respectively. If
$Y$ is a subvariety of $X$, then the restriction of $\cE\< \delta
\>$ to $Y$ is defined formally as
\[
\cE\< \delta \> |_Y =  \cE|_Y \< \delta|_Y \>.
\]

\begin{remark} \label{Q version}
Since every $\QQ$-divisor is linearly equivalent to a $T$-invariant $\QQ$-divisor,
the proof of Theorem~\ref{char_nef} goes through essentially without change to show
that a $\QQ$-twisted toric vector bundle is nef or ample if and only if its restriction to every invariant curve is nef or ample, respectively.
\end{remark}

Suppose that $X$ is smooth and complete, $\cE$ is nef, and $x$ is a point in $X$.
Let $p\colon \widetilde X \rightarrow X$ be the blowup at $x$, with exceptional divisor $F$.
Recall that the \emph{Seshadri constant} $\epsilon(\cE, x)$ of $\cE$ at $x$ is defined to be the
supremum of the rational numbers $\lambda$ such that $p^*\cE \< -\lambda F \>$ is nef.
The global Seshadri constant $\epsilon(\cE)$ is defined as $\inf_{x\in X}\epsilon(\cE,x)$.
See \cite{Hacon} for background and further details about Seshadri constants of vector bundles.

We now apply Theorem~\ref{char_nef} to describe Seshadri constants
of nef toric vector bundles on smooth toric varieties. We start with
the following general definition. Suppose that $X$ is a complete toric variety, $\cE$ is a toric vector bundle on $X$, and $x\in X$
is a fixed point.  For each invariant curve $C$ passing through $x$, we have a decomposition
\[
\cE|_C \simeq \cO(a_1) \oplus \cdots \oplus \cO(a_r).
\]
We then define
$\tau(\cE,x):=\min\{a_i\}$, where the minimum ranges over all $a_i$,
and over all invariant curves passing through $x$. We also define
$\tau(\cE):=\min_x\tau(\cE,x)$, where the minimum is taken over all
fixed points of $X$. In other words, $\tau(\cE)$ is the minimum of the $a_i$,
where the minimum ranges over all invariant curves in $X$.  Note that Theorem~\ref{char_nef} says that
$\cE$ is nef or ample if and only if $\tau(\cE)$ is nonnegative or strictly positive, respectively.

We now give the following characterization of Seshadri constants of toric
vector bundles at fixed points, generalizing a result of Di Rocco for line bundles \cite{DR}.

\begin{proposition}\label{Seshadri}
Let $X$ be a smooth complete toric variety of dimension $n$, and $\cE$ a nef toric vector bundle on $X$.
If $x\in X$ is a torus-fixed point, then $\epsilon(\cE,x)$ is equal to $\tau(\cE,x)$.
\end{proposition}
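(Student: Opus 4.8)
The plan is to work on the blowup $p\colon \widetilde X \to X$ at the fixed point $x$, and to use Theorem~\ref{char_nef} (in its $\QQ$-twisted form, Remark~\ref{Q version}) to reduce the computation of $\epsilon(\cE,x)$ to an examination of the restrictions of $p^*\cE\<-\lambda F\>$ to the invariant curves on $\widetilde X$. Since $x$ is a torus-fixed point and $X$ is smooth, $\widetilde X$ is again a smooth complete toric variety, and its invariant curves fall into two classes: those lying in the exceptional divisor $F\cong\PP^{n-1}$, and the strict transforms $\widetilde C$ of the invariant curves $C\subset X$ passing through $x$ (together with invariant curves not meeting $x$, on which the twist by $-\lambda F$ has no effect).

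First I would observe that $p^*\cE\<-\lambda F\>$ is nef if and only if its restriction to every invariant curve on $\widetilde X$ is nef, by Remark~\ref{Q version}. On an invariant curve disjoint from $F$, the restriction is just $\cE|_C$, which is nef by hypothesis, so these impose no constraint. On an invariant curve $R$ contained in $F$, note that $p^*\cE|_F$ is trivial (it is pulled back from the point $x$), so $p^*\cE\<-\lambda F\>|_R \simeq \cO_{\PP^1}^{\oplus r}\<-\lambda F|_R\>$; since $F|_F = \cO_{\PP^{n-1}}(-1)$, the degree of $-\lambda F$ on such a line is $\lambda$, and nefness on these curves holds exactly when $\lambda \geq 0$. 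The essential constraint therefore comes from the strict transforms $\widetilde C$ of the invariant curves $C$ through $x$. Here I would compute, for each such $C$ with $\cE|_C\simeq\bigoplus_i\cO(a_i)$, the restriction $p^*\cE\<-\lambda F\>|_{\widetilde C}$. Since $\widetilde C\to C$ is an isomorphism and $\widetilde C\cdot F = 1$, pulling back gives $\bigoplus_i \cO_{\PP^1}(a_i)$ twisted down by $\lambda$, so this restriction is $\bigoplus_i\cO_{\PP^1}(a_i)\<-\lambda F\>$, which is nef precisely when $a_i - \lambda \geq 0$ for all $i$, i.e. when $\lambda \leq \min_i a_i$.

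Assembling these conditions, $p^*\cE\<-\lambda F\>$ is nef if and only if $0\le \lambda \le a_i$ for every $a_i$ occurring in the decomposition of $\cE|_C$ over all invariant curves $C$ through $x$; the supremum of such $\lambda$ is exactly $\tau(\cE,x)=\min\{a_i\}$. This yields $\epsilon(\cE,x)=\tau(\cE,x)$.

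The step requiring the most care is the identification of the restriction $p^*\cE\<-\lambda F\>|_{\widetilde C}$ with $\bigoplus_i\cO_{\PP^1}(a_i)\<-\lambda F\>$, and in particular the intersection number $\widetilde C\cdot F = 1$. This uses that $x$ is a smooth torus-fixed point, so that the invariant curves through $x$ correspond to the rays of the star of the maximal cone $\sigma$ at $x$, and each strict transform meets the exceptional divisor transversally in a single point; one must verify that the equivariant structure is compatible so that $p^*\cE|_{\widetilde C}$ really is the pullback of $\cE|_C$ with no extra twisting beyond the contribution of $-\lambda F$. I would also need to confirm that these strict transforms, together with the lines in $F$ and the invariant curves away from $x$, exhaust the invariant curves of $\widetilde X$, so that Theorem~\ref{char_nef} applies with no missing constraints.
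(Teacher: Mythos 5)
Your proposal is correct and follows essentially the same route as the paper's proof: blow up at $x$, apply the $\QQ$-twisted version of Theorem~\ref{char_nef} (Remark~\ref{Q version}), and check nefness of $p^*\cE\<-\lambda F\>$ on the three types of invariant curves of $\widetilde X$ (curves in $F$, strict transforms of curves through $x$ using $(F\cdot\widetilde C)=1$, and curves missing $x$). The details you flag for care, such as the transverse intersection of $\widetilde C$ with $F$ and the identification $p^*\cE\<-\lambda F\>|_{\widetilde C}\simeq \cE|_C\<-\lambda H\>$, are exactly the facts the paper's proof invokes.
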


\begin{proof}
Let $\lambda$ be a nonnegative rational number, and let $p\colon\widetilde X \rightarrow X$ be the blowup at a $T$-fixed point $x$, with exceptional divisor $F$.  Then $\widetilde X$ is a toric variety and $p$ is an equivariant morphism, so $p^* \cE \< -\lambda F \>$ is nef if and only if its restriction to every invariant curve is nef.

Let $\widetilde C$ be an invariant curve in $\widetilde X$.  If $\widetilde C$ is contained in $F$, then the restriction of $p^* \cE \< -\lambda F \>$ to 
$\widetilde{C}$ is isomorphic to a direct sum of copies of $\cO_{\PP^1}\< \lambda H\>$, where $H$ is the hyperplane class on $\PP^1$ (note that $\cO(-F)\vert_F$ is isomorphic to $\cO_{\PP^{n-1}}(1)$).

If $\widetilde C$ is not contained in the exceptional divisor, then $p$ maps $\widetilde C$ isomorphically onto an invariant curve $C$ in $X$.  If $C$ does not contain $x$ then the restriction of $p^* \cE \< -\lambda F \>$ to $\widetilde C$ is isomorphic to $\cE\vert_C$, which is nef.  On the other hand, if $x \in C$ then $(F \cdot \widetilde C) = 1$.  Then the restriction of $p^* \cE \< -\lambda F \>$ to $\widetilde C$ is isomorphic to $\cE\vert_C \< -\lambda H \>$.  Therefore, if the restriction of $\cE$ to $C$ is isomorphic to $\cO(a_1) \oplus \cdots\oplus\cO(a_r)$, then the restriction of $p^* \cE \< -\lambda F \>$ to
$\widetilde{C}$ is nef if and only if $\lambda \leq a_i$ for all $i$.  By Theorem~\ref{char_nef} for $\QQ$-twisted bundles (see Remark~\ref{Q version} above), it follows that $\epsilon(\cE,x) = \tau(\cE,x)$, as claimed.
\end{proof}

\begin{corollary}\label{global_Seshadri}
Under the assumptions in the proposition, the global Seshadri
constant $\epsilon(\cE)$ is equal to $\tau(\cE)$.
\end{corollary}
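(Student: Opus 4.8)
The plan is to prove the two inequalities $\epsilon(\cE) \le \tau(\cE)$ and $\epsilon(\cE) \ge \tau(\cE)$ separately. The first is immediate from the definitions together with Proposition~\ref{Seshadri}: since $\epsilon(\cE) = \inf_{x \in X}\epsilon(\cE, x)$ is an infimum over \emph{all} points of $X$, it is bounded above by the minimum taken over the torus-fixed points alone, and at each fixed point $x$ the proposition gives $\epsilon(\cE, x) = \tau(\cE, x)$; minimizing over fixed points yields $\epsilon(\cE) \le \tau(\cE)$.

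For the reverse inequality it suffices to show $\epsilon(\cE, x) \ge \tau(\cE)$ for every (not necessarily fixed) point $x \in X$. Two ingredients drive this. First, because $\cE$ is equivariant, for every $t \in T$ the translation $\mu_t\colon X \to X$ satisfies $\mu_t^*\cE \cong \cE$, and $\mu_t$ lifts to an isomorphism of blowups carrying $p^*\cE\<-\lambda F\>$ at $x$ to the corresponding $\QQ$-twisted bundle at $tx$; hence $x \mapsto \epsilon(\cE, x)$ is constant on $T$-orbits. Second, the local Seshadri constant is \emph{lower semicontinuous} as a function of the point, so it can only drop under specialization.

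With these in hand I would degenerate $x$ to a fixed point exactly as in the proof of Theorem~\ref{char_nef}. Fix a basis $v_1, \ldots, v_n$ of $N$ with associated one-parameter subgroups $\gamma_1, \ldots, \gamma_n$, set $x_0 = x$, and let $x_i = \lim_{t \to 0}\gamma_i(t)\cdot x_{i-1}$, which exists since $X$ is complete. As $T$ is abelian, each fixed locus $X^{\gamma_j}$ is closed and $\gamma_i$-invariant, so one checks inductively that $x_i$ is fixed by $\gamma_1, \ldots, \gamma_i$; thus $x_n$ is a torus-fixed point. For each $i$ the points $\gamma_i(t)\cdot x_{i-1}$ lie in the $T$-orbit of $x_{i-1}$, so orbit-invariance gives $\epsilon(\cE, \gamma_i(t)\cdot x_{i-1}) = \epsilon(\cE, x_{i-1})$, while lower semicontinuity applied to the specialization $\gamma_i(t)\cdot x_{i-1} \to x_i$ yields $\epsilon(\cE, x_i) \le \epsilon(\cE, x_{i-1})$. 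Chaining these inequalities produces $\epsilon(\cE, x) \ge \epsilon(\cE, x_n) = \tau(\cE, x_n) \ge \tau(\cE)$, where the middle equality is Proposition~\ref{Seshadri} at the fixed point $x_n$. Taking the infimum over $x$ then gives $\epsilon(\cE) \ge \tau(\cE)$.

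The main obstacle is the lower semicontinuity of $x \mapsto \epsilon(\cE, x)$: nefness does not in general specialize along a family, since curves in the special fiber of the family of blowups need not deform, so this is a genuine input rather than a formality. I would invoke the semicontinuity theorem for Seshadri constants of vector bundles from the literature (for instance \cite{Hacon}) rather than reprove it. Everything else is bookkeeping with the toric orbit structure, modeled closely on the degeneration argument already used for Theorem~\ref{char_nef}.
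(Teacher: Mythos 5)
Your overall route is the same as the paper's: both arguments reduce the corollary to Proposition~\ref{Seshadri} at fixed points by combining (a) constancy of $\epsilon(\cE,\cdot)$ on $T$-orbits with (b) the fact that the local Seshadri constant cannot increase when a point specializes into a smaller orbit, and your one-parameter-subgroup chain is just an explicit way of walking down the orbit stratification, which the paper phrases in terms of orbit closures $O_\sigma \subset \overline{O_\tau}$.

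The one place you diverge---and the only delicate point---is the justification of step (b), and here your proposal has a genuine weakness. You plan to quote a ``semicontinuity theorem for Seshadri constants of vector bundles'' from the literature, for instance \cite{Hacon}; but no such Zariski lower semicontinuity theorem is available there (this paper cites \cite{Hacon} only for background on definitions), and honest Zariski lower semicontinuity of $x \mapsto \epsilon(\cE,x)$ is not a standard fact even for line bundles. What is standard is strictly weaker: in a family of ($\QQ$-twisted) bundles, the non-nef members are parametrized by at most a \emph{countable union} of closed subvarieties \cite[Proposition~1.4.14]{positivity}. That countable version is exactly the input the paper uses, and it does suffice here, but only because of the orbit-constancy you have already established. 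Concretely: if $\epsilon(\cE,x_i) > \epsilon(\cE,x_{i-1})$, pick a rational $\lambda$ strictly between the two values; in the family of blow-ups parametrized by the closure of the curve $\{\gamma_i(t)\cdot x_{i-1}\}$ (or by $\overline{T\cdot x_{i-1}}$), the locus where $p^*\cE\<-\lambda F\>$ fails to be nef contains all points of the dense orbit piece but misses $x_i$. A countable union of closed subvarieties of an irreducible variety that covers a dense open subset must contain the whole variety (after base change to an uncountable algebraically closed field, which changes neither nefness nor the hypotheses), a contradiction. So the correct repair is to replace your black-box citation by Lazarsfeld's countability statement plus orbit-constancy; with that substitution your argument is complete and coincides with the paper's proof.
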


\begin{proof}
It is enough to show that the minimum of the local Seshadri constants $\epsilon (\cE, x)$ occurs at a fixed point $x \in X$.  Now, since $\cE$ is equivariant, $\epsilon(\cE, x)$ is constant on each $T$-orbit in $X$.  It then follows from the fact that the set of non-nef bundles in a family is parametrized by at most a countable union of closed subvarieties \cite[Proposition~1.4.14]{positivity} that if a torus orbit
 $O_\sigma$ is contained in the closure of an orbit $O_\tau$, then the local Seshadri constants of $\cE$ at points in $O_\sigma$ are less than or equal to those at points in $O_\tau$.  Therefore, the minimal local Seshadri constant must occur along a minimal orbit, which is a fixed point.
\end{proof}

For the following two corollaries, let $X$ be a smooth complete toric variety, with $\cE$ a toric vector bundle on $X$.

\begin{corollary}\label{cor1}
Let $p\colon \widetilde X \rightarrow X$ be the blowup of $X$ at a fixed point, with exceptional divisor $F$.  For every integer $m \geq 0$, we have $\tau(p^* \cE \otimes \cO(-mF))\geq
\min\{m,\tau(\cE) - m\}$.  In particular, if $\cE$ is ample then $p^* \cE \otimes \cO(-F)$ is nef.
\end{corollary}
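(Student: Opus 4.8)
The plan is to read off $\tau(p^*\cE\otimes\cO(-mF))$ directly from the restrictions of $p^*\cE\otimes\cO(-mF)$ to the invariant curves of $\widetilde X$, exactly as in the proof of Proposition~\ref{Seshadri}. Since $x$ is a fixed point, $p$ is an equivariant morphism, so $p^*\cE$ is again a toric vector bundle and $\cO(-mF)$ an equivariant line bundle; hence $p^*\cE\otimes\cO(-mF)$ is a toric vector bundle and $\tau$ is defined for it. By definition $\tau(p^*\cE\otimes\cO(-mF))$ is the minimum of all the integers occurring in the splitting type of this bundle along an invariant curve $\widetilde C\subset\widetilde X$, so it suffices to bound each such integer below by $\min\{m,\tau(\cE)-m\}$.

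First I would partition the invariant curves $\widetilde C$ of $\widetilde X$ into the three types appearing in the proof of Proposition~\ref{Seshadri}. If $\widetilde C\subset F$, then $p(\widetilde C)=\{x\}$, so $p^*\cE|_{\widetilde C}$ is trivial; since $\cO(-F)|_F\simeq\cO_{\PP^{n-1}}(1)$ we get $(p^*\cE\otimes\cO(-mF))|_{\widetilde C}\simeq\cO_{\PP^1}(m)^{\oplus r}$, contributing only the integer $m$. If $\widetilde C\not\subset F$, then $p$ maps $\widetilde C$ isomorphically onto an invariant curve $C$ in $X$, and there are two subcases. When $x\notin C$ we have $(F\cdot\widetilde C)=0$ and the restriction is isomorphic to $\cE|_C$, whose integers are all $\geq\tau(\cE)$. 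When $x\in C$ we have $(F\cdot\widetilde C)=1$, so the restriction is $\cE|_C\otimes\cO(-m)\simeq\bigoplus_i\cO(a_i-m)$, where $\cE|_C\simeq\bigoplus_i\cO(a_i)$; these integers are all $\geq\tau(\cE)-m$.

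Taking the minimum over all invariant curves, every integer that occurs is either equal to $m$, or is $\geq\tau(\cE)$, or is $\geq\tau(\cE)-m$; since $m\geq 0$ forces $\tau(\cE)\geq\tau(\cE)-m$, the overall estimate is $\tau(p^*\cE\otimes\cO(-mF))\geq\min\{m,\tau(\cE)-m\}$, as desired. For the final assertion, if $\cE$ is ample then Theorem~\ref{char_nef}, in the form ``$\cE$ is ample if and only if $\tau(\cE)>0$'', gives $\tau(\cE)\geq 1$. Taking $m=1$ yields $\tau(p^*\cE\otimes\cO(-F))\geq\min\{1,\tau(\cE)-1\}\geq 0$, so $p^*\cE\otimes\cO(-F)$ is nef by the same theorem.

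I expect no serious obstacle here: the only point requiring care is the complete and correct enumeration of the invariant curves of the blowup together with the intersection numbers $(F\cdot\widetilde C)$, which determine the twist by $\cO(-mF)$ along each curve. This is precisely the case analysis already carried out for Proposition~\ref{Seshadri}, now with an integral twist in place of a $\QQ$-twist, so the argument transfers verbatim.
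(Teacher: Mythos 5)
Your proof is correct and takes exactly the approach the paper intends: the paper's own proof of Corollary~\ref{cor1} is literally the single line ``Similar to the proof of Proposition~\ref{Seshadri},'' and your three-way case analysis of invariant curves in $\widetilde X$ (curves in $F$, curves missing $x$, strict transforms of curves through $x$ with $(F\cdot\widetilde C)=1$), followed by the application of Theorem~\ref{char_nef} via $\tau$, is precisely that argument carried out with the integral twist $\cO(-mF)$ in place of the $\QQ$-twist.
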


\begin{proof}
Similar to the proof of Proposition~\ref{Seshadri}.
\end{proof}

\begin{corollary} \label{cor1'}
Let $q\colon X' \rightarrow X$ be 
the blowup of $X$ at $k$ distinct fixed points, with exceptional divisors $F_1, \ldots, F_k$.  If the Seshadri constant $\epsilon( \cE)$ is greater than or equal to two, then $q^* \cE \otimes \cO(-F_1 - \cdots - F_k)$ is nef.
\end{corollary}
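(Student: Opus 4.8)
The plan is to mimic the proof of Proposition~\ref{Seshadri}, reducing nefness to a check on invariant curves. Since the $k$ fixed points are distinct, $X'$ is again a smooth complete toric variety, $q$ is an equivariant morphism, and $q^*\cE\otimes\cO(-F_1-\cdots-F_k)$ is a toric vector bundle (the $F_i$ are invariant divisors). By Theorem~\ref{char_nef} it therefore suffices to show that the restriction of $q^*\cE\otimes\cO(-F_1-\cdots-F_k)$ to every invariant curve $\widetilde C\subset X'$ is nef. I would first record the translation of the hypothesis: by Corollary~\ref{global_Seshadri} the assumption $\epsilon(\cE)\geq 2$ is equivalent to $\tau(\cE)\geq 2$, which means that for every invariant curve $C$ in $X$, writing $\cE|_C\simeq\cO(a_1)\oplus\cdots\oplus\cO(a_r)$, one has $a_i\geq 2$ for all $i$.

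Next I would classify the invariant curves of $X'$. The exceptional divisors $F_1,\ldots,F_k$ are pairwise disjoint, each isomorphic to $\PP^{n-1}$, with $\cO(-F_j)|_{F_j}\cong\cO_{\PP^{n-1}}(1)$. If $\widetilde C$ lies in some $F_j$, then it is disjoint from the remaining $F_i$, and $q^*\cE|_{\widetilde C}$ is trivial since it is pulled back from the point $q(\widetilde C)$; hence the restriction of $q^*\cE\otimes\cO(-F_1-\cdots-F_k)$ to $\widetilde C$ is a direct sum of copies of $\cO_{\PP^1}(1)$, which is nef. Otherwise $q$ maps $\widetilde C$ isomorphically onto an invariant curve $C$ in $X$, and $q^*\cE|_{\widetilde C}\cong\cE|_C\simeq\bigoplus_i\cO(a_i)$.

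The key remaining point, and essentially the only place any work is required, is to compute $(F_1+\cdots+F_k)\cdot\widetilde C$ in this second case. Since $C\cong\PP^1$ has exactly two torus-fixed points, at most two of the blown-up points lie on $C$; for each such point $x_i\in C$ the proper transform meets $F_i$ transversally in a single point, so $(F_i\cdot\widetilde C)=1$, while the other $F_i$ are disjoint from $\widetilde C$. Thus the total twist has degree $-t$ on $\widetilde C$ with $t\in\{0,1,2\}$, and the restriction of $q^*\cE\otimes\cO(-F_1-\cdots-F_k)$ to $\widetilde C$ is $\bigoplus_i\cO(a_i-t)$. Because $t\leq 2\leq a_i$, every summand has nonnegative degree, so the restriction is nef. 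This is exactly where the hypothesis $\epsilon(\cE)\geq 2$ is needed: it is the bound required to handle the curves $C$ joining two of the blown-up points, the main obstacle in the argument. With nefness verified in every case, Theorem~\ref{char_nef} yields that $q^*\cE\otimes\cO(-F_1-\cdots-F_k)$ is nef.
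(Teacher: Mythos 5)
Your proposal is correct and takes essentially the same approach as the paper: the paper's proof is the one-line remark that the argument is ``similar to the proof of Proposition~\ref{Seshadri}, using the fact that an invariant curve $C$ in $X$ contains at most two fixed points,'' and your case analysis (curves inside an exceptional divisor, proper transforms avoiding the blown-up points, and proper transforms through one or two of them) is precisely that argument spelled out. The translation $\epsilon(\cE)\geq 2\Leftrightarrow\tau(\cE)\geq 2$ via Corollary~\ref{global_Seshadri} and the final appeal to Theorem~\ref{char_nef} match the paper's intended reasoning exactly.
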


\begin{proof}
Similar to the proof of Proposition~\ref{Seshadri}, using the fact that an invariant curve $C$ in $X$ contains at most two fixed points.
\end{proof}

As mentioned above, the set
of non-nef bundles in a family is at most a countable union of Zariski closed
subsets. However, in the toric
case we deduce from Theorem~\ref{char_nef} that the set of non-nef bundles is closed.

\begin{corollary}\label{cor11}
Let $S$ be a variety, and let $\cE$ be a vector bundle on $X \times S$ such that, for every point $s \in S$, the restriction $\cE_s$ of $\cE$ to $X \times \{s \}$ admits the structure of a toric vector bundle.  Then the set of points $s$ such that $\cE_s$ is  nef is open in $S$.
\end{corollary}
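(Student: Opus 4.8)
The plan is to reduce, via Theorem~\ref{char_nef}, the nefness of each $\cE_s$ to a \emph{finite} collection of conditions, one for each invariant curve of $X$, and then to recognize each such condition as the vanishing of a cohomology group, which is an open condition by semicontinuity. The essential point is that the general result quoted above only exhibits the non-nef locus as a countable union of closed sets, so that a priori the nef locus is merely a countable intersection of open sets; the extra input of Theorem~\ref{char_nef} replaces this countable family by a finite one, and it is precisely this finiteness that makes the nef locus open.

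First I would recall that the complete toric variety $X$ has only finitely many invariant curves $C_1,\ldots,C_N$, each isomorphic to $\PP^1$, and that these depend only on the fan $\Delta$ and not on any equivariant bundle. Since each $\cE_s$ is by hypothesis a toric vector bundle on $X$, Theorem~\ref{char_nef} gives that $\cE_s$ is nef if and only if $\cE_s|_{C_j}$ is nef for every $j$. (Here the equivariant structures may vary with $s$; this is harmless, as nefness and restriction to $C_j$ depend only on the underlying bundle, and we invoke the equivariant structure only to apply Theorem~\ref{char_nef} fiberwise.) Hence the nef locus in $S$ is the finite intersection $\bigcap_{j=1}^N U_j$, where $U_j=\{s\in S:\cE_s|_{C_j}\text{ is nef}\}$, and it suffices to prove that each $U_j$ is open.

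Next I would reformulate the nefness condition on $\PP^1$ cohomologically. For a vector bundle $F\simeq\bigoplus_i\cO_{\PP^1}(a_i)$ on $\PP^1$ one has $H^1(\PP^1,F\otimes\cO_{\PP^1}(-1))=\bigoplus_i H^1(\PP^1,\cO_{\PP^1}(a_i-1))$, which vanishes precisely when every $a_i\geq 0$, that is, precisely when $F$ is nef. Applying this fiberwise, $s\in U_j$ if and only if $H^1\bigl(C_j,\cE_s|_{C_j}\otimes\cO_{\PP^1}(-1)\bigr)=0$. I would then consider the projection $\pi_j\colon C_j\times S\to S$ together with the coherent sheaf $\cF_j:=(\cE|_{C_j\times S})\otimes{\rm pr}_1^*\cO_{\PP^1}(-1)$, the pullback being under the first projection $C_j\times S\to C_j\simeq\PP^1$. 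This $\cF_j$ is locally free on $C_j\times S\simeq\PP^1\times S$ and therefore flat over $S$, and its fiber over $s$ is exactly $\cE_s|_{C_j}\otimes\cO_{\PP^1}(-1)$. Since $\pi_j$ is projective, the function $s\mapsto\dim_{k(s)}H^1\bigl((C_j)_s,(\cF_j)_s\bigr)$ is upper semicontinuous, so its zero locus $U_j$ is open in $S$.

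Finally, $\bigcap_{j=1}^N U_j$ is a finite intersection of open sets and hence open, which is the claim. The only steps requiring genuine care, and the conceptual heart of the argument, are the cohomological reformulation of nefness on $\PP^1$ and the verification that $\cF_j$ is flat over $S$ so that the semicontinuity theorem applies; everything else is formal once Theorem~\ref{char_nef} has cut the problem down to finitely many curves.
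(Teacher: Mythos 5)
Your proof is correct and takes essentially the same route as the paper: reduce via Theorem~\ref{char_nef} to the finitely many invariant curves (equivalently, to $X=\PP^1$), then express nefness of a bundle on $\PP^1$ as a cohomology-vanishing condition, which is open by semicontinuity in a flat family. The only cosmetic difference is that you use the vanishing of $H^1\bigl(\PP^1,\cF\otimes\cO_{\PP^1}(-1)\bigr)$ while the paper uses $h^0\bigl(\PP^1,\cF^\vee(-1)\bigr)=0$; these two criteria are exchanged by Serre duality.
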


\begin{proof}
By Theorem~\ref{char_nef}, it is enough to prove the
corollary when $X=\PP^1$. In this case the assertion is clear, since
a vector bundle $\cF \simeq \cO(a_1) \oplus \cdots \oplus \cO(a_r)$
on $\PP^1$ is nef if and only if $h^0(\PP^1,\cF^\vee(-1))=0$.
\end{proof}

\section{The bundles $\cM_L$} \label{M_L}

Vector bundles appear naturally in the study of
 linear series on a projective variety. For example, suppose that $L$ is a globally
generated line bundle on $X$. The kernel $\cM_L$ of the
evaluation map $H^0(X,L)\otimes\cO_X\to L$ is a vector bundle 
whose behavior is closely 
related to the geometry of $L$.
 If $L$ is very ample,
then the projective normality of $X$ in the embedding given by $L$,
or the minimal degree in the syzygies
 of the ideal of $X$ are governed by properties of $\cM_L$
(see \cite{Gre1} and \cite{Gre2}). Note that if $X$ is a toric variety, 
and if we fix an equivariant structure on $L$,
then  $\cM_{L}$  becomes an equivariant
vector bundle. 

Recall the following well-known question about linear series on
smooth toric varieties. It would be quite interesting to understand what conditions would guarantee a
positive answer. This question motivates our study of the
vector bundles $\cM_L$.

\begin{question}\label{q1}${\rm (}$\cite{Oda1}${\rm )}$
If $L_1$ and $L_2$ are ample line bundles on a smooth projective
toric variety $X$, is the multiplication map
\begin{equation}\label{mult}
H^0(X,L_1)\otimes H^0(X,L_2)\to H^0(X,L_1\otimes L_2)
\end{equation}
 surjective ?
\end{question}

Fakhruddin \cite{Fakhruddin} proved that this question has a positive 
answer for an ample line bundle $L_1$ and a globally 
generated line bundle $L_2$  on a smooth toric surface. Recently,
Haase, Nill, Pfaffenholz and Santos \cite{HNPS}
 were able to prove this 
for arbitrary toric surfaces. Moreover it is well known 
\cite{EwaldWessels, LiuTrotterZiegler, BrunsGubeladzeTrung} 
that for an ample line bundle $L$ on a 
possibly singular toric variety of dimension $n$,
the multiplication map 
\begin{equation}
H^0(X,L^m)\otimes H^0(X,L)\to H^0(X,L^{m+1})
\end{equation}
is surjective for $m\geq n-1$.

\begin{remark}\label{reformulation_Minkowski}
Question~\ref{q1} can be restated in terms of Minkowski sums of polytopes, as follows.
Recall that an ample divisor $D$ on a toric variety corresponds 
to a lattice polytope $P\subset M_{\RR}$.  
Let $P_1$, $P_2$ be the lattice polytopes in $M_{\RR}$ 
corresponding to ample divisors $D_1$ and $D_2$ on $X$. Question \ref{q1}
is equivalent to the question whether the natural addition map
\begin{equation}\label{eq:polytopes}
(P_1\cap M) \times (P_2\cap M) \to (P_1+P_2)\cap M
\end{equation}
 is surjective. 
\end{remark}

\begin{remark}\label{rem4}
Question~\ref{q1} has a positive answer in general if and only if it has a
positive answer whenever $L_1=L_2$. Indeed, given two line bundles
$L_1$ and $L_2$ on the smooth toric variety $X$ we may consider the
toric variety $Y=\PP(L_1\oplus L_2)$ (note that $Y$ is smooth, too).
Since $\pi_*\cO_Y(m)={\rm Sym}^m(L_1\oplus L_2)$, it follows that if
$$H^0(Y,\cO_Y(1))\otimes H^0(Y,\cO_Y(1))\to H^0(Y,\cO_Y(2))$$
is surjective, then (\ref{mult}) is surjective, too.
\end{remark}

\begin{remark}
The argument in the previous remark can be restated combinatorially, as follows.
Let $D_1$ and $D_2$ be ample $T$-Cartier divisors, with 
$P_1$ and $P_2$ the corresponding lattice polytopes. 
Let $Q \subset M_{\RR} \times \RR$ be the Cayley sum of $P_1$ and $P_2$, which is the
convex hull of $(P_1 \times \{ 0 \}) \cup (P_2 \times \{1\})$. 
If the addition map $(Q\cap M) \times (Q\cap M) \to 2Q\cap M$ is 
surjective, then so is the map (\ref{eq:polytopes}) above. Note that 
$Q$ is the polytope corresponding to the line bundle $\cO_Y(1)$ as in 
the previous remark, and if $X$ is smooth then the toric variety corresponding to 
$Q$ is smooth.  
\end{remark}  

\bigskip

We now turn to the study of the vector bundles $\cM_L$.
Let $X$ be a complete toric variety, and $L$ a globally generated
line bundle on $X$. Let $\cM_L$ be the kernel of the evaluation map
associated to $L$:
$$\cM_L:=\ker\left(H^0(X,L)\otimes\cO_X\to L\right).$$
Since $L$ is globally generated, $\cM_L$ is a vector bundle of rank
$h^0(L)-1$. By definition, we have an exact sequence
\begin{equation}\label{sequence_ML}
0\to \cM_L\to H^0(X,L)\otimes\cO_X\to L\to 0.
\end{equation}

If $L'$ is another globally generated line bundle, then $H^i(X,L')$ vanishes for $i$ greater than zero.  Therefore, by tensoring with
$L'$ the exact sequence (\ref{sequence_ML}),
we see that the multiplication map $H^0(X,L)\otimes H^0(X,L')\to
H^0(X,L\otimes L')$ is surjective if and only if $H^1(X,\cM_L\otimes
L')=0$. We also  get from this exact sequence that in general
$H^i(X,\cM_L\otimes L')=0$ for every $i\geq 2$.

\begin{remark}
Suppose that $P_1$ and $P_2$ are the lattice polytopes in $M_{\RR}$ 
corresponding to the ample divisors $D_1$ and $D_2$ on $X$, as in
Remark~\ref{reformulation_Minkowski}. If $L_1=\cO(D_1)$ and $L_2=\cO(D_2)$, then
the points $w$ in $(P_1+P_2)\cap M$ that are not sums of lattice points 
of $P_1$ and $P_2$ correspond exactly to the torus weights $w$ for
which $H^1(X,\cM_{L_1}\otimes L_2)_w \neq 0$.
\end{remark}

\begin{proposition}\label{prop1}
If $L$ and $L'$ are line bundles on $X$, with $L$ ample, then
 for every fixed point $x \in X$ we have
\[
\tau(\cM_L \otimes L', x) = \tau(L',x) - 1.
\]
In particular,  $\cM_L \otimes L'$ is nef if and only if $L'$ is ample, and 
 $\cM_L \otimes L'$ is ample if and only if
$(L' \cdot C) \geq 2$ for every invariant curve $C$.
\end{proposition}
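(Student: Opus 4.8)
The plan is to reduce everything to the restriction of $\cM_L\otimes L'$ to the invariant curves through $x$, compute these restrictions explicitly, and then feed the result into Theorem~\ref{char_nef}. Since $\tau(\cM_L\otimes L',x)$ is by definition the minimum of the splitting degrees of $(\cM_L\otimes L')|_C$ over all invariant curves $C$ containing $x$, and since $C\simeq\PP^1$, the whole problem reduces to understanding $\cM_L|_C$ for a single such curve.

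First I would restrict the defining sequence (\ref{sequence_ML}) to $C$. As $L$ is locally free, the sequence stays exact after restriction, giving
\[
0\to \cM_L|_C\to H^0(X,L)\otimes\cO_C\to L|_C\to 0,
\]
so $\cM_L|_C$ is the kernel of the evaluation of the restricted sections of $L$ along $C$. Writing $d=(L\cdot C)\ge 1$, so that $L|_C\simeq\cO_{\PP^1}(d)$, the crucial point is that the restriction map $H^0(X,L)\to H^0(C,L|_C)$ is \emph{surjective}. This is where I expect the real work to be, and it is exactly where ampleness of $L$ enters: writing $L=\cO(D)$ with lattice polytope $P$, the curve $C$ is dual to an edge $e$ of $P$, and the $d+1=h^0(C,L|_C)$ lattice points of $e$ restrict to sections of $\cO_{\PP^1}(d)$ carrying the $d+1$ distinct torus weights, hence to a basis. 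Granting this, the trivial subbundle $\ker\!\big(H^0(X,L)\to H^0(C,L|_C)\big)\otimes\cO_C$ splits off, leaving the kernel of the \emph{full} evaluation map on $\PP^1$; since that kernel has vanishing $H^0$ and $H^1$, it must be $\cO_{\PP^1}(-1)^{\oplus d}$. Therefore
\[
\cM_L|_C\simeq \cO_{\PP^1}^{\oplus a}\oplus\cO_{\PP^1}(-1)^{\oplus b},\qquad b=(L\cdot C)\ge 1,\quad a=h^0(L)-1-(L\cdot C)\ge 0.
\]

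Next I would tensor by $L'$. Since $L'|_C\simeq\cO_{\PP^1}(e)$ with $e=(L'\cdot C)$, the splitting above gives $(\cM_L\otimes L')|_C\simeq \cO_{\PP^1}(e)^{\oplus a}\oplus\cO_{\PP^1}(e-1)^{\oplus b}$. Because $L$ is ample we have $b\ge 1$, so a summand of degree $e-1$ always occurs, and the minimal splitting degree on $C$ is $(L'\cdot C)-1$. Taking the minimum over all invariant curves through $x$, and using that $\tau(L',x)=\min_{C\ni x}(L'\cdot C)$ (as $L'$ has rank one), yields the asserted identity $\tau(\cM_L\otimes L',x)=\tau(L',x)-1$.

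Finally, for the two positivity statements I would minimize the identity over all fixed points to get $\tau(\cM_L\otimes L')=\tau(L')-1$, an integer, and then apply Theorem~\ref{char_nef} in the form ``nef $\Leftrightarrow \tau\ge 0$, ample $\Leftrightarrow \tau>0$.'' Thus $\cM_L\otimes L'$ is nef iff $\tau(L')\ge 1$, i.e. iff $(L'\cdot C)\ge 1$ for every invariant curve, which is precisely ampleness of $L'$; and it is ample iff $\tau(L')\ge 2$, i.e. iff $(L'\cdot C)\ge 2$ for every invariant curve. The only genuinely delicate step is the surjectivity of the restriction of global sections in the second paragraph; everything after it is bookkeeping with splittings on $\PP^1$ together with the already-established Theorem~\ref{char_nef}.
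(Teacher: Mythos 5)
Your proof is correct, and its overall skeleton matches the paper's: establish that $\cM_L\vert_C \simeq \cO_{\PP^1}^{\oplus a}\oplus\cO_{\PP^1}(-1)^{\oplus b}$ with $b=(L\cdot C)\geq 1$, tensor by $L'$, and feed the resulting splitting into Theorem~\ref{char_nef}. Where you genuinely diverge is in how that splitting is obtained. The paper first proves Proposition~\ref{prop2}: for a prime invariant divisor $D$ one has $(\cM_L)\vert_D\simeq \cM_{L\vert_D}\oplus\cO_D^{\oplus m}$, where the key surjectivity of $H^0(X,L)\to H^0(D,L\vert_D)$ comes from the toric vanishing theorem $H^1(X,L\otimes\cO(-D))=0$ (cited from \cite{Mus}); it then applies this recursively along a chain of invariant subvarieties $C=V_1\subset V_2\subset\cdots\subset V_n=X$ to reach the curve (Corollary~\ref{cor2}). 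You instead restrict the defining sequence to $C$ in a single step and prove surjectivity of $H^0(X,L)\to H^0(C,L\vert_C)$ directly from the polytope: the curve is dual to a lattice edge $e$ of length $d=(L\cdot C)$, and the eigensections indexed by the $d+1$ lattice points of $e$ restrict to sections of $\cO_{\PP^1}(d)$ with $d+1$ distinct weights, hence to a basis. One small point you should make explicit is that these restrictions are nonzero: for $u\in e\cap M$ the section $s_u$ vanishes only along those divisors $D_\rho$ whose corresponding facet of the polytope does not contain $u$, and no such $D_\rho$ contains $C$; this is standard but it is exactly what makes ``distinct weights give a basis'' work. With that said, your route is more elementary and self-contained (no appeal to a vanishing theorem), while the paper's route yields the more general divisor-restriction formula of Proposition~\ref{prop2}, which has independent interest. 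The remaining steps --- identifying $\cM_{\cO(d)}\simeq\cO_{\PP^1}(-1)^{\oplus d}$ from $h^0=h^1=0$, splitting off the trivial subbundle, and the bookkeeping with $\tau$ and Theorem~\ref{char_nef} --- coincide with the paper's.
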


We need to describe the restriction of $\cM_L$ to the invariant curves on $X$. We do this recursively,
by first restricting $\cM_L$ to the prime invariant divisors in $X$.

\begin{proposition}\label{prop2}
Let $L$ be an ample line bundle on the projective toric variety $X$.
If $D$ is a prime invariant divisor in $X$, then
$$(\cM_L)\vert_D\simeq \cM_{L\vert_D}\oplus \cO_D^{\oplus m},$$ where
$m=h^0(X,L\otimes\cO(-D))$.
\end{proposition}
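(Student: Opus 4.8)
The plan is to restrict the defining exact sequence for $\cM_L$ to the prime invariant divisor $D$ and compare it with the analogous sequence defining $\cM_{L|_D}$ on $D$ itself. First I would consider the evaluation sequence
\begin{equation*}
0\to \cM_L\to H^0(X,L)\otimes\cO_X\to L\to 0
\end{equation*}
and tensor it with $\cO_D$, i.e.\ restrict to $D$. Restriction is right-exact, and the key point is that the global evaluation map stays surjective after restricting (since $L$ is globally generated, it is globally generated on $D$ as well), so the sequence remains exact on the left: $\Tor_1^{\cO_X}(L,\cO_D)=0$ because $L$ is locally free. This gives
\begin{equation*}
0\to (\cM_L)|_D\to H^0(X,L)\otimes\cO_D\to L|_D\to 0.
\end{equation*}
The subtlety is that the middle term uses the full space $H^0(X,L)$, not $H^0(D,L|_D)$, so this is not yet the evaluation sequence for $\cM_{L|_D}$.

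Next I would account for the difference between $H^0(X,L)$ and $H^0(D,L|_D)$ using the standard exact sequence $0\to L\otimes\cO(-D)\to L\to L|_D\to 0$. Taking global sections and using that $L$ is ample (so the relevant higher cohomology vanishes, which over a toric variety holds for nef line bundles), the restriction map $H^0(X,L)\to H^0(D,L|_D)$ is surjective with kernel $H^0(X,L\otimes\cO(-D))$ of dimension $m=h^0(X,L\otimes\cO(-D))$. Choosing a splitting of the surjection $H^0(X,L)\twoheadrightarrow H^0(D,L|_D)$ decomposes the trivial bundle $H^0(X,L)\otimes\cO_D$ as $\bigl(H^0(D,L|_D)\otimes\cO_D\bigr)\oplus\cO_D^{\oplus m}$, where the second summand maps to zero in $L|_D$. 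The main work is then a diagram chase comparing the displayed restricted sequence with the evaluation sequence $0\to\cM_{L|_D}\to H^0(D,L|_D)\otimes\cO_D\to L|_D\to 0$ for the variety $D$: the evaluation map on $(\cM_L)|_D$ factors so that the $\cO_D^{\oplus m}$ summand lands inside the kernel and the quotient recovers $\cM_{L|_D}$.

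I would finish by extracting the splitting $(\cM_L)|_D\simeq \cM_{L|_D}\oplus\cO_D^{\oplus m}$ from this comparison, most cleanly via the snake lemma (or a short five-lemma argument) applied to the map of short exact sequences whose middle vertical arrow is the projection $H^0(X,L)\otimes\cO_D\to H^0(D,L|_D)\otimes\cO_D$ with kernel $\cO_D^{\oplus m}$ and whose right vertical arrow is the identity on $L|_D$. The kernels then fit into $0\to\cO_D^{\oplus m}\to(\cM_L)|_D\to\cM_{L|_D}\to 0$, and this sequence splits because the $\cO_D^{\oplus m}$ arises as a direct summand of the trivial middle bundle that maps to zero. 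The step I expect to be the main obstacle is verifying that the left-exactness survives restriction and that the comparison sequence splits canonically rather than merely abstractly; the vanishing $H^1(X,L\otimes\cO(-D))=0$ needed for surjectivity of the section restriction is the technical input, and I would justify it from ampleness of $L$ together with toric cohomology vanishing (for instance, because $D$ is invariant and $L\otimes\cO(-D)$ is the relevant twist).
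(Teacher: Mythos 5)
Your proposal follows essentially the same route as the paper's proof: restrict the evaluation sequence to $D$ (exact since $L$ is locally free), use the vanishing $H^1(X,L\otimes\cO(-D))=0$ to get that the restriction $H^0(X,L)\to H^0(D,L\vert_D)$ is surjective and split, and compare the two evaluation sequences in a commutative diagram whose middle vertical map is that split surjection and whose right vertical map is the identity, so that $(\cM_L)\vert_D$ splits as $\cM_{L\vert_D}\oplus\ker(\psi)\simeq\cM_{L\vert_D}\oplus\cO_D^{\oplus m}$. The only point to tighten is your justification of the key vanishing: $L\otimes\cO(-D)$ need not be nef, so nef toric vanishing does not apply directly; the paper instead invokes the specific result for ample line bundles twisted down by invariant divisors, namely Corollary 2.5 of \cite{Mus}.
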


\begin{proof}
 We have an exact sequence
$$0\to\cO(-D)\to\cO_X\to\cO_D\to 0.$$
After tensoring with $L$, and using the fact that
$H^1(X,L\otimes\cO(-D))=0$ (see, for example, Corollary 2.5 in
\cite{Mus}), we deduce that the restriction map $H^0(X,L)\to
H^0(D,L\vert_D)$ is surjective. This restriction map induces a
commutative diagram with exact rows

\[
\begin{CD}
0@>>> (\cM_L)\vert_D@>>>H^0(X,L)\otimes\cO_D@>>>L_D@>>>0\\
@. @VV{\phi}V @VV{\psi}V@| \\
0 @>>>\cM_{L\vert_D}@>>>H^0(D,L\vert_D)\otimes\cO_D@>>>L_D@>>> 0.
\end{CD}
\]

\noindent We have seen that $\psi$ is surjective (hence also split), and
therefore $\phi$ is also surjective and split, and
$\ker(\phi)\simeq\ker(\psi)$. The proposition follows.  \end{proof}

\begin{corollary}\label{cor2}
If $C$ is an invariant curve on $X$, then
\begin{equation}\label{eq2}
(\cM_L)\vert_C\simeq \cO_{\PP^1}^{\oplus
a}\oplus\cO_{\PP^1}(-1)^{\oplus b},
\end{equation}
 where $a=h^0(L)-(L\cdot C)-1$
and $b=(L\cdot C)$.
\end{corollary}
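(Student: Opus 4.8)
The plan is to reduce to the one-dimensional case by restricting $\cM_L$ repeatedly to prime invariant divisors, applying Proposition~\ref{prop2} at each stage, and then to identify explicitly the kernel bundle attached to an ample line bundle on $\PP^1$. First I would realize the invariant curve $C$ as the bottom of a flag of invariant subvarieties. Writing $C=V(\tau)$ for the orbit closure of a cone $\tau\in\Delta$ of dimension $n-1$, I choose a complete flag of faces $\tau_1\subset\tau_2\subset\cdots\subset\tau_{n-1}=\tau$ with $\dim\tau_i=i$ (such a flag always exists in the face lattice of a cone), and set $X_i=V(\tau_i)$ with $X_0=X$. Then each $X_i$ is a projective toric variety of dimension $n-i$, the subvariety $X_i$ is a prime invariant divisor in $X_{i-1}$ (it corresponds to the image of $\tau_i$, a ray in the star fan of $\tau_{i-1}$), and $X_{n-1}=C\simeq\PP^1$. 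Since $L$ is ample, $L\vert_{X_i}$ is ample for every $i$, so the hypotheses of Proposition~\ref{prop2} hold at each stage.

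Next I would run Proposition~\ref{prop2} along this flag. At the $i$-th step it gives
\[
(\cM_{L\vert_{X_{i-1}}})\vert_{X_i}\simeq \cM_{L\vert_{X_i}}\oplus\cO_{X_i}^{\oplus m_i}
\]
for some $m_i\geq 0$. Restricting and inducting, the trivial summands simply accumulate while the nontrivial part remains a single kernel bundle, so that
\[
(\cM_L)\vert_C\simeq \cM_{L\vert_C}\oplus\cO_C^{\oplus m},\qquad m=m_1+\cdots+m_{n-1}.
\]
This reduces the statement to computing $\cM_{L\vert_C}$ on $C\simeq\PP^1$.

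For the base computation, set $d=(L\cdot C)=\deg(L\vert_C)\geq 1$, so $L\vert_C\simeq\cO_{\PP^1}(d)$ and $\cM_{L\vert_C}$ is a vector bundle of rank $d$ sitting in
\[
0\to \cM_{L\vert_C}\to H^0(\cO_{\PP^1}(d))\otimes\cO_{\PP^1}\to\cO_{\PP^1}(d)\to 0.
\]
Taking degrees gives $\deg\cM_{L\vert_C}=-d$, and since the evaluation map is an isomorphism on global sections we get $H^0(C,\cM_{L\vert_C})=0$. In the splitting $\cM_{L\vert_C}\simeq\bigoplus_j\cO(a_j)$ this vanishing forces every $a_j\leq -1$; combined with $\sum_j a_j=-d$ over exactly $d$ summands, it yields $\cM_{L\vert_C}\simeq\cO_{\PP^1}(-1)^{\oplus d}$. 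Hence $b=d=(L\cdot C)$, and comparing ranks with ${\rm rk}(\cM_L)=h^0(L)-1$ gives $a=m=h^0(L)-(L\cdot C)-1$, as claimed.

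The argument is essentially routine once Proposition~\ref{prop2} is in hand; the only step needing genuine care is the $\PP^1$ computation, where the decisive input is the vanishing $H^0(C,\cM_{L\vert_C})=0$, which alone pins down all the splitting types.
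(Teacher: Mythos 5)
Your proof is correct and follows essentially the same route as the paper: both iterate Proposition~\ref{prop2} along a flag of invariant subvarieties from $X$ down to $C$, and both pin down $\cM_{L\vert_C}\simeq\cO_{\PP^1}(-1)^{\oplus (L\cdot C)}$ on $C\simeq\PP^1$ via its rank, degree, and the vanishing of $H^0$. The only cosmetic difference is that you build the flag explicitly from faces of $\tau$ and check ampleness of the restrictions at each step, which the paper leaves implicit.
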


\begin{proof}
 We can find a sequence of irreducible torus-invariant subvarieties
\[
C=V_1\subset V_2\subset\cdots\subset V_n=X,
\]
with $\dim(V_i)=i$ for all $i$. Applying the proposition $(n-1)$ times, we see that
$(\cM_L)\vert_C\simeq \cM_{L\vert_C}\oplus \cO_C^{\oplus a}$ for
some $a$.

On the other hand, on $C\simeq \PP^1$ we have
$\cM_{\cO(m)}\simeq\cO(-1)^{\oplus m}$ for every $m>0$ (note that
$\cM_{\cO(m)}$ is a vector bundle of rank $m$ and degree $-m$ such
that $h^0(\cM_{\cO(m)})=0$). Hence $(\cM_L)\vert_C$ has an
expression as in (\ref{eq2}), and $a$ and $b$ can be determined using the fact that the rank of $\cM_L$ is $h^0(L)-1$ and $\deg((\cM_L)\vert_C)=-(L\cdot C)$.
\end{proof}

\begin{proof}[Proof of Proposition~\ref{prop1}]
The first assertion
in the proposition follows directly from
Corollary~\ref{cor2}. Note that for every 
invariant curve $C$, the restriction $\cM_L\vert_C$ has $\cO(-1)$ as
a factor (this is a consequence of the fact that $(L\cdot C)>0$).
The last two assertions in the proposition follow from the first one using
Theorem~\ref{char_nef}.
\end{proof}

\bigskip

In light of Proposition~\ref{prop1}, and motivated by
Question~\ref{q1}, we see that it would be desirable to get
conditions on nef toric vector bundles that would guarantee the
vanishing of higher cohomology. It is well-known that the higher
cohomology of a nef line bundle on a toric variety vanishes.
However, as the following example shows,
 this fails in higher rank even for ample toric vector bundles.
 
 \begin{example}  \label{non_vanishing}
Let $P= \< (0,0,0,0), (1,0,0,0), (0,1,0,0), (0,0,1,0), 
(1,1,1,3) \>$ and let $L$ be the corresponding ample line bundle
on the toric variety $X$.  By Proposition~\ref{prop1}, the toric vector bundle $\cM_L \otimes L^2$ is ample on $X$.  The product map $H^0(X,L) \otimes H^0(X, L^2) \rightarrow H^0(X, L^3)$ is not surjective, since the point $(1,1,1,1) \in 3P$ cannot be written as the sum of a lattice point in $P$ and one in $2P$.  Therefore $H^1(X, \cM_L \otimes L^2)$ is nonzero.
 \end{example}

\noindent The toric variety $X$ in the example above appears also in Example~\ref{example12}, where we show that the ample vector bundle $\cM_{L^3} \otimes L^2$ is not globally generated.  This toric variety is singular, but there are also ample vector bundles with nonvanishing higher cohomology on smooth toric varieties, including projective spaces.  We thank O. Fujino for pointing out the following example. 

 \begin{example}\label{Fujino}
Let $F: \PP^n \rightarrow \PP^n$ be the toric morphism induced by multiplication by $q$ on $N_\RR$, where $q$ is an integer greater than $n+1$.  This map $F$ is known as the $q^{\rm th}$ toric Frobenius morphism; see Remark~\ref{rem:Frob} below.  Let $T$ be the tangent bundle on $\PP^n$, and $K \simeq \cO(-n-1)$ the canonical bundle.  The restriction of the pullback $F^* T$ to each invariant curve splits as  $\cO(q)^{\oplus (n-1)}\oplus\cO(2q)$.  We deduce that for $1 \leq j \leq n$, $\wedge^j F^*T \otimes K$ is ample.  Now, by \cite[Proposition~3.5]{Fujino}, if
$H^i(\PP^n, \wedge^j F^* T \otimes K)=0$ then 
$H^i(\PP^n, \wedge^j T \otimes K)=0$. On the other hand, $H^i(\PP^n, \wedge^j T \otimes K)$ is nonzero exactly when $i = n-j$, by Serre duality.  In particular, $\wedge^j F^* T \otimes K$ is an ample vector bundle on $\PP^n$ with nonvanishing higher cohomology for $1 \leq j < n$.
 \end{example}

We mention that L.~Manivel proved in \cite{Man} some interesting
vanishing results for ample (but not necessarily toric) vector
bundles on smooth toric varieties. He showed that if $\cE$ is such a
vector bundle, then $H^i(X,\Omega_X^p\otimes S^j(\cE))=0$ for every
$i\geq {\rm rk}(\cE)$, and $H^i(X,\Omega_X^p\otimes\wedge^j(\cE))=0$
for every $i>{\rm rk}(\cE)-j$. Further vanishing theorems for not necessarily
equivariant vector bundles and reflexive sheaves on toric varieties
have been proved by Fujino \cite{Fujino}.  However, these results do
not apply in our setting, to give vanishing for $H^1(X, \cM_L \otimes L')$ when $L$ and $L'$ 
are ample. An
interesting question is whether one could get stronger vanishing theorems for equivariant vector bundles.

\bigskip

We now turn to the question of the global generation of the bundles of the form $\cM_L\otimes
L'$. Let $L$ and $L'$ be globally generated line bundles on the complete toric variety $X$. 
We have seen that 
if both $L$ and $L'$ are ample then
$\cM_L \otimes L'$ is nef.  Furthermore, if $(L' \cdot C)\geq 2$ for every $T$-invariant curve $C$ then $\cM_L \otimes L'$ is ample.  We now give a necessary and sufficient combinatorial condition for
 $\cM_L \otimes L'$ to be generated by global sections.  As an application of this combinatorial criterion, we give examples of nef and ample toric vector bundles that are not globally generated.

Fix $T$-Cartier divisors $D$ and $D'$ on $X$ such that $L\simeq\cO(D)$ and $L'\simeq\cO(D')$.
This makes $L$ and $L'$ equivariant line bundles, and 
therefore $\cM_L$ and $\cM_L \otimes L'$ become toric vector bundles with the induced equivariant structures.  It is convenient to work with the negatives of the usual lattice polytopes associated 
to $D$ and $D'$, so we put
$P=-P_{D}$ and $P'=-P_{D'}$ .
Therefore $P$ is the convex hull of the lattice points $u \in M$ such that $H^0(X,L)_u$ is not zero, and similarly for $P'$ (recall that we assume that $L$ and $L'$ are globally generated).  For each maximal cone $\sigma \in \Delta$, let $u_\sigma$ and $u'_\sigma$ be the lattice points such that the restrictions of $D$ and $D'$ to $U_\sigma$ are equal to ${\rm div}(\chi^{u_\sigma})$ and ${\rm div}
(\chi^{u'_\sigma})$, respectively.
Thus $P$ is the convex hull of the $u_{\sigma}$, and similarly for $P'$.
Note that $H^0(X,L)_u=k\cdot \chi^{-u}$ if $u\in P\cap M$, and it is zero otherwise.

\begin{proposition}  \label{prop:globgen}
If $L$ and $L'$ are globally generated, then the vector bundle $\cM_L \otimes L'$ is generated by global sections if and only if $P \cap (u + u'_\sigma - P')$ contains at least two lattice points for every lattice point $u \in P \smallsetminus \{u_\sigma\}$ and every maximal cone $\sigma \in \Delta$.
\end{proposition}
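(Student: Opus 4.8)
The plan is to reduce global generation of $\cM_L\otimes L'$ to the surjectivity of the evaluation map on each torus weight at the fixed points of $X$, and then to read the condition off from the lattice points of $P$. Since $\cM_L\otimes L'$ is an equivariant vector bundle, the evaluation map $H^0(X,\cM_L\otimes L')\otimes\cO_X\to \cM_L\otimes L'$ is $T$-equivariant, so the locus where $\cM_L\otimes L'$ fails to be globally generated is closed and $T$-invariant. A nonempty closed $T$-invariant subset of a complete toric variety contains a torus-fixed point, so it is enough to prove that the evaluation $H^0(X,\cM_L\otimes L')\to (\cM_L\otimes L')\otimes k(x_\sigma)$ is surjective for every fixed point $x_\sigma$, that is, for every maximal cone $\sigma\in\Delta$; and since this map is $T$-equivariant, I can check surjectivity one weight at a time.

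Next I would describe both sides as $T$-representations. Tensoring the defining sequence (\ref{sequence_ML}) with $L'$ and restricting the resulting short exact sequence of vector bundles to the fiber at $x_\sigma$, and using that the fiber of $L'$ at $x_\sigma$ has weight $u'_\sigma$ together with $H^0(X,L)=\bigoplus_{u\in P\cap M}k\cdot\chi^{-u}$, I would identify $(\cM_L\otimes L')\otimes k(x_\sigma)=\bigoplus_{u\in (P\cap M)\setminus\{u_\sigma\}}k\cdot\chi^{-u}\otimes(L'\otimes k(x_\sigma))$, so that its weights are exactly the $u+u'_\sigma$ with $u\in(P\cap M)\setminus\{u_\sigma\}$, each occurring once. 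On the other hand, taking global sections identifies $H^0(X,\cM_L\otimes L')$ with the kernel of the multiplication map $H^0(X,L)\otimes H^0(X,L')\to H^0(X,L\otimes L')$; since $L\otimes L'$ corresponds to $P+P'$, its weight-$w$ part is spanned by the combinations $\xi=\sum c_{a,a'}\,\chi^{-a}\otimes\chi^{-a'}$ over pairs $a\in P\cap M$, $a'\in P'\cap M$ with $a+a'=w$, subject to the single relation $\sum c_{a,a'}=0$.

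Then, fixing a fiber weight $w=u+u'_\sigma$ with $u\in(P\cap M)\setminus\{u_\sigma\}$, I would trace the evaluation through the commutative square comparing global sections and fibers of the sequence $0\to\cM_L\otimes L'\to H^0(X,L)\otimes L'\to L\otimes L'\to 0$. The evaluation $H^0(X,L')\to L'\otimes k(x_\sigma)$ kills $\chi^{-a'}$ unless $a'=u'_\sigma$, so on weight $w$ the fiber evaluation sends $\xi$ to $c_{u,u'_\sigma}$ times a generator of the one-dimensional fiber. Hence the evaluation is surjective in weight $w$ if and only if there is a kernel element $\xi$ with $c_{u,u'_\sigma}\neq 0$, and because the only constraint is $\sum c_{a,a'}=0$, this happens precisely when there are at least two admissible pairs $(a,a')$. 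Rewriting the admissible pairs as the lattice points $a\in P\cap(u+u'_\sigma-P')$, this is exactly the condition that $P\cap(u+u'_\sigma-P')$ contain at least two lattice points; ranging over all such $u$ and all maximal cones $\sigma$ gives the stated criterion.

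The step I expect to be most delicate is the weight bookkeeping in the last two paragraphs: one has to pin down the $T$-weights of the fibers at $x_\sigma$ with the correct sign conventions (for the action on $\chi^u$ and on the fibers of the $\cL_w$) and verify that, after passing to the kernel, the fiber evaluation in a fixed weight is literally the single coefficient $c_{u,u'_\sigma}$. The reduction to fixed points and the final combinatorial translation are routine by comparison.
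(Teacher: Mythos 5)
Your proof is correct and takes essentially the same route as the paper's: reduction to torus-fixed points via the closed $T$-invariant non-generation locus, the weight-by-weight identification of $H^0(X,\cM_L\otimes L')$ as the kernel of the multiplication map, and the observation that the fiber evaluation in weight $u+u'_\sigma$ is the single coefficient $c_{u,u'_\sigma}$, constrained only by $\sum c_{a,a'}=0$. The only cosmetic difference is that you identify the fiber of $\cM_L\otimes L'$ at $x_\sigma$ directly as a multiplicity-free $T$-representation (so the weight $u_\sigma+u'_\sigma$ is excluded from the start), whereas the paper counts the dimension of the image inside the fiber of $H^0(X,L)\otimes L'$ and excludes that weight by noting that $u_\sigma$ and $u'_\sigma$ are vertices of $P$ and $P'$.
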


\noindent Note that both $P$ and $(u + u'_\sigma - P')$ contain $u$, so the condition in Proposition~\ref{prop:globgen} is that the intersection of $P$ with $(u+u'_{\sigma}-P')$ should contain at least one other lattice point.

\begin{proof}
Tensoring the exact sequence (\ref{sequence_ML}) with $L'$ 
gives an exact sequence
$$0\to H^0(X, \cM_L\otimes L')_w\to (H^0(X, L)\otimes H^0(X, L'))_w\to H^0(X, L\otimes L')_w,$$
for every $w \in M$.
It follows that
$H^0(X, \cM_L\otimes L')_w$
consists of the sums  $\sum_u a_u\chi^{-u}\otimes\chi^{u-w}$ with $\sum_ua_u=0$, where the sum is over those  $u\in (P\cap M)$ with $w-u\in P'$. 

For every maximal cone $\sigma$ in $\Delta$, let
 $x_\sigma$ be the unique $T$-fixed point in $U_\sigma$. The vector bundle 
 $\cM_L\otimes L'$ is globally generated if and only if for every such $\sigma$, the image of
 $H^0(X, \cM_L\otimes L')$ in the fiber at $x_{\sigma}$ has dimension ${\rm rk}(\cM_L\otimes L')=h^0(L)-1$. Let us fix $w\in M$.
 Since $D'={\rm div}(\chi^{u'_{\sigma}})$ on $U_{\sigma}$, 
 it follows that the image of the section
 $s=\sum_ua_u\chi^{-u}\otimes\chi^{u-w}\in H^0(X, \cM_L\otimes L')_w$ in the fiber at $x_{\sigma}$
of $H^0(X, L)\otimes L'$ is $a_{w-u'_{\sigma}}\chi^{u'_{\sigma}-w}$ if $w-u'_{\sigma}$ is in $P$.

Suppose now that $w-u'_{\sigma}\in P$.
If there is at most one $u\in P\cap (w-P')\cap M$, then in the sum defining $s$ we have at most one term, and $a_{w-u'_{\sigma}}=0$ for every section
$s$ as above. Otherwise, $\chi^{u'_{\sigma}-w}$ lies in the image of $H^0(\cM_L\otimes L')_w$.
Note that if $u\in P$ and $u'\in P'$ are such that $u+u'=u_{\sigma}+u'_{\sigma}$, then 
$u=u_{\sigma}$ and $u'=u'_{\sigma}$ (this follows since $u_{\sigma}$ and $u'_{\sigma}$
are vertices of $P$ and $P'$, respectively). This shows that if $w - u'_{\sigma}=u_{\sigma}$, then 
$\#(P\cap (w-P')\cap M)=1$. It follows from the above discussion that the image of
$H^0(X, \cM_L\otimes L')$ in the fiber at $x_{\sigma}$ has rank $h^0(L)-1$ if and only if
for every lattice point $u$ in $P\smallsetminus\{u_{\sigma}\}$ we have
$\#(P\cap (u'_{\sigma}+u-P')\cap M)\geq 2$.
\end{proof}

\begin{corollary} Let $L$ be a globally generated line bundle on a toric 
variety $X$. Then $\cM_{L}\otimes L$ is globally generated. 
\end{corollary}
 
\begin{proof} For every maximal cone $\sigma$, and for all $u\in P\cap M 
\smallsetminus  \{u_{\sigma}\}$, the lattice points $u$ and $u_{\sigma}$ 
are contained in $P\cap 
\left(u+u_{\sigma}-P\right)$, and the assertion follows from 
 Proposition \ref{prop:globgen}.
\end{proof}

\begin{remark}\label{rem5}
The above corollary can also be deduced using the
Koszul complex associated to the evaluation map of $L$, which shows that $\cM_L$
is a quotient of $\left(\wedge^2H^0(X,L)\right)\otimes L^{-1}$.
\end{remark}

\begin{remark}\label{rem:Frob} For every positive integer $q$, let $F_q\colon X\to X$
be the $q^{\rm th}$ \emph{toric Frobenius morphism}: this is induced by 
 multiplication by $q$ on $N_{\RR}$. 
The name is due to the fact that if $k$ has characteristic $p$ then $F_p$ is the relative Frobenius morphism on $X$. If $W\subseteq H^0(X,L^q)$ is the vector subspace generated by $s^{\otimes q}$, for
$s\in H^0(X,L)$, then we have an exact sequence of equivariant vector bundles
$$0\to F_q^*(\cM_L)\to W\otimes\cO_X\to L^q\to 0.$$
 Arguing as in the proof of Proposition~\ref{prop:globgen},
 one can show that $F_q^*(\cM_L)\otimes L'$ is globally generated if 
 and only if, for every maximal cone $\sigma$ and 
 every lattice point $u\in P\smallsetminus\{u_{\sigma}\}$, the
 set $P\cap\frac{1}{q}(u'_{\sigma}+qu-P')$ contains at least two lattice points.
\end{remark}

Our main application of Proposition \ref{prop:globgen} is to give
examples of toric vector bundles that are ample or nef, but not globally 
generated.

\begin{example}\label{example11}
 Let $P = \< (0,0,0), (1,0,0), (0,1,0), (1,1,2)\>$, and let 
 $L$, $X$ be the ample line bundle and toric variety associated to $-P$.
 \footnote{Note that the polarized toric varieties associated to $P$ and to $-P$
 are canonically isomorphic.} 
 Then $\cM_{L^2}\otimes L$ is nef by Proposition 
\ref{prop1}, but not globally generated, since 
$u = (1,1,1) \in 2P$, and the maximal cone corresponding to $(0,0,0)$ 
violates the condition in Proposition \ref{prop:globgen}.
If  $f\colon Y\to X$ is a resolution of singularities, then 
$f^*(\cM_{L^2}\otimes L)$ gives an example of a nef 
but not globally generated toric bundle on a smooth toric threefold.
\end{example}

By going to dimension four, we can similarly get an example of an ample
toric vector bundle that is not globally generated (note, however, that in this case the
toric variety is not smooth).

\begin{example}\label{example12}
 Let $P= \< (0,0,0,0), (1,0,0,0), (0,1,0,0), (0,0,1,0), 
(1,1,1,3) \>$,  and let $L$, $X$ be the  ample line bundle
and the toric variety associated to $-P$. Note that $\cM_{L^3}\otimes L^2$ is ample by 
Proposition \ref{prop1}. However, it is not globally generated, since 
 \[ 3P\cap \left(\left(1,1,1,1\right) -2P\right) \cap M 
= \left\{ \left( 1,1,1,1\right)\right\},\]
hence the condition in Proposition~\ref{prop:globgen} is not satisfied for 
 the 
maximal cone $\sigma$ corresponding to $(0,0,0,0)$, and for 
$u=(1,1,1,1)\in 3P$.
\end{example}

We can get similar examples in dimension three, if we consider
also bundles of the form $F_q^*(\cM_L)\otimes L'$.

\begin{example}\label{example13}
 Let $P=\<(0,0,0), (1,0,0), (0,1,0), (1,1,3)\>$.
If $F_2\colon X\to X$ is the toric Frobenius morphism as in Remark \ref{rem:Frob}, 
then $\cE:=\left(F_2^*\cM_{L^2}\right)\otimes L^3$ is ample but not 
globally generated. Indeed, for every $q$ and every invariant curve $C$ on $X$
we have  $F^*_q\cM_{L^2}\vert_C\simeq
\cO_{\PP^1}^{\oplus
a}\oplus\cO_{\PP^1}(-q)^{\oplus b}$, with $a,b$ given by Corollary
\ref{cor2}. Hence $\cE$ is ample by Theorem \ref{char_nef}. 
On the other hand, 
 if $\sigma$ is the maximal 
cone corresponding to $(0,0,0)$,
since
$$2P\cap \left((1,1,1)-\frac{3}{2}P\right)\cap M=\{(1,1,1)\},$$
we see by Remark~\ref{rem:Frob} that
$\cE$ is not globally generated. 
\end{example}

The following proposition shows that when $L$ is normally generated, 
the vector bundles appearing in the preceding examples are always globally generated. 
Recall that a line bundle $L$ on a projective variety $X$ is normally generated if it is very ample,
and the induced embedding of $X$ by the complete linear system $|L|$ is projectively normal.
If $P$ is a lattice polytope in $M_{\RR}$, and $X$, $L$ are the toric variety and the ample line bundle associated to
 $P$, then $L$ is normally generated if and only if for every $m\geq 2$ we have
 $$mP\cap M=((m-1)P\cap M)+(P\cap M).$$

\begin{proposition}
If $L$ is normally generated, then 
$\cM_{L^j}\otimes L^k$ is globally generated for all $j,k\geq1$.  
\end{proposition}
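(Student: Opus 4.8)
The plan is to verify the combinatorial criterion of Proposition~\ref{prop:globgen} directly, using the normal generation hypothesis. More precisely, the goal is to show that for every maximal cone $\sigma \in \Delta$, every lattice point $u \in (jP) \smallsetminus \{j u_\sigma\}$, the intersection $(jP) \cap (u + k u'_\sigma - kP)$ contains at least two lattice points, where here the role of the polytope of $L^j$ is played by $jP$ (with vertex $j u_\sigma$ at the cone $\sigma$) and the polytope of $L^k = L'$ is played by $kP$ (with vertex $k u_\sigma$). Since $u$ itself always lies in this intersection, the task is to produce one additional lattice point in $(jP) \cap (u + k u_\sigma - kP) \cap M$.

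First I would fix $\sigma$, let $v := u_\sigma$ be the corresponding vertex of $P$, and take any lattice point $u \in jP \cap M$ with $u \neq jv$. Because $L$ is normally generated, I can repeatedly apply the decomposition $mP \cap M = ((m-1)P \cap M) + (P \cap M)$ to write $u = w_1 + \cdots + w_j$ with each $w_i \in P \cap M$. Since $u \neq jv$, at least one summand differs from $v$; after reindexing, say $w_1 \neq v$. The idea is then to build a second lattice point in the intersection by swapping a single summand toward the vertex $v$: consider the point $u' := u - w_1 + v$, which is again a sum of $j$ lattice points of $P$ (so $u' \in jP \cap M$) and which differs from $u$. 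I must check that $u'$ lies in $u + kv - kP$, i.e.\ that $u - u' = w_1 - v$ lies in $kv - kP$ shifted appropriately; concretely $u' = u + (v - w_1)$, so I need $u' \in u + kv - kP$, equivalently $v - w_1 \in kv - kP$, equivalently $w_1 \in (k-1)v + kP - \cdots$. The clean reformulation is that I need $u - u' \in k(P - v)$ read additively: since $w_1, v \in P$ we have $w_1 - v \in P - v$, and because $k \geq 1$ the cone generated by $P - v$ (with apex at the vertex $v$) contains $P - v$, so $v - w_1 \in kv - kP$ holds. This is precisely where the vertex property of $v = u_\sigma$ is used.

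The main obstacle I anticipate is the bookkeeping that $u'$ genuinely lies in $u + k u'_\sigma - kP$ when $u'_\sigma \ne u_\sigma$, i.e.\ the general case $L' = L^k$ rather than the special case treated in the corollary after Proposition~\ref{prop:globgen}. Here one must exploit that $u'_\sigma = k u_\sigma$ is the vertex of the polytope $kP$ of $L^k$ lying over $\sigma$, so that the local cone of $kP$ at $u'_\sigma$ is $k$ times the local cone of $P$ at $u_\sigma$, and the displacement $v - w_1$ stays inside $k(u_\sigma - P)$ exactly because moving from the vertex $v$ to any other point $w_1$ of $P$ is a motion into the tangent cone of $P$ at $v$. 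I would phrase this invariantly: the recession/tangent cone of $P$ at its vertex $u_\sigma$ is $\sigma^\vee$ (up to sign conventions), and both $jP$ and $u + k u_\sigma - kP$ have supporting behavior at $u_\sigma$ compatible with this cone, so a single vertex-ward swap lands inside both polytopes simultaneously.

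Finally, I would package this as follows. Given $u \in jP \cap M \smallsetminus \{j u_\sigma\}$, decompose $u$ into $j$ lattice points of $P$ via normal generation, pick a summand $w \neq u_\sigma$, and set $u' = u - w + u_\sigma$; then argue $u' \in jP \cap M$ (normal generation again, or simply that it remains a sum of $j$ points of $P$) and $u' \in (u + k u_\sigma - kP)$ (the vertex/tangent-cone argument above), with $u' \neq u$ since $w \neq u_\sigma$. This exhibits the required second lattice point, so Proposition~\ref{prop:globgen} applies and $\cM_{L^j} \otimes L^k$ is globally generated. The only genuine input beyond formal manipulation is the normal generation hypothesis, which is exactly what licenses the decomposition of $u$ into lattice points of $P$; without it one cannot guarantee a summand to swap.
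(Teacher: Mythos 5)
Your proof is correct and uses essentially the same argument as the paper: both verify the criterion of Proposition~\ref{prop:globgen} by decomposing the lattice point via normal generation into lattice points of $P$, then swapping one summand $w_1 \neq u_\sigma$ for the vertex $u_\sigma$ to produce the required second lattice point in the intersection. The only structural difference is that the paper first reduces to $k=1$, writing $\cM_{L^j}\otimes L^k = (\cM_{L^j}\otimes L)\otimes L^{k-1}$ and using that a globally generated bundle tensored with a globally generated line bundle stays globally generated, whereas you check the criterion for general $k$ directly; this works, but your justification of the containment $u' \in u + ku_\sigma - kP$ via tangent cones is a non sequitur as written, since membership of $w_1 - u_\sigma$ in the tangent cone of $P$ at $u_\sigma$ does not imply membership in the dilation $k(P - u_\sigma)$. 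The clean one-line argument needs no vertex property at all: $u + ku_\sigma - u' = w_1 + (k-1)u_\sigma$ lies in $P + (k-1)P = kP$ by convexity of $P$, since $w_1$ and $u_\sigma$ both lie in $P$.
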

\begin{proof}
It suffices to show that 
$\cM_{L^j}\otimes L$ is 
globally generated. 
Let $P$ be the polytope associated to $L$ 
as in Proposition~\ref{prop:globgen}.
Suppose that $\sigma$ is a maximal cone, and let 
$w$ be a lattice point in  
$ jP \smallsetminus \{ju_{\sigma}\}$.
Since $L$ is normally generated, we can write $w=u+u'$ with 
$u\in (j-1)P\cap M$ and $u'\in P\cap M\smallsetminus \{u_{\sigma}\}$.
 Then $\{w,u+u_{\sigma}\}
\subseteq jP \cap (w+u_{\sigma} - P) 
 \cap M$, and now we apply
 Proposition~\ref{prop:globgen}.
\end{proof}

When $X$ is smooth, an even stronger statement holds. 
\begin{proposition}\label{prop:smooth}
Let $L,L'$ be ample  line bundles on a smooth toric variety 
$X$, and suppose that 
the multiplication map 
\[ H^0(X,L)\otimes H^0(X,L') \to H^0(X,L\otimes L')\]
is surjective.  Then $\cM_{L\otimes L'}\otimes L'$ is  
globally generated. 
\end{proposition}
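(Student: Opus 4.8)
The plan is to verify the combinatorial criterion of Proposition~\ref{prop:globgen}, applied to the bundle $\cM_{L\otimes L'}\otimes L'$. Here the ``inner'' line bundle is $L\otimes L'$, whose associated polytope is the Minkowski sum $Q:=P+P'$ (both $L$ and $L'$ are ample, so $L\otimes L'$ is ample with the same normal fan as $X$), and the twisting bundle is $L'$, with polytope $P'$. For a maximal cone $\sigma$ the corresponding vertex of $Q$ is $u_\sigma+u'_\sigma$. Thus the criterion requires that, for every maximal cone $\sigma$ and every lattice point $w\in Q\smallsetminus\{u_\sigma+u'_\sigma\}$, the set $Q\cap(w+u'_\sigma-P')$ contain a lattice point other than $w$.

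The surjectivity hypothesis is exactly the statement that $Q\cap M=(P\cap M)+(P'\cap M)$ (cf.\ Remark~\ref{reformulation_Minkowski}). So I would first write $w=u+u'$ with $u\in P\cap M$ and $u'\in P'\cap M$. For any such decomposition the point $u+u'_\sigma$ lies in $Q$ and satisfies $w+u'_\sigma-(u+u'_\sigma)=u'\in P'$, so it lies in $Q\cap(w+u'_\sigma-P')$; moreover it differs from $w$ precisely when $u'\neq u'_\sigma$. Hence the criterion holds immediately whenever $w$ admits a decomposition with nontrivial $P'$-part.

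It then remains to treat those $w$ for which every decomposition has $u'=u'_\sigma$; in that case $u:=w-u'_\sigma\in P$ and, since $w\neq u_\sigma+u'_\sigma$, we have $u\neq u_\sigma$. Let $e_1,\dots,e_n$ be the primitive edge vectors of $Q$ (equivalently of $P$ and of $P'$) at the vertex corresponding to $\sigma$; because $X$ is smooth these form a basis of $M$, and because $P'$ is a lattice polytope with this vertex we have $u'_\sigma+e_i\in P'$ for each $i$. I would produce the required second point as $w-e_i=(u-e_i)+u'_\sigma$: it lies in $Q\cap(w+u'_\sigma-P')$ as soon as $u-e_i\in P$, using $u'_\sigma+e_i\in P'$. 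Everything therefore comes down to the following lemma about smooth polytopes: if $v$ is a vertex of a smooth lattice polytope $P$ with primitive edge vectors $e_1,\dots,e_n$, then every lattice point $a=v+\sum_i m_ie_i\neq v$ (so that $m_i\ge 0$, since $P$ lies in the tangent cone $v+\operatorname{Cone}(e_i)$) satisfies $a-e_i\in P$ for at least one $i$ with $m_i\ge 1$.

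Proving this lemma is the main obstacle. I would argue by induction on $n=\dim P$ after normalizing coordinates so that $v=0$, the $e_i$ are the standard basis, and $P\subseteq\RR^n_{\ge 0}$ with the coordinate hyperplanes as the facets through $0$. If some $m_i=0$, then $a$ lies on the facet $F_i=P\cap\{x_i=0\}$, itself a smooth polytope of dimension $n-1$ having $0$ as a vertex with edge vectors $\{e_j\}_{j\neq i}$; applying the inductive hypothesis to $a\in F_i\smallsetminus\{0\}$ yields some $j$ with $a-e_j\in F_i\subseteq P$. The remaining, and genuinely harder, case is when all $m_i\ge 1$, i.e.\ $a$ lies in the interior of the positive orthant. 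Here I expect to use smoothness at $a$ itself: passing if necessary to a vertex of the minimal face containing $a$, the tangent cone there is unimodular and, since $0\in P$, contains the direction $-a$; the requirement that the edges keep $P$ inside $\RR^n_{\ge 0}$ should then force one of the directions $-e_i$ into that cone as an actual edge step, giving $a-e_i\in P$. Pinning down this last step---showing that smoothness together with $P\subseteq\RR^n_{\ge 0}$ rules out a lattice point all of whose orthant-neighbors $a-e_i$ leave $P$---is where the real work lies.
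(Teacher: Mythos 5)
Your overall strategy is the same as the paper's: reduce to Proposition~\ref{prop:globgen}, use the surjectivity hypothesis to write $w=u+u'$, and when $u'\neq u'_\sigma$ take $\{w,\,u+u'_\sigma\}$ as the two required lattice points. That part is correct, and it is exactly what the paper does. The divergence is in the residual case (every decomposition has $u'=u'_\sigma$), which the paper dispatches with the single assertion ``Since $X$ is smooth, we may assume in addition that $u'\neq u'_\sigma$,'' and which you propose to settle by proving that in a smooth lattice polytope every lattice point $a\neq v$ has some orthant-neighbor $a-e_i$ still in $P$. You correctly flag this as where the real work lies, but the lemma you propose is \emph{false}, so your route cannot be completed. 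Consider the hexagon
\[
P=\mathrm{conv}\{(0,0),\,(1,0),\,(2,1),\,(9,15),\,(1,3),\,(0,1)\},
\]
with facet inequalities $x\geq 0$, $y\geq 0$, $x-y\leq 1$, $2x-y\leq 3$, $2y-3x\leq 3$, $y-2x\leq 1$. Its successive primitive edge vectors $(1,0),(1,1),(1,2),(-2,-3),(-1,-2),(0,-1)$ all have consecutive determinants $1$, so $P$ is smooth (it is the polytope of an ample divisor on the smooth toric surface with rays $(1,0),(0,1),(-1,1),(-2,1),(3,-2),(2,-1)$). At $v=(0,0)$ the edge vectors are $e_1,e_2$, yet the lattice point $a=(9,15)\in P$ has $a-e_1=(8,15)\notin P$ (it violates $2y-3x\leq 3$) and $a-e_2=(9,14)\notin P$ (it violates $2x-y\leq 3$). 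So smoothness alone does not rule out the configuration you need to exclude, and no induction on faces can save a statement that already fails in dimension two.

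The failure also shows what any correct completion must involve: your lemma is a statement about $P$ alone, and in the residual case your argument never uses the surjectivity hypothesis or the polytope $P'$, but these have to enter. In the example above with $L'=L$ (so $P'=P$), the ``bad'' point $w=(9,15)$ does admit a decomposition with both summands nonzero, namely $(9,15)=(2,3)+(7,12)$ with $(2,3),(7,12)\in P$ --- it arises from the edge directions of $P$ at $w$ (whose negatives happen to lie in $P'$), not from steps by $e_i$ at $u_\sigma$. Note also that your requirement $u-e_i\in P$ is stronger than needed: the second point only has to lie in $Q\cap(w+u'_\sigma-P')$, i.e.\ you only need $w-e_i\in Q=P+P'$, and indeed $(8,15)\in 2P$ in the example even though $(8,15)\notin P$; but exploiting this slack again requires working with $P+P'$ and the hypothesis, which your proposal does not do. As written, then, the proof has a genuine gap at precisely the step you deferred, and the specific lemma you propose to close it is false; be aware that the paper itself offers no more than the one-sentence appeal to smoothness at this same step, so filling it honestly requires a different argument.
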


\begin{proof}
Let $P$ and $P'$ be the polytopes associated to $L$ and $L'$, 
as in Proposition~\ref{prop:globgen}.
Suppose that $\sigma$ is a maximal cone, and let 
$w\in 
 \left( P+P'\right) \cap M  \setminus \{u_{\sigma}+u'_{\sigma}\}$.
 Since by assumption the map $P\cap M + P'\cap M \to \left(P+P'\right)\cap M$ 
 is surjective, we can write $w=u+u'$ for some $u\in P\cap M$ and $u'\in P'\cap M$. 
Since $X$ is smooth, we may assume in addition that $u'\neq u'_{\sigma}$.  
 It is now easy to see that $\{w,u+u'_{\sigma}\}\subseteq \left( P+P'\right) \cap (w+u'_{\sigma} - P') 
 \cap M$, and we conclude applying
 Proposition~\ref{prop:globgen}.
 \end{proof}

The following example shows that Proposition \ref{prop:smooth}
does not hold for arbitrary toric varieties. 

\begin{example}
Let $X, L$ be as in Example \ref{example11}. 
Then the multiplication map 
\[ H^0(X,L^2)\otimes H^0(X,L) \to H^0(X, L^3)\]
is surjective. However $\cM_{L^3}\otimes L$ is not 
globally generated. Indeed, we have
\[ 3P\cap \left( (1,1,1)  - P \right) = \{(1,1,1)\},\]
and so the condition of Proposition~\ref{prop:globgen} is not 
satified for $\sigma$ the cone corresponding to $(0,0,0)$, and 
$u=(1,1,1)\in 3P$.
\end{example}

\section{Restricting toric vector bundles to invariant curves}

We have shown that a toric vector bundle is nef or ample if and only
if its restriction to every invariant curve is nef or ample, and
that Seshadri constants of nef toric vector bundles can be computed
from restrictions to invariant curves, but we have so far avoided
the question of how to compute these restrictions.  In this section,
we show how to compute the splitting type of the restriction of a
toric vector bundle to an invariant curve, using continuous
interpolations of filtrations appearing in Klyachko's classification
of toric vector bundles. In order to do this, we review this
classification as well as the definition of the continuous
interpolations from \cite{Payne}. In this section, unless explicitly
mentioned otherwise, $X$ does not need to be complete. However, in
order to simplify some statements, we always assume that the maximal
cones in $\Delta$ have full dimension ${\rm rk}(N)$.

We systematically use the notation for equivariant line bundles
introduced at the beginning of \S 2.
For every cone $\sigma\in\Delta$, the restriction
$\cE\vert_{U_{\sigma}}$ decomposes as a direct sum of equivariant
line bundles $\cL_1\oplus\cdots\oplus\cL_r$. Moreover, each such
$\cL_i$ is equivariantly isomorphic to some
$\cL_{u_i}\vert_{U_{\sigma}}$, where the class of $u_i$ is uniquely
determined in $M/M\cap\sigma^{\perp}$. If $\sigma$ is a
top-dimensional cone, then in fact the multiset $\{u_1,\ldots,u_r\}$
is uniquely determined by $\cE$ and $\sigma$.

\smallskip

We now consider $T$-equivariant line bundles on invariant curves.

\begin{example}
Suppose that $X$ is complete, and let $C$ be the invariant curve in
$X$ associated to a codimension one cone $\tau$, with $\sigma$
and $\sigma'$ the maximal cones containing $\tau$.  Let $u$ and
$u'$ be linear functions in $M$ that agree on $\tau$.  Then we have
a $T$-equivariant line bundle $\cL_{u,u'}$ on the union $U_\sigma
\cup U_{\sigma'}$, obtained by gluing $\cL_{u}\vert_{U_{\sigma}}$
and $\cL_{u'}\vert_{U_{\sigma'}}$ using the transition function
$\chi^{u-u'}$, which is regular and invertible on $U_\tau$. In the
above we implicitly order $\sigma$ and $\sigma'$, but we hope that
this will not create any confusion.
 The
underlying line bundle of $\cL_{u,u'}\vert_C$ is $\cO(m)$, where $u
- u'$ is $m$ times the primitive generator of $\tau^\perp$ that is
positive on $\sigma$. It is easy to see that every equivariant line
bundle on $U_{\sigma} \cup U_{\sigma'}$ is equivariantly isomorphic
to some $\cL_{u,u'}$. Note that one can similarly define
$\cL_{u,u'}$ for \emph{any} two top-dimensional cones $\sigma$ and
$\sigma'$, if $u-u'\in (\sigma\cap\sigma')^{\perp}$.
\end{example}

\begin{lemma} \label{T line bundles}
With the notation in the above example, every $T$-equivariant line
bundle on $C=V(\tau)$ is equivariantly isomorphic to
$\cL_{u,u'}\vert_C$ for some unique pair of linear functions $u$ and
$u'$ that agree on $\tau$.
\end{lemma}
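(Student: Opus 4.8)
The plan is to reduce everything to the two $T$-invariant affine charts of $C\simeq\PP^1$ and then glue. Write $\sigma,\sigma'$ for the two maximal cones containing $\tau$, with associated fixed points $x_\sigma,x_{\sigma'}\in C$, and set $C_\sigma=C\cap U_\sigma$ and $C_{\sigma'}=C\cap U_{\sigma'}$; these are $T$-invariant affine lines meeting along the single codimension-one orbit $O_\tau=C\cap U_\tau\simeq k^*$. Given an equivariant line bundle $\cF$ on $C$, I would first analyze it on each chart. Since $C_\sigma$ is an affine toric variety with trivial Picard group, $\cF\vert_{C_\sigma}$ is equivariantly trivial, hence generated by a nowhere-vanishing eigensection; as its only invertible regular functions are the nonzero scalars, such an equivariant structure is recorded precisely by the weight $u\in M$ of the fiber at the $T$-fixed point $x_\sigma$, and every weight occurs, since $\cL_u\vert_{C_\sigma}$ realizes weight $u$ (this is the affine-chart classification recalled for $U_\sigma$ in \S 2, applied to $C_\sigma$). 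Thus $\cF\vert_{C_\sigma}\simeq \cL_u\vert_{C_\sigma}$ and $\cF\vert_{C_{\sigma'}}\simeq \cL_{u'}\vert_{C_{\sigma'}}$ for unique $u,u'\in M$, namely the weights of $\cF$ at $x_\sigma$ and $x_{\sigma'}$.

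Next I would pin down the compatibility condition and the gluing on the overlap. Fixing eigensection generators $s_\sigma,s_{\sigma'}$ of $\cF$ over the two charts, of weights $u$ and $u'$ respectively, on $O_\tau$ both are nowhere zero, so $s_\sigma=g\,s_{\sigma'}$ for some invertible eigenfunction $g$ on $O_\tau\simeq k^*$; comparing weights shows that $g$ has weight $u-u'$. Because the character lattice of the torus $O_\tau$ is exactly $\tau^\perp\cap M$, the weight of any invertible function on $O_\tau$ lies in $\tau^\perp$, whence $u-u'\in\tau^\perp$, i.e.\ $u$ and $u'$ agree on $\tau$ --- which is exactly the hypothesis under which $\cL_{u,u'}$ is defined in the Example. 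After rescaling $s_{\sigma'}$ by the leading scalar of $g$, I may assume $g=\chi^{u-u'}\vert_{O_\tau}$, which is precisely the transition function used there. Matching the two local trivializations and this gluing yields an equivariant isomorphism $\cF\simeq \cL_{u,u'}\vert_C$, proving existence.

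For uniqueness I would use the fixed-point weights. By construction $\cL_{u,u'}\vert_{C_\sigma}=\cL_u\vert_{C_\sigma}$, so $T$ acts on the fiber of $\cL_{u,u'}\vert_C$ at $x_\sigma$ by $\chi^{u}$, and likewise by $\chi^{u'}$ at $x_{\sigma'}$ (using that $T$ acts on the fiber of $\cL_w$ at a maximal-cone fixed point by $\chi^w$, as noted in \S 2). Since equivariantly isomorphic line bundles have identical fiber characters at each $T$-fixed point, an isomorphism $\cL_{u,u'}\vert_C\simeq\cL_{v,v'}\vert_C$ forces $u=v$ and $u'=v'$; thus the pair $(u,u')$ is determined by $\cF$.

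The only real content beyond bookkeeping is the compatibility step: forcing the transition weight $u-u'$ into $\tau^\perp$. This is where I crucially use that the two charts of $C$ overlap in the \emph{single} codimension-one orbit $O_\tau$, whose character lattice is $\tau^\perp\cap M$; everything else is the trivial-Picard-group argument on the affine line together with care over the sign convention for the $T$-action on functions fixed at the beginning of \S 2. I expect keeping those sign conventions consistent (weights of $\chi^{-u}$ versus fiber characters) to be the main place where a routine-looking calculation could go astray.
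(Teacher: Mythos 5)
Your proof is correct and takes essentially the same approach as the paper's: equivariant trivialization over each chart $C\cap U_\sigma$ and $C\cap U_{\sigma'}$ by a nowhere-vanishing isotypical section, gluing over the overlap by a constant multiple of a character, and uniqueness from the weights of the $T$-action on the fibers at the two fixed points. The only cosmetic differences are that the paper produces the nonvanishing section directly (an isotypical section nonzero at $x_\sigma$ has closed, $T$-invariant, hence empty, zero locus) rather than quoting the affine-chart classification, and it leaves implicit the check $u-u'\in\tau^\perp$, which you spell out via the character lattice of $O_\tau$.
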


\begin{proof}
Suppose $\cL$ is a $T$-equivariant line bundle on $V(\tau)$.  On the
affine piece $C \cap U_\sigma$ we can choose an isotypical section
$s$ which is nonzero at the $T$-fixed point $x_\sigma$. Then the
locus where $s$ vanishes is closed, $T$-invariant, and does not
contain $x_\sigma$, and hence is empty. Therefore, $s$ gives an
equivariant trivialization that identifies $\cL$ with $\cO(\divisor
\chi^u)$ over $C \cap U_\sigma$, where $u$ is the isotypical type of
$s$. Similarly, there is an isotypical section $s'$ that identifies
$\cL$ with $\cO(\divisor \chi^{u'})$ over $C \cap U_{\sigma'}$. Then
there is a nonzero constant $c \in k^*$ such that $cs' = \chi^u
\cdot s$ over $U_\tau$, and it follows that $\cL$ is equivariantly
isomorphic to $\cL_{u, u'}\vert_C$.  Uniqueness follows from the
fact that $T$ acts on the fibers of $\cL$ over the fixed points
$x_\sigma$ and $x_{\sigma'}$ by the characters $\chi^u$ and
$\chi^{u'}$, respectively.
\end{proof}

\noindent  The lemma implies that the $T$-equivariant Picard group
of $V(\tau)$ is naturally isomorphic to the subgroup of $M \times M$
consisting of those pairs $(u,u')$ such that $u - u'$ vanishes on
$\tau$.

In order to describe the splitting type of the restriction of an
equivariant vector bundle $\cE$ on $X$ to an invariant curve, we
start by recalling the continuous interpolations of the filtrations
appearing in Klyachko's classification of toric vector bundles.
 See \cite{Klyachko} and \cite[Section~2]{Payne2} for proofs and further details.

Suppose that $X$ is a toric variety such that all maximal cones in
the fan have dimension ${\rm rk}(N)$. Given a toric vector bundle
$\cE$ on $X$, let $E$ denote the fiber of $\cE$ at the identity of
the torus $T$. For every cone $\sigma$ in $\Delta$, and for every
$u\in M$, evaluating sections at the identity gives an injective map
$\Gamma(U_{\sigma},\cE)_u\hookrightarrow E$. We denote by
$E^{\sigma}_u$ the image of this map.

Given $v\in |\Delta|$ and $t\in \RR$, we define a vector subspace
$E^v(t)\subseteq E$, as follows. Choose a cone $\sigma$ containing
$v$, and let
\[ E^v(t) = \sum_{\<u,v\> \geq t} E^\sigma_u.
\]
It is clear that for a fixed $v\in |\Delta|$, the vector subspaces
$\{E^v(t)\}_{t\in\RR}$ give a decreasing filtration of $E$ indexed
by real numbers.

This filtration can be described more explicitly as follows. Suppose
that $u_1,\ldots,u_r\in M$ are such that
$$\cE\vert_{U_{\sigma}}\simeq\bigoplus_{i=1}^r\cL_{u_i}\vert_{U_{\sigma}}.$$
If $L_i\subseteq E$ is the fiber of $\cL_{u_i}$ at the identity in
$T$, then we get a decomposition $E=L_1\oplus\cdots\oplus L_r$. With
this notation, $E^{\sigma}_u$ is spanned by those $L_i$ for which
$u_i-u\in \sigma^{\vee}$. It is easy to deduce from this that
\begin{equation}\label{description_E}
E^v(t)=\bigoplus_{\langle u_i,v\rangle\geq t}L_i.
\end{equation}
This description implies in particular that the definition of
$E^v(t)$ is independent of the choice of $\sigma$.

In addition, the above description shows that the filtrations we
have defined are ``piecewise-linear" with
 respect to $\Delta$, in the sense that for every maximal cone $\sigma$ in $\Delta$ there is a decomposition $E = \bigoplus_{u \in M} E_u$ such that
\[
E^v(t) = \bigoplus_{\<u, v\> \geq t} E_u
\]
for every $v \in \sigma$ and every real number $t$. Indeed, with the
notation in (\ref{description_E}), it is enough to take $E_u$ to be
the direct sum of the $L_j$ for which $u_j=u$.

\medskip

For every cone $\sigma$, if $u$, $u'\in M$ are such that
$u'\in\sigma^{\vee}$, multiplication by $\chi^{u'}$ induces an
inclusion
$$E^{\sigma}_u\subseteq E^{\sigma}_{u-u'}.$$
In particular, we have $E^{\sigma}_u=E^{\sigma}_{u-u'}$ if
$u'\in\sigma^{\perp}$. If $v_{\rho}$ is the primitive generator of a
ray $\rho$ in $\Delta$, and $i\in\ZZ$, we write $E^{\rho}(i)$ for
$E^{v_{\rho}}(i)$. It follows from the previous discussion that if
$u\in M$ is such that $\langle u,v_{\rho}\rangle =i$, then
$E^{\rho}(i)=E^{\rho}_u$.

\begin{KCT}
The category of toric vector bundles on $X(\Delta)$ is naturally
equivalent to the category of finite-dimensional $k$-vector spaces
$E$ with collections of decreasing filtrations
$\{E^\rho(i)\}_{i\in\ZZ}$ parametrized by the rays in $\Delta$, and
satisfying the following compatibility condition: For each maximal
cone $\sigma \in \Delta$, there is a decomposition $E = \bigoplus_{u
\in M} E_{u}$ such that
\[
E^\rho(i) = \sum_{\<u, v_\rho\> \, \geq \, i} E_{u},
\]
 for every ray $\rho$ of $\sigma$ and $i \in \ZZ$.
\end{KCT}

Of course, the equivalence of categories is given by associating to
a toric vector bundle $\cE$ its fiber $E$ over the identity in the
dense torus, with filtrations $E^\rho(i)$ as above. Note that the
filtrations $\{E^v(t)\}_{t\in\RR}$ give continuous interpolations of
the filtrations $\{E^{\rho}(i)\}_{i\in\ZZ}$. They were introduced in
\cite{Payne} to study
 equivariant vector bundles with trivial ordinary total Chern class.

\begin{remark}\label{decomposable}
Let $\cE$ be an equivariant vector bundle on $X$, and
$\{E^{\rho}(i)\}$ the filtrations that appear in the above theorem.
It is easy to see that $\cE$ is equivariantly isomorphic to a direct
sum of equivariant line bundles if and only if there is a
decomposition $E=L_1\oplus\cdots\oplus L_r$, with each $L_j$ a
one-dimensional subspace, and such that each $E^{\rho}(i)$ is a
direct sum of some of the $L_j$. Of course, the $L_j$ are the fibers
of the corresponding line bundles at the identity of $T$.
\end{remark}

We mention one continuity result for these filtrations that we will
need \cite[Lem\-ma~4.7]{Payne}. On the set
\[
\coprod_{\ell} \Gr(\ell,E),
\]
of subspaces of $E$,
 partially ordered by inclusion, consider the poset topology.
A subset  $S \subset \coprod_{\ell} \Gr(\ell,E)$ is closed in this
topology if and only if every subspace of $E$ that contains an
element of $S$ is also in $S$.
 The map taking a point $v \in |\Delta|$ and a real number $t$ to
$E^v(t)$ is a continuous map from $|\Delta| \times \RR$ to
$\coprod_{\ell} \Gr(\ell,E)$.

Evaluation at the identity gives a canonical isomorphism
\[
\Gamma(X, \cE)_u
=\bigcap_{\sigma\in\Delta}\Gamma(U_{\sigma},\cE)_u\xrightarrow{\sim}
\bigcap_{v \in |\Delta|} E^v(\<u,v\>).
\]
The infinite intersection $\bigcap_{v} E^v(\<u,v\>)$ is the same as
the finite intersection $\bigcap_\rho E^\rho(\<u, v_\rho \>)$, but
the advantage of working with the $\RR$-graded interpolations is
that it allows us to use continuity and convexity to find global
sections, generalizing standard convexity arguments for constructing
isotypical global sections of toric line bundles, as in
\cite[Section~3.4]{Fulton}.
 It also facilitates the computation of the restriction of $\cE$ to an invariant curve, as we will see below.

\bigskip

From now on we assume that $X$ is complete. Our next goal is to
describe the equivariant vector bundles on the invariant curves in
$X$. Recall that a well-known result due to Grothendieck says that
every vector bundle on $\PP^1$ splits as a sum of line bundles. It
does not follow tautologically that every $T$-equivariant vector
bundle splits equivariantly as a
 sum of line bundles, but this has been shown by Kumar \cite{Kumar} and may also be deduced from \cite[Example~2.3.3 and Section~6.3]{Klyachko}.
For the reader's convenience, we give below a direct argument based on
Klyachko's Classification Theorem.  We start with the following lemma.

\begin{lemma}\label{intersection_Borel}
Given two flags $\Fl$ and $\Fl'$ of subspaces in a finite
dimensional vector space $V$, there is a decomposition
$V=L_1\oplus\cdots\oplus L_r$, with all $L_i$ one-dimensional, such
that every subspace in either $\Fl$ or $\Fl'$ is a direct sum of
some of the $L_i$.
\end{lemma}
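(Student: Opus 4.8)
The plan is to produce a single basis of $V$ that is simultaneously adapted to both flags; setting each $L_i$ to be the line spanned by one basis vector then gives the desired decomposition, since a subspace ``adapted'' to the basis is by definition a direct sum of the corresponding $L_i$. First I would reduce to the case where both $\Fl$ and $\Fl'$ are \emph{complete} flags, $0 = F_0 \subset F_1 \subset \cdots \subset F_n = V$ and $0 = F'_0 \subset \cdots \subset F'_n = V$ with $\dim F_i = \dim F'_i = i$. This reduction is harmless: any flag refines to a complete one, and a basis adapted to the refinements is automatically adapted to the original coarser flags, because each subspace in $\Fl$ or $\Fl'$ then occurs among the $F_i$ or $F'_j$.

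Next, for each $i \in \{1, \ldots, n\}$ I would record the position of the $i$-th step of $\Fl$ relative to $\Fl'$. Since $F_i / F_{i-1}$ is one-dimensional, the filtration it inherits from $\Fl'$ jumps exactly once; concretely, let $j(i)$ be the least index $j$ with $F_i \cap F'_j \not\subseteq F_{i-1}$. This is well defined and at most $n$, since $F'_n = V$ forces $F_i \cap F'_n = F_i \not\subseteq F_{i-1}$. I would then choose a vector $e_i \in (F_i \cap F'_{j(i)}) \smallsetminus F_{i-1}$, which exists by minimality of $j(i)$, and set $L_i = k \cdot e_i$.

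It remains to verify that $e_1, \ldots, e_n$ is a basis with the required adaptation. Adaptation to $\Fl$ is immediate: each $e_i \in F_i \smallsetminus F_{i-1}$, so the $e_i$ are triangular with respect to $\Fl$, hence linearly independent, and $F_k = \langle e_1, \ldots, e_k\rangle$ for every $k$ by comparing dimensions. Adaptation to $\Fl'$ is the heart of the matter and the step I expect to require the most care, since it is where the interaction between the two flags must be controlled. By construction $e_i \in F'_{j(i)}$, so $\langle e_i : j(i) \leq m\rangle \subseteq F'_m$; I would upgrade this to equality by a dimension count. Filtering $F'_m$ by its intersections with $\Fl$ gives
\[
\dim F'_m = \sum_{i=1}^n \dim \frac{F'_m \cap F_i}{F'_m \cap F_{i-1}},
\]
where each summand is $0$ or $1$, equal to $1$ exactly when $F'_m \cap F_i \not\subseteq F_{i-1}$, i.e. precisely when $j(i) \leq m$. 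Hence $\#\{i : j(i) \leq m\} = \dim F'_m = m$ for every $m$, which forces $i \mapsto j(i)$ to be a bijection and shows that $\langle e_i : j(i) \leq m\rangle$ already has dimension $m$; being contained in the $m$-dimensional space $F'_m$, it must equal $F'_m$. Thus every subspace of $\Fl'$ is the direct sum of the lines $L_i$ it contains, and together with the adaptation to $\Fl$ established above this completes the proof.
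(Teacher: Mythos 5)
Your proof is correct, but it takes a genuinely different route from the paper's. After the same initial reduction to complete flags, the paper invokes a structural theorem from algebraic group theory: the intersection of two Borel subgroups of a linear algebraic group contains a maximal torus (citing Digne--Michel, Cor.~1.5), so the common stabilizer of the two flags contains the stabilizer of a collection of lines $L_1,\ldots,L_r$, and these lines do the job. You instead construct a common adapted basis by hand: the index $j(i)$ records the relative position of the two flags (in effect you build the permutation of the Bruhat decomposition attached to the pair of flags), and the telescoping dimension count $\dim F'_m = \#\{i : j(i)\le m\}$ simultaneously shows that $i\mapsto j(i)$ is a bijection and upgrades the inclusion $\langle e_i : j(i)\le m\rangle \subseteq F'_m$ to an equality. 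All the steps check out: the $e_i$ are triangular with respect to $\Fl$, hence linearly independent; each quotient $(F'_m\cap F_i)/(F'_m\cap F_{i-1})$ embeds in the line $F_i/F_{i-1}$, so the summands are $0$ or $1$; and the summand is $1$ exactly when $j(i)\le m$ by the minimality in the definition of $j(i)$. As for what each approach buys: the paper's proof is shorter and places the lemma in its natural conceptual home, but it imports a nontrivial theorem and implicitly uses the ambient hypothesis that $k$ is algebraically closed; your proof is self-contained, purely linear-algebraic, valid over an arbitrary field, and makes the underlying combinatorics of the two-flag configuration explicit.
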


\begin{proof}
After refining the two flags, we may assume that both $\Fl$ and
$\Fl'$ are complete flags. Recall that the intersection of two Borel
subgroups in a linear algebraic group contains a maximal torus (see
\cite{DM}, Cor. 1.5). Therefore the intersection of the stabilizers
of the two flags contains the stabilizer of a collection of
one-dimensional subspaces $L_1,\ldots,L_r$. These subspaces satisfy
our requirement.
\end{proof}

\begin{corollary}\label{P1}
Let $X$ be a complete toric variety. Any $T$-equivariant vector
bundle $\cE$ on the invariant curve $C=V(\tau)$ splits equivariantly
as a sum of line bundles
\[
\cE \cong \cL_{u_1, u'_1}\vert_C \oplus \cdots \oplus \cL_{u_r,
u'_r}\vert_C.
\]
\end{corollary}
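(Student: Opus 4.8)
The plan is to apply Klyachko's Classification Theorem to the complete toric curve $C=V(\tau)$, which reduces the splitting of $\cE$ to a statement about two filtrations on a single vector space, and then to invoke Lemma~\ref{intersection_Borel}. Concretely, $C\cong\PP^1$ has exactly two $T$-fixed points $x_\sigma$ and $x_{\sigma'}$ and a dense orbit $O_\tau$ whose points have stabilizer the subtorus $T_\tau$ corresponding to the linear span of $\tau$. Let $E$ be the fiber of $\cE$ over a chosen point of $O_\tau$; then $T_\tau$ acts on $E$. The fan of $C$ has precisely two rays $\rho,\rho'$ (the images of $\sigma,\sigma'$), so by Klyachko's Classification Theorem the equivariant bundle $\cE$ is encoded by $E$ together with two decreasing filtrations $\{E^\rho(i)\}$ and $\{E^{\rho'}(i)\}$; the compatibility condition is automatic here, since in a one-dimensional fan each maximal cone is a single ray and any decreasing filtration of a finite-dimensional space is split by a grading. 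These two filtrations determine two flags $\Fl$ and $\Fl'$ in $E$.

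First I would feed $\Fl$ and $\Fl'$ into Lemma~\ref{intersection_Borel} to produce a decomposition $E=L_1\oplus\cdots\oplus L_r$ into one-dimensional subspaces such that every term $E^\rho(i)$ and $E^{\rho'}(i)$ is a direct sum of some of the $L_j$. To obtain a splitting that is equivariant for the \emph{full} torus $T$, and not merely for $T/T_\tau$, I would choose the $L_j$ to be $T_\tau$-eigenlines as well: the flags $\Fl$ and $\Fl'$ are $T_\tau$-stable, so they induce flags on each $T_\tau$-weight space of $E$, and applying Lemma~\ref{intersection_Borel} within each weight space yields lines that are simultaneously flag-compatible and $T_\tau$-eigenlines. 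Taking the direct sum over all weight spaces gives a decomposition $E=\bigoplus_j L_j$ compatible with both filtrations and with the $T_\tau$-action. By the criterion of Remark~\ref{decomposable} (in its evident form for $C$), this corresponds to an equivariant splitting $\cE\cong\cL_1\oplus\cdots\oplus\cL_r$ into $T$-equivariant line bundles on $C$, and Lemma~\ref{T line bundles} identifies each $\cL_j$ with some $\cL_{u_j,u'_j}\vert_C$ for a unique pair $(u_j,u'_j)$ agreeing on $\tau$, which is the asserted decomposition.

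The genuinely substantive linear-algebra input is the simultaneous diagonalization of two flags, and this is already supplied by Lemma~\ref{intersection_Borel}; the main obstacle is rather the bookkeeping of the full $T$-action. Klyachko's data for the curve records directly only the rank-one weights of $T/T_\tau$ (equivalently, the gluing datum $u-u'\in\tau^\perp$, i.e.\ the degree $m$), whereas the statement requires the full pairs $(u_j,u'_j)\in M\times M$, whose difference of common restrictions to $\tau$ encodes the residual $T_\tau$-weights. Ensuring that the splitting can be taken to respect these $T_\tau$-weights is precisely the role of the weight-space refinement above; without it one would only get a splitting equivariant for the quotient torus. Once that is in place, the identification via Lemma~\ref{T line bundles} is immediate.
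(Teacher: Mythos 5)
Your proposal is correct and follows essentially the same route as the paper: both arguments rest on Lemma~\ref{intersection_Borel} together with the criterion of Remark~\ref{decomposable} and conclude via Lemma~\ref{T line bundles}, and both handle the discrepancy between $T$ and the dense torus of $C$ by an isotypical decomposition for the stabilizer torus. The only difference is organizational: the paper first splits the bundle $\cE$ itself into isotypical summands for the stabilizer torus $T'$ and then splits each summand using the two flags, whereas you perform the same weight-space decomposition on the fiber $E$ and its two flags before invoking the lemma --- the two versions are equivalent.
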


\begin{proof}
By Lemma~\ref{T line bundles}, it is enough to show that $\cE$
decomposes as a direct sum of $T$-equivariant line bundles. If
$X=\PP^1$, this is clear: it is enough to apply the criterion in
Remark~\ref{decomposable} and Lemma~\ref{intersection_Borel}. The
general case reduces easily to this one: the exact sequence
$$0\to M\cap\tau^{\perp}\to M\to M/M\cap\tau^{\perp}\to 0$$
induces an exact sequence of tori
$$0\to T'\to T\to T_C\to 0,$$
where $T_C$ is the dense torus in $C$, and $T'={\rm
Spec}(k[M/M\cap\tau^{\perp}])$. We choose a splitting of $T\to T_C$.
Since $T'$ acts trivially on $C$, it follows that we have a
decomposition $\cE=\cE_1\oplus\cdots\oplus\cE_m$, where each $\cE_i$
is a $T_C$-equivariant bundle, and $T'$ acts on $\cE_i$ via a character
$\chi_i$. We can decompose each $\cE_i$ as a direct sum of
$T_C$-equivariant line bundles, and each of these is, in fact, a
$T$-equivariant subbundle of $\cE$.
\end{proof}

We will see below that the pairs $(u_i,u'_i)$ that appear in the
above corollary are uniquely determined by $\cE$ (up to reordering).
We first give a condition for an analogue of the corollary to hold
in a suitable neighborhood of the invariant curve. More precisely,
let $V(\tau)$ be the invariant curve corresponding to a codimension
one cone $\tau$ in $\Delta$, and let $\sigma$ and $\sigma'$ be the
maximal cones containing $\tau$.  If the restriction of $\cE$ to
$U_\sigma \cup U_{\sigma'}$ splits as a certain sum of line bundles
$\cL_{u,u'}$, then the restriction of $\cE$ to $V(\tau)$ has the
same splitting type, tautologically.  But the restriction of a toric
vector bundle to $U_\sigma \cup U_{\sigma'}$ need not split as a sum
of line bundles, even for a rank two bundle on a toric surface.

\begin{example}[Tangent bundle on $\PP^2 \smallsetminus \mathrm{pt}$]
Let $\sigma_1$ and $\sigma_2$ be two maximal cones in the fan
defining $X=\PP^2$, and let $\cE=T_X$ be the tangent bundle of $X$.
If $U=U_{\sigma_1}\cup U_{\sigma_2}$, then $\cE\vert_U$ does not
split as a sum of line bundles, even if we disregard the equivariant
structure. Indeed, note first that since the complement of $U$ is a
point, it has codimension two in $\PP^2$. In particular,
$\Pic(U)\simeq\Pic(\PP^2)$, and for every vector bundle $\cF$ on
$\PP^2$, we have $\Gamma(\PP^2,\cF)\simeq\Gamma(U,\cF)$.  If $\cE\vert_U$ is decomposable, then it has to be isomorphic to
$\cO(a)\oplus\cO(b)\vert_U$. Restricting to a line that is contained
in $U$, we then see that we may take $a=2$ and $b=1$. On the other
hand
$$h^0(U,\cO(2)\oplus\cO(1))=h^0(\PP^2,\cO(2)\oplus\cO(1))=9,$$
while $h^0(U,\cE)=h^0(\PP^2,T_{\PP^2})=8$, a contradiction.
\end{example}

\noindent  However, we have the following combinatorial condition
that guarantees the restriction of a toric vector bundle to the
union of two invariant affine open subvarieties splits. Given the
equivariant vector bundle $\cE$ on $X$ and $v \in N_\RR$, let
$\Fl(v)$ be the partial flag of subspaces of $E$ appearing in the
filtration $E^v(t)$.

\begin{proposition} \label{two affines}
Let $\sigma$ and $\sigma'$ be two maximal cones in $\Delta$. If
$\Fl(v)$ is constant on the relative interiors of $\sigma$ and
$\sigma'$, then the restriction of $\cE$ to $U_\sigma \cup
U_{\sigma'}$ splits equivariantly as a sum of line bundles.
\end{proposition}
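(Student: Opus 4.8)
The plan is to use Klyachko's Classification Theorem together with the combinatorial hypothesis to produce a single splitting $E = L_1 \oplus \cdots \oplus L_r$ that is simultaneously compatible with \emph{all} the relevant filtrations, and then to invoke Remark~\ref{decomposable} (in a local form) to conclude that $\cE$ splits equivariantly over $U_\sigma \cup U_{\sigma'}$. By Corollary~\ref{P1} we already know $\cE$ splits over the curve $V(\tau)$; the content of the proposition is that under the hypothesis this splitting propagates to the open neighborhood $U_\sigma \cup U_{\sigma'}$.

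First I would reduce the data to something finite. The compatibility condition in Klyachko's theorem says that for each of the two maximal cones there is a decomposition $E = \bigoplus_{u} E_u^\sigma$ (respectively $E = \bigoplus_u E_u^{\sigma'}$) inducing all the filtrations $E^\rho(i)$ for rays $\rho$ of that cone. The piecewise-linearity discussion shows that $\Fl(v)$, the flag cut out by the filtration $E^v(t)$ as $t$ varies, is locally constant on the relative interior of each maximal cone; the hypothesis asserts it is \emph{globally} constant on $\mathrm{int}(\sigma)$ and on $\mathrm{int}(\sigma')$. So the entire filtration data over $U_\sigma \cup U_{\sigma'}$ is controlled by just two flags: $\Fl(\sigma)$ coming from a generic $v$ in $\sigma$, and $\Fl(\sigma')$ coming from a generic $v$ in $\sigma'$. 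The key point is that once $\Fl(v)$ is constant on the relative interior of $\sigma$, the decomposition $E = \bigoplus_u E_u^\sigma$ is essentially forced to be compatible with that one flag, and similarly for $\sigma'$.

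Next I would apply Lemma~\ref{intersection_Borel} to the two flags $\Fl(\sigma)$ and $\Fl(\sigma')$ in $E$. This yields a decomposition $E = L_1 \oplus \cdots \oplus L_r$ into lines such that every subspace appearing in either flag is a direct sum of some of the $L_i$. Because $\Fl(v)$ is constant on the interior of each cone, every subspace $E^\rho(i)$ for a ray $\rho$ of $\sigma$ or $\sigma'$ appears as a member of $\Fl(\sigma)$ or $\Fl(\sigma')$ (the filtration along a boundary ray is a limit of the interior flag, and by the continuity result for $v \mapsto E^v(t)$ together with the constancy hypothesis, the boundary flags are refined by the interior ones). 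Hence each $L_i$ is compatible with all of the filtrations attached to rays of $\sigma$ and of $\sigma'$. I would then check that each $L_i$, equipped with the induced filtrations, satisfies Klyachko's compatibility condition on the subfan consisting of $\sigma$, $\sigma'$, and their faces, so that by the equivalence of categories each $L_i$ corresponds to an equivariant line subbundle, exhibiting $\cE\vert_{U_\sigma \cup U_{\sigma'}}$ as their direct sum; the assignment of weights $(u_i, u'_i)$ to $L_i$ is read off from how $L_i$ sits in the two decompositions $\bigoplus_u E_u^\sigma$ and $\bigoplus_u E_u^{\sigma'}$, giving line bundles of the form $\cL_{u_i, u'_i}$.

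The main obstacle I expect is verifying that the single decomposition $E = \bigoplus L_i$ produced by Lemma~\ref{intersection_Borel} genuinely satisfies the Klyachko compatibility condition over the \emph{whole} neighborhood $U_\sigma \cup U_{\sigma'}$, and not merely that it refines the two generic flags. Compatibility of a line $L_i$ requires that for each maximal cone there be an $M$-grading realizing all the ray filtrations as coordinate subspaces; I have such a grading on $E$ for $\sigma$ and for $\sigma'$ separately, and I must confirm these gradings restrict consistently to the $L_i$ and agree on the common boundary $\tau$. This is exactly where the hypothesis that $\Fl(v)$ is constant on each relative interior does the essential work: it guarantees that the ray filtrations do not jump as $v$ moves across the interior, so that the $L_i$ respect every ray filtration simultaneously and the two local gradings can be chosen compatibly. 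Making this last compatibility precise — rather than merely flag-refinement — is the crux of the argument.
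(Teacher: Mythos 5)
Your proof follows essentially the same route as the paper's: apply Lemma~\ref{intersection_Borel} to the two flags attached to interior points of $\sigma$ and $\sigma'$, use the continuity of the interpolations to see that every subspace $E^\rho(i)$, for $\rho$ a ray of $\sigma$ or of $\sigma'$, already occurs in the corresponding interior flag, and conclude via the splitting criterion. The one step you flag as ``the crux''---verifying Klyachko compatibility for the lines $L_i$---needs no further work: Remark~\ref{decomposable}, applied to the fan consisting of $\sigma$, $\sigma'$ and their faces, states precisely that a decomposition of $E$ into lines refining all the ray filtrations yields an equivariant splitting, which is exactly what the paper invokes at this point.
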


\begin{proof}
Let $\Fl$ and $\Fl'$ be the partial flags in $E$ associated to
points in the relative interiors of $\sigma$ and $\sigma'$,
respectively. It follows from Lemma~\ref{intersection_Borel} that
there is a splitting $E = L_1 \oplus \cdots \oplus L_r$ such that
every subspace appearing in
 $\Fl$ and $\Fl'$ is a sum of some of the $L_i$.

Now, for any ray $\rho$ of $\sigma$, every subspace appearing in the
filtration $E^\rho(i)$ is in $\Fl$, by the continuity of the
interpolations \cite[Lemma~4.7]{Payne}.  Similarly, if $\rho'$ is a
ray of $\sigma'$ then every subspace $E^{\rho'}(i)$ is in $\Fl'$. In
particular, each of these subspaces is the sum of some of the $L_j$,
hence by Remark~\ref{decomposable} we deduce that the restriction of
$\cE$ to $U_{\sigma}\cup U_{\sigma'}$ splits as a sum of equivariant
line bundles.
\end{proof}

With $\cE$ fixed, there is a canonical coarsest subdivision
$\Delta'$ of $\Delta$ such that $\Fl(v)$ is constant on the relative
interior of each maximal cone in $\Delta'$, as follows.  Suppose
$\sigma$ is a maximal cone in $\Delta$, and let $u_1, \ldots, u_r$
in $M$ be such that
$$\cE\vert_{U_{\sigma}}\simeq\bigoplus_{i=1}^r\cL_{u_i}\vert_{U_{\sigma}}.$$
It follows from (\ref{description_E}) that if $L_i\subseteq E$ is
the subspace corresponding to $\cL_{u_i}$, then
 $$E^v(t) = \bigoplus_{\<u_i, v \> \geq t } L_i$$ for every $v$ in $\sigma$ and $t$ in $\RR$.
Hence $\Fl(v)$ is constant on the interior of a top dimensional cone
contained in $\sigma$ if and only if this interior does not meet any
of the hyperplanes $(u_i-u_j)^{\perp}$, with $u_i\neq u_j$.
Therefore the maximal cones of $\Delta'$ are
 exactly the closures of all chambers of $\sigma$ lying in the complement of the above hyperplane arrangement,
 for all maximal cones $\sigma$ in $\Delta$.

Note that we have a proper birational toric morphism $p: X'
\rightarrow X$
 associated to this subdivision, where $X' = X(\Delta')$, and the restriction of $p^* \cE$ to any union of two invariant affine open subvarieties splits
  equivariantly as a sum of line bundles $\cL_{u,u'}$.  For any invariant curve $C$ in $X$, we can choose an invariant
  curve $C'$ in $X'$ projecting isomorphically onto $C$, and the splitting type of $\cE|_C$ is tautologically the same
  as the splitting type of $p^* \cE |_{C'}$, which we can compute more easily.

\medskip

We now assume that $\cE$ and $V(\tau)$ satisfy the hypothesis of
Proposition~\ref{two affines}. By the proposition, we can find a
multiset $\bu_C \subset M \times M$ such that
\[
\cE\vert_U \simeq \bigoplus_{(u,u') \in \bu_C} \cL_{u,u'},
\]
where $U=U_{\sigma}\cup U_{\sigma'}$. The following lemma relates
$\bu_C$ to the filtrations on $E$ corresponding to $\sigma$ and
$\sigma'$.

\begin{lemma} \label{intersections}
Let $C$ be the invariant curve corresponding to the intersection of two adjacent maximal cones $\sigma$ and $\sigma'$. If $\Fl(v)$ is constant on the
interiors of $\sigma$ and $\sigma'$, then
\begin{equation}\label{formula 20}
\dim \big( E^v(t) \cap E^{v'}(t') \big) = \# \{(u,u') \in \bu_C \ \mid \ \<u,v \> \geq t \mbox{ and } \<u',v' \> \geq t' \}
\end{equation}
for any $v \in \sigma$ and $v' \in \sigma'$, and for any real
numbers $t$ and $t'$.
\end{lemma}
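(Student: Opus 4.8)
The plan is to exploit the fact that, under the hypothesis of Proposition~\ref{two affines}, the bundle $\cE$ restricted to $U = U_\sigma \cup U_{\sigma'}$ splits equivariantly as $\bigoplus_{(u,u') \in \bu_C} \cL_{u,u'}$, and to reduce the computation of the left-hand side to a diagonal count over this splitting. The key observation is that both filtrations $E^v(\cdot)$ and $E^{v'}(\cdot)$ are \emph{simultaneously} diagonalized by the splitting, since $v$ and $v'$ lie in the interiors of $\sigma$ and $\sigma'$ respectively. First I would fix the line $L_{(u,u')} \subseteq E$ corresponding to each summand $\cL_{u,u'}$, so that $E = \bigoplus_{(u,u') \in \bu_C} L_{(u,u')}$. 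The heart of the matter is to identify, for a point $v$ in the interior of $\sigma$, exactly which of these lines lie in $E^v(t)$.

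To make this identification, I would use the explicit description~(\ref{description_E}) of the filtration together with the compatibility between the splitting over $U$ and the decompositions over each maximal affine chart. Restricting $\cL_{u,u'}$ to $U_\sigma$ gives $\cL_u\vert_{U_\sigma}$, so the line $L_{(u,u')}$ is the fiber at the identity of a summand of type $u$ in the decomposition of $\cE\vert_{U_\sigma}$. By~(\ref{description_E}), for $v \in \sigma$ we therefore have
\[
E^v(t) = \bigoplus_{\<u, v\> \geq t} L_{(u,u')},
\]
where the sum runs over those $(u,u') \in \bu_C$ with $\<u,v\>\geq t$. Symmetrically, restricting to $U_{\sigma'}$ yields $\cL_{u'}\vert_{U_{\sigma'}}$, so for $v' \in \sigma'$ we get $E^{v'}(t') = \bigoplus_{\<u',v'\>\geq t'} L_{(u,u')}$. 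The crucial point enabling both equalities at once is that the \emph{same} splitting $\bigoplus L_{(u,u')}$ of $E$ refines both filtrations; this is precisely what Proposition~\ref{two affines} (via Lemma~\ref{intersection_Borel}) guarantees when $\Fl(v)$ is constant on the two open cones.

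Once both filtration steps are written as direct sums of lines drawn from the common splitting, the intersection $E^v(t) \cap E^{v'}(t')$ is simply the direct sum of those lines $L_{(u,u')}$ satisfying \emph{both} conditions $\<u,v\>\geq t$ and $\<u',v'\>\geq t'$. Its dimension is the number of such pairs, which is exactly the right-hand side of~(\ref{formula 20}). The main obstacle I anticipate is verifying carefully that the $u$ (respectively $u'$) attached to the summand $L_{(u,u')}$ in the global splitting over $U$ agrees with the lattice point governing the filtration $E^v(\cdot)$ for $v \in \sigma$ (respectively $E^{v'}(\cdot)$ for $v' \in \sigma'$); this is a compatibility check between the gluing description of $\cL_{u,u'}$ in the Example preceding Lemma~\ref{T line bundles} and the convention~(\ref{description_E}) defining $E^v(t)$. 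Everything else is bookkeeping with direct sums.
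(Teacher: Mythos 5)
Your proposal is correct and follows essentially the same route as the paper's proof: invoke Proposition~\ref{two affines} to split $\cE\vert_{U_\sigma \cup U_{\sigma'}}$ as $\bigoplus \cL_{u_i,u'_i}$, note that the fibers $L_i$ at the identity simultaneously diagonalize both filtrations via the description~(\ref{description_E}), and count the lines satisfying both inequalities. The compatibility check you flag (that restricting $\cL_{u,u'}$ to $U_\sigma$ gives $\cL_u\vert_{U_\sigma}$, so the same lattice point governs both the gluing datum and the filtration) holds by the very construction of $\cL_{u,u'}$, and the paper uses it implicitly in exactly the same way.
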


\begin{proof}
By Proposition~\ref{two affines}, the restriction of $\cE$ to
$U_\sigma \cup U_{\sigma'}$ splits as a sum of line bundles
 $\cE \simeq \cL_{u_1, u'_1} \oplus \cdots \oplus \cL_{u_r, u'_r}$, where $\bu_C = \{(u_1, u'_1), \ldots, (u_r, u'_r)\}$
 (note that we might have repetitions).
   Let $L_i$ be the fiber of the subbundle $\cL_{u_i, u'_i}$ over the identity.  Then
\[
E^v(t) = \bigoplus_{\<u_i,v\> \geq t} L_i \mbox{ \ \ and \ \ }
E^{v'}(t') = \bigoplus_{\<u'_j,v'\> \geq t'} L_j.
\]
Therefore $E^v(t) \bigcap E^{v'}(t')$ is the sum of those $L_i$ such
that $\<u_i,v\> \geq t$ and $\<u'_i, v' \> \geq t'$. The assertion
in the lemma follows.
\end{proof}

\begin{remark}\label{uniqueness}
Note that the pairs $(u_1,u'_1),\ldots,(u_r,u'_r)$ such that
$$\cE\vert_{U_{\sigma}\cup U_{\sigma'}}\simeq\bigoplus_{i=1}^r\cL_{u_i,u'_i}$$
are unique, up to reordering. This is an easy consequence of
equation (\ref{formula 20}), since the left-hand side of the formula
does not depend on the choice of the pairs $(u_i,u'_i)$. We can
deduce from this also the uniqueness of the decomposition in
Corollary~\ref{P1}.
\end{remark}

\begin{corollary}\label{P2}
If $X$ is a complete toric variety, and if $\cE$ is a
$T$-equivariant vector bundle on the invariant curve $C=V(\tau)$,
then the pairs $(u_i,u'_i)$ such that
\[
\cE \cong \cL_{u_1, u'_1}\vert_C \oplus \cdots \oplus \cL_{u_r,
u'_r}\vert_C
\]
are unique, up to reordering.
\end{corollary}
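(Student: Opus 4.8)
The plan is to show that the multiset of pairs $\{(u_i,u'_i)\}$ appearing in the decomposition is recovered intrinsically from $\cE$, so that any two decompositions as in Corollary~\ref{P1} must agree up to reordering. Existence is already supplied by Corollary~\ref{P1}, and by Lemma~\ref{T line bundles} each summand $\cL_{u_i,u'_i}\vert_C$ is determined by its pair $(u_i,u'_i)$; hence it suffices to pin down the multiset of pairs. Let $\sigma$ and $\sigma'$ be the two maximal cones containing $\tau$, let $E$ be the fiber of $\cE$ over the identity of the dense torus of $C$, and for $v$ in the relative interior of $\sigma$ (resp. $v'$ in that of $\sigma'$) form the filtrations $E^v(t)$ (resp. $E^{v'}(t')$) exactly as in the discussion preceding Klyachko's theorem, using isotypical sections of $\cE$ over $C\cap U_\sigma$ (resp. $C\cap U_{\sigma'}$). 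These filtrations depend only on the equivariant bundle $\cE$ on $C$.

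Next I would rerun the computation in the proof of Lemma~\ref{intersections} verbatim: for any decomposition $\cE\cong\bigoplus_i\cL_{u_i,u'_i}\vert_C$ with fibers $L_i\subseteq E$, one has $E^v(t)=\bigoplus_{\langle u_i,v\rangle\geq t}L_i$ and $E^{v'}(t')=\bigoplus_{\langle u'_i,v'\rangle\geq t'}L_i$, whence formula~(\ref{formula 20}) holds:
\[
\dim\big(E^v(t)\cap E^{v'}(t')\big)=\#\{\,i \mid \langle u_i,v\rangle\geq t \text{ and } \langle u'_i,v'\rangle\geq t'\,\}.
\]
Since the left-hand side is defined purely in terms of $\cE$, the counting function on the right is independent of the chosen decomposition.

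It then remains to argue that this counting function determines the multiset of pairs, which is the only real content beyond Remark~\ref{uniqueness}. Here I would use genericity: given two decompositions with pair-multisets $\{(u_i,u'_i)\}$ and $\{(\tilde u_j,\tilde u'_j)\}$, choose $v$ in the interior of $\sigma$ avoiding every hyperplane $(u-\tilde u)^\perp$ (as $u,\tilde u$ range over first coordinates of pairs in either multiset with $u\neq\tilde u$) and $v'$ in the interior of $\sigma'$ avoiding every $(u'-\tilde u')^\perp$ with $u'\neq\tilde u'$. For such $v,v'$ distinct pairs produce distinct values $(\langle u,v\rangle,\langle u',v'\rangle)\in\RR^2$, so taking successive differences of the common counting function in $t$ and $t'$ isolates, for each candidate pair, exactly its multiplicity; hence all multiplicities agree and the two multisets coincide. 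Varying $v$ over a small neighborhood recovers each $u_i\in M$ (the interior of $\sigma$ spans $N_\RR$ since $\sigma$ is full-dimensional) while keeping its pairing with $u'_i$, confirming that we recover honest lattice-point pairs rather than merely their real images.

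The main obstacle is precisely this last extraction step. The fiberwise $T$-weights at the two fixed points $x_\sigma$ and $x_{\sigma'}$ only produce the two \emph{marginal} multisets $\{u_i\}$ and $\{u'_i\}$ separately, so the essential point is that the intersection dimensions in formula~(\ref{formula 20}) encode the \emph{joint} distribution of the pairs; the work is in checking that this joint data, together with a generic choice of $v$ and $v'$, suffices to separate distinct pairs and read off their multiplicities.
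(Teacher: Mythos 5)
Your proof is correct, and it turns on the same mechanism as the paper's: the intersection-dimension formula (\ref{formula 20}), whose left-hand side is intrinsic to $\cE$, determines the multiset of pairs. The difference is in how you arrange to apply it. The paper reduces to the case $X=\PP^1$ by the torus-splitting argument of Corollary~\ref{P1} (decomposing $\cE$ into isotypical summands for the complementary torus $T'$), precisely because Lemma~\ref{intersections} as stated concerns a bundle defined on $U_\sigma\cup U_{\sigma'}\subseteq X$, with filtrations built from sections over $U_\sigma$, so it does not literally apply to a bundle living only on $C$; after the reduction the hypothesis of the lemma is automatic and Remark~\ref{uniqueness} finishes the argument. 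You instead bypass the reduction by rebuilding the filtrations directly for the $T$-equivariant bundle on $C$ (isotypical sections over $C\cap U_\sigma$, evaluated at the identity of $T_C$) and rerunning the computation of Lemma~\ref{intersections} in that setting. This works; the one point concealed by your word ``verbatim'' is that $\Gamma(C\cap U_\sigma,\cL_{u_i,u'_i}\vert_C)_u$ is nonzero exactly when $u_i-u\in\sigma^\vee\cap\tau^\perp$ (rather than $\sigma^\vee$), which still yields $E^v(t)=\bigoplus_{\<u_i,v\>\geq t}L_i$ for $v\in\sigma$ and hence (\ref{formula 20}). As for what each route buys: the paper's is shorter because it reuses proven statements, though it silently requires tracking how the pairs $(u_i,u'_i)$ are reassembled from the $T'$-characters and the $T_C$-pairs of the summands; yours avoids any choice of splitting $T\cong T_C\times T'$, and it makes explicit the extraction step that Remark~\ref{uniqueness} dismisses as easy --- your generic choice of $v,v'$ followed by double-differencing in $t$ and $t'$ does recover the joint multiset, and your closing observation that the fiber weights at the two fixed points only give the two marginal multisets is exactly the reason the joint data of (\ref{formula 20}) is the essential input.
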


\begin{proof}
We may argue as in the proof of Corollary~\ref{P1} to reduce to the
case when $X=\PP^1$. In this case the hypothesis in
Lemma~\ref{intersections} is clearly satisfied, and we get our
assertion as in the previous remark.
\end{proof}

\section{Sections of nef toric vector bundles and a triviality criterion}

It is well-known that every nef line bundle on a complete toric
variety is globally generated. As Examples~\ref{example11}, \ref{example12}
and \ref{example13} show, we cannot expect the same result to hold in higher rank.
The correct generalization in higher rank is given 
by the following theorem.

\begin{theorem}\label{sections}
If $\cE$ is a nef toric vector bundle on the complete toric variety
$X$, then for every point $x\in X$ there is a section of $\cE$ that
does not vanish at $x$.
\end{theorem}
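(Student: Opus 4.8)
The plan is to reduce the statement to the case where $x$ is a torus-fixed point. Let $B \subseteq X$ be the locus of points at which every global section of $\cE$ vanishes; this is the common zero locus of $H^0(X,\cE)$, hence closed, and it is $T$-invariant because $T$ acts on $H^0(X,\cE)$ compatibly with its action on $\cE$ (if every section vanishes at $x$, then applying $g \in T$ shows every section vanishes at $gx$). Since $X$ is complete, a nonempty closed $T$-invariant subset contains a fixed point. Thus if the theorem failed at some point it would fail at some $x_\sigma$, and it suffices to produce, for each maximal cone $\sigma$, a global section not vanishing at $x_\sigma$.

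\textbf{Reformulation via filtrations.} Fix $\sigma$ and write $\cE\vert_{U_\sigma} \simeq \bigoplus_i \cL_{u_i}\vert_{U_\sigma}$, so that $E = \bigoplus_i L_i$ where $L_i$ is the fiber of $\cL_{u_i}$ and $T$ acts on $L_i$ by $\chi^{u_i}$. As recalled in \S2, these same characters describe the $T$-action on the fiber $\cE \otimes k(x_\sigma)$, and an isotypical local section of weight $u$, lying in $E^\sigma_u = \bigoplus_{u_i - u \in \sigma^\vee} L_i$, evaluates at $x_\sigma$ to its projection onto the graded piece $\bigoplus_{u_i = u} L_i$. Using the canonical isomorphism $H^0(X,\cE)_u \cong \bigcap_{v \in |\Delta|} E^v(\<u,v\>)$, the task becomes: find a weight $u = u_{i_0}$ and a nonzero vector $e$ in $\bigoplus_{u_i = u_{i_0}} L_i$ lying in $\bigcap_v E^v(\<u_{i_0},v\>)$. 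Writing $\phi_e(v) = \max\{t : e \in E^v(t)\}$, and using that the filtration is decreasing, this amounts to finding such an $e$ with $\phi_e(v) \geq \<u_{i_0},v\>$ for every $v \in N_\RR$, with equality forced on $\sigma$.

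\textbf{The engine: nefness through wall-crossings.} The essential input is nefness read off the walls. Across a wall $\tau = \sigma \cap \sigma'$, after passing to the subdivision of Proposition~\ref{two affines} so that $\cE$ splits over $U_\sigma \cup U_{\sigma'}$ as $\bigoplus_k \cL_{u_k,u'_k}$, Theorem~\ref{char_nef} forces $\cL_{u_k,u'_k}\vert_C \simeq \cO(m_k)$ with $m_k \geq 0$; that is, $u_k - u'_k = m_k \ell_\tau$ where $\ell_\tau$ is the primitive generator of $\tau^\perp$ positive on $\sigma$. Hence each summand's weight can only drop by a nonnegative multiple of $\ell_\tau$ as one crosses from $\sigma$ into $\sigma'$, so that its value $\<u'_k, v\> = \<u_k,v\> - m_k\<\ell_\tau,v\>$ satisfies $\<u'_k,v\> \geq \<u_k,v\>$ for $v \in \sigma'$. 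This is the convexity that allows a vector $e$ to remain inside the interpolated filtration, and it is the direct generalization of the construction of isotypical sections of nef toric line bundles, where one takes $u$ to be a vertex of the polytope (as in \cite[Section~3.4]{Fulton}). I would therefore choose $u_{i_0}$ to be a suitably extreme weight at $\sigma$, take $e$ in the corresponding graded piece, and verify $\phi_e \geq \<u_{i_0},\cdot\>$ everywhere, using the continuity of $(v,t) \mapsto E^v(t)$ (closedness in the poset topology, \cite[Lemma~4.7]{Payne}) to propagate the inequality from $\sigma$ across the entire fan.

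\textbf{The main obstacle.} The difficulty is exactly this global propagation, equivalently the correct selection of $u_{i_0}$. The inequality $\phi_e(v) \geq \<u_{i_0},v\>$ holds automatically on $\sigma$ and, by the wall computation above, on every cone adjacent to $\sigma$; but a fixed $e$ generally spreads over several summands in the decompositions attached to distant cones, where $\phi_e$ is only a minimum of linear functions and need not be convex, so a naive wall-by-wall induction does not close. Moreover, since nef toric vector bundles need not be globally generated (Examples~\ref{example11}, \ref{example12} and \ref{example13}), not every weight $u_i$ admits such a lifting, so $u_{i_0}$ cannot be chosen arbitrarily. Overcoming this requires selecting $u_{i_0}$ so that $\<u_{i_0},\cdot\>$ stays below $\phi_e$ after \emph{every} sequence of wall-crossings, treating the whole fan at once rather than one wall at a time; this is where the continuity and convexity of the interpolated filtrations do the decisive work.
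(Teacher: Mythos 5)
Your reduction to fixed points, your reformulation in terms of the interpolated filtrations, and your identification of the engine (nefness across a wall $\tau=\sigma\cap\sigma'$ forces $u_k-u'_k=m_k\ell_\tau$ with $m_k\geq 0$, hence convexity of the weight data along paths) coincide exactly with the paper's opening moves. But the proposal stops precisely where the content of the theorem begins, and your last paragraph concedes as much: you never specify which weight $u_{i_0}$ to take beyond ``suitably extreme,'' and you never prove the required containment $L\subseteq E^v(\<u_{i_0},v\>)$ for all $v\in N_{\RR}$. That global step is not a routine verification to be delegated to ``continuity and convexity''; it is the heart of the proof, and as written your argument has a genuine gap there.

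Here is what fills it. First, the choice: after subdividing $\Delta$ so that $\Fl(v)$ is constant on the interior of every maximal cone, the weights $u_1,\ldots,u_r$ attached to $\sigma$ are totally ordered as linear functions on $\sigma$, and one takes $u=u_1$ to be the \emph{maximal} one, with $L=L_1$ the corresponding line. Second, the propagation: choose $v_0$ interior to $\sigma$ so that the segment $[v_0,v)$ avoids all codimension-two cones and crosses maximal cones $\sigma=\sigma_0,\sigma_1,\ldots,\sigma_s$ through codimension-one walls. Writing $\Psi_i$ for the piecewise linear function on the segment determined by the weights of the $i$-th summand on each $\sigma_j$, nefness makes each $\Psi_i$ convex, and maximality of $u$ gives the seed inequality $\Psi_i(v_0)\leq\<u,v_0\>$ for \emph{every} $i$. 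A convex function that starts below a linear function and later lies above it can never return below it, so the index sets $I_j=\{i \mid \Psi_i\geq u \ \text{on}\ \sigma_j\}$ satisfy $I_j\subseteq I_{j+1}$. After a further subdivision one has $\#I_j=\dim E^{v_j}(\<u,v_j\>)$, and Lemma~\ref{intersections} converts the equality $\#(I_j\cap I_{j+1})=\#I_j$ into the containment $E^{v_j}(\<u,v_j\>)\subseteq E^{v_{j+1}}(\<u,v_{j+1}\>)$; hence $L\subseteq E^{v_0}(\<u,v_0\>)$ rides along the whole segment, and continuity of $(v,t)\mapsto E^v(t)$ handles the endpoint $v$. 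Note that this refutes the claim in your last paragraph: the wall-by-wall induction \emph{does} close, but only if one tracks the full subspaces $E^{v_j}(\<u,v_j\>)$ via dimension counts rather than a fixed vector $e$ and its non-convex function $\phi_e$, and only with the maximal weight as the starting point. These two ideas --- the maximality of $u$ and the subspace-nesting argument through Lemma~\ref{intersections} --- are exactly what your proposal leaves open.
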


\begin{proof}
We will systematically use the notation introduced in \S 5. Note
that if $f\colon X'\to X$ is a proper, birational toric morphism,
then we may replace $X$ and $\cE$ by $X'$ and $f^*\cE$. Indeed,
$f^*\cE$ is nef, and we have an isomorphism
$\Gamma(X,\cE)\simeq\Gamma(X',f^*\cE)$. Hence in order to find a
section of $\cE$ that does not vanish at some $x\in X$, it is enough
to find a section $s'$ of $f^*\cE$ that does not vanish at some
point in the fiber $f^{-1}(x)$. We deduce that after subdividing
$\Delta$, we may assume that $\Fl(v)$ is constant on the interior of
each maximal cone.

Since the space of global sections of $\cE$ has a basis of
$T$-eigensections, the subset of $X$ where all global sections
vanish is closed and $T$-invariant.  Therefore, it will suffice to
prove that $\cE$ has a nonvanishing global section at every fixed
point. Let $x = x_\sigma$ be the fixed point corresponding to a
maximal cone $\sigma$.

Consider $u_1, \ldots, u_r$ in $M$ such that
$$\cE\vert_{U_{\sigma}}
\simeq\bigoplus_{i=1}^r\cL_{u_i}\vert_{U_{\sigma}}.$$ We have seen
that if $L_i\subseteq E$ is the fiber of $\cL_{u_i}$ at the identity
of $T$ then $$E^v(t) = \bigoplus_{\<u_i, v\> \geq t} L_i$$ for all
$v$ in $\sigma$ and $t \in \RR$. By assumption, if $u_i\neq u_j$,
then $u_i-u_j$ does not vanish on the interior of $\sigma$. After
reordering, we may assume $u_1 \geq \cdots \geq u_r$ on $\sigma$.
Let $u = u_1$ and $L = L_1$. There is a $\chi^u$-isotypical section
$s$ of $\cE$ over $U_{\sigma}$ that is nonvanishing at $x$, and
whose value at the identity spans $L$.  We claim that $s$ extends to
a regular section of $\cE$ over all of $X$.
 To prove the claim, we must show that $E^v(\<u,v\>)$ contains $L$ for every $v \in N_\RR$.

 After further subdividing
 $\Delta$,
 we may also assume that for every maximal cone $\sigma'$,
if
$$\cE\vert_{U_{\sigma'}}\simeq\bigoplus_{i=1}^r\cL_{u'_i}\vert_{U_{\sigma'}},$$
then for every $i$ such that $u\neq u'_i$, the linear function
$u-u'_i$ does not vanish on the interior of $\sigma'$.

Let $v$ be a point in $N_\RR$.  Choose $v_0$ in the interior of
$\sigma$ such that the segment $S=[v_0,v)$ is disjoint from the
codimension two cones in $\Delta$, so $[v_0, v)$ passes through a
sequence of maximal cones
\[
\sigma = \sigma_0, \sigma_1, \ldots, \sigma_s
\]
such that $\sigma_{j-1}$ and $\sigma_{j}$ intersect in codimension
one, for $1 \leq j \leq s$.
 Let $v_j$ be a point in the interior of $\sigma_j$, and let $\tau_j = \sigma_j\cap\sigma_{j+1}$.
It is enough to show that $E^{v_s}(\<u,v_s\>)$ contains $L$. Indeed,
if this holds for every $v_s$ in the interior of $\sigma_s$ then we
conclude by the continuity of the interpolation filtrations that
$L\subseteq E^v(\<u,v\>)$ as well. Since $L\subseteq
E^{v_0}(\<u,v_0\>)$ by construction, we see that in order to
conclude it is enough to show that $E^{v_j}(\<u,v_j\>)\subseteq
E^{v_{j+1}}(\<u,v_{j+1}\>)$, for every $0\leq j\leq s-1$.

Let $u_{i,j}\in M$ be such that for $0\leq j\leq s$ we have
$$\cE\vert_{U_{\sigma_j}}\simeq\bigoplus_{i=1}^r\cL_{-u_{i,j}}\vert_{U_{\sigma_j}}.$$
Note that for every $t$ we have
\begin{equation}\label{formula 30}
\dim_k E^{v_j}(t)=\#\{i\mid\<u_{i,j},v_j\>\geq t\}.
\end{equation}
After reordering the $u_{i,j}$, we may assume that $u_{i,0}=u_i$ for
every $i$, and that we have
$$\cE\vert_{U_{\sigma_j}\cup
U_{\sigma_{j+1}}}\simeq\bigoplus_{i=1}^r\cL_{u_{i,j},u_{i,j+1}}$$
for $j\leq s-1$.

We denote by $\Psi_i$ the piecewise linear function on the segment
$S$, that is given on $S\cap\sigma_j$ by $u_{i,j}$. Since
$\cE\vert_{V(\tau_j)}$ is nef, we deduce that $u_{i,j}\geq
u_{i,j+1}$ on $\sigma_j$. This implies that each $\Psi_i$ is convex.

For every $j\in\{0,\ldots,s\}$, let $I_j$ be the set of those $i\leq
r$ such that $\Psi_i\geq u$ on $\sigma_j$. Since
$\<u,v_0\>\geq\Psi_i(v_0)$ for every $i$, and since $\Psi_i$ is
convex, it follows that $I_j\subseteq I_{j+1}$ for $j\leq s-1$. By
assumption, $\<u_{i,j},v_j\>\geq \<u,v_j\>$ if and only if
$u_{i,j}-u$ lies in $\sigma_j^{\vee}$. Therefore (\ref{formula 30})
implies that
$$\# I_j=\dim_kE^{v_j}(\<u,v_j\>).$$
On the other hand, Lemma~\ref{intersections} gives
$$\dim_k\left(E^{v_j}(\<u,v_j\>)\cap E^{v_{j+1}}(\<u,v_{j+1}\>)\right)=\#(I_j\cap I_{j+1})=\# I_j.$$
We conclude that $E^{v_j}(\<u,v_j\>)\subseteq
E^{v_{j+1}}(\<u,v_{j+1}\>)$ for $0\leq j\leq s-1$ and, as we have
seen, this completes the proof.
\end{proof}

\bigskip
\begin{corollary}\label{cor3}
Let $\cE$ be a toric vector bundle on the smooth complete toric
variety $X$.
\begin{enumerate}
\item[i)] Suppose that $\cE$ is ample, and let $x\in X$ be a torus-fixed
point. If $y\neq x$ is another point in $X$ then there is a section $s\in
H^0(X,\cE)$ such that $s(x)=0$ and $s(y)\neq 0$. Moreover, for every
nonzero $v\in T_xX$, there is $s\in H^0(X,\cE)$ such that $x$ lies
in the zero-locus $Z(s)$ of $s$, but $v\not\in T_xZ(s)$.
\item[ii)] If the Seshadri constant $\epsilon(\cE)$ is at least two then, for every distinct torus-fixed points
$x_1,\ldots,x_k$ on $X$, and for every $x$ that is different from
all $x_i$, there is $s\in H^0(X,\cE)$ such that $s(x_i)=0$ for all
$i$, and $s(x)\neq 0$. In particular, $h^0(\cE)\geq k+1$.
\end{enumerate}
\end{corollary}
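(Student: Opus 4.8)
The plan is to deduce both parts from Theorem~\ref{sections} by passing to a blowup at the relevant fixed points and twisting by the exceptional divisors, so that the prescribed vanishing is encoded in the twist while the nonvanishing is furnished by the theorem. For part (i), let $p\colon\widetilde X\rightarrow X$ be the blowup at the fixed point $x$, with exceptional divisor $F$. Since $X$ is smooth and $x$ is $T$-fixed, $\widetilde X$ is again a smooth complete toric variety, and $p^*\cE\otimes\cO(-F)$ is a toric vector bundle that is nef by Corollary~\ref{cor1}. As $p_*\cO(-F)$ is the ideal sheaf of $x$, the projection formula identifies $H^0(\widetilde X, p^*\cE\otimes\cO(-F))$ with the subspace of sections $s\in H^0(X,\cE)$ satisfying $s(x)=0$. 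For the first assertion I would apply Theorem~\ref{sections} at the point $p^{-1}(y)$, which makes sense because $y\neq x$; the resulting section, nonvanishing at $p^{-1}(y)$, descends to a section $s$ of $\cE$ with $s(x)=0$ and $s(y)\neq 0$, since $\cO(-F)$ is trivial near $p^{-1}(y)$.

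The second assertion of part (i) requires reinterpreting nonvanishing along $F$. Here $F\cong\PP(T_xX)$ and $\cO(-F)|_F\cong\cO_{\PP(T_xX)}(1)$, so $(p^*\cE\otimes\cO(-F))|_F\cong\cE_x\otimes\cO_{\PP(T_xX)}(1)$, and restricting a section vanishing at $x$ to $F$ recovers its differential $d_xs\in\Hom(T_xX,\cE_x)=H^0(F,\cE_x\otimes\cO(1))$. I would then apply Theorem~\ref{sections} at the point $[v]\in F$ to obtain a section $s$ with $d_xs(v)\neq 0$. Since $s(x)=0$ forces $x\in Z(s)$ and $T_xZ(s)=\ker(d_xs)$, the condition $d_xs(v)\neq 0$ is exactly $v\notin T_xZ(s)$, as required.

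For part (ii), let $q\colon X'\rightarrow X$ be the blowup at the distinct fixed points $x_1,\dots,x_k$, with exceptional divisors $F_1,\dots,F_k$. By Corollary~\ref{cor1'} the bundle $q^*\cE\otimes\cO(-F_1-\cdots-F_k)$ is nef, and its global sections are precisely the $s\in H^0(X,\cE)$ vanishing at every $x_i$; applying Theorem~\ref{sections} at $q^{-1}(x)$ then yields the desired $s$. For the inequality $h^0(\cE)\geq k+1$ I would exhibit explicit independent sections: for each $i$, applying the statement just proved to the $k-1$ points $\{x_l : l\neq i\}$ with distinguished point $x_i$ gives $s_i$ with $s_i(x_l)=0$ for $l\neq i$ and $s_i(x_i)\neq 0$, while the section $s_0$ above vanishes at all $x_i$ but not at $x$. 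Evaluating a relation $\sum_j c_j s_j=0$ at each $x_i$ forces all $c_j=0$, so $s_0,s_1,\dots,s_k$ are linearly independent.

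I expect the main obstacle to be the second assertion of part (i): one must check with care that restriction to the exceptional divisor computes the differential $d_xs$ and that $T_xZ(s)=\ker(d_xs)$, so that nonvanishing of the restricted section at $[v]\in\PP(T_xX)$ is genuinely equivalent to the transversality condition $v\notin T_xZ(s)$. The identifications $F\cong\PP(T_xX)$ and $\cO(-F)|_F\cong\cO(1)$, together with the resulting description of sections of $(p^*\cE\otimes\cO(-F))|_F$ as elements of $\Hom(T_xX,\cE_x)$, carry the geometric content; the remaining steps are routine applications of the projection formula and elementary linear independence.
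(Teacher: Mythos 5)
Your proof is correct and follows essentially the same route as the paper: blow up at the relevant fixed point(s), use Corollary~\ref{cor1} (resp.\ Corollary~\ref{cor1'}) to get nefness of the twisted pullback, identify its global sections with sections of $\cE$ vanishing at the prescribed points, and apply Theorem~\ref{sections}. The paper's proof is just a terse version of this; your expansion of the details it leaves implicit---the identification of $\tilde s|_F$ with $d_xs$ via $\cO(-F)|_F\cong\cO_{\PP(T_xX)}(1)$, the fact that $T_xZ(s)=\ker(d_xs)$, and the explicit linear independence argument for $h^0(\cE)\geq k+1$---is accurate.
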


\begin{proof}
For i), let $p\colon \widetilde{X}\to X$ be the blow-up of $x$, with
exceptional divisor $F$. We deduce from Corollary~\ref{cor1} that
$p^*(\cE)\otimes \cO(-F)$ is nef, and both assertions follow from
Theorem~\ref{sections} since
$$H^0(\widetilde{X},p^*(\cE)\otimes \cO(-F))\simeq \left\{s\in H^0(X,\cE)\mid
s(x)=0\right\}.$$ Similarly, ii) follows from
Corollary~\ref{cor1'} and Theorem~\ref{sections}.
\end{proof}

\begin{remark}
Note that in rank one, the property in Corollary~\ref{cor3} i) says that $\cE$ separates
points and tangent vectors, that is, $\cE$ is very ample. In higher rank, however, the property
is weaker than the very ampleness of $\cE$ (which by definition means the very
ampleness of $\cO_{\PP(\cE)}(1)$).
\end{remark}

\bigskip

Our next goal is to prove the characterization of trivial toric
vector bundles. This is the toric generalization of the assertion
that a vector bundle on $\PP^n$ is trivial if and only if its
restriction to every line is trivial (see \cite[Theorem 3.2.1]{OSS}). The result answers affirmatively a question posed by
V.~Shokurov. We call a toric vector bundle \emph{trivial} if it is
isomorphic to $\cO^{\oplus r}$ disregarding the equivariant
structure.

\begin{theorem}\label{trivial}
Let $\cE$ be a toric vector bundle on the complete toric variety
$X$. Then $\cE$ is trivial if and only if its restriction to every
irreducible invariant curve on $X$ is trivial.
\end{theorem}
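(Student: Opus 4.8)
The forward implication is immediate, since the restriction of a trivial bundle to any subvariety is trivial; so the plan is to treat the converse by induction on $r=\rank\cE$, the point being to split off a trivial equivariant line subbundle whose quotient again restricts trivially to every invariant curve. The base case $r\le 1$ is clear: a line bundle that has degree zero on every invariant curve has globally linear support function and is therefore trivial.

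For the inductive step I would first observe that $\cE$ is nef, since each restriction $\cE\vert_C\simeq\cO_C^{\oplus r}$ is nef and Theorem~\ref{char_nef} applies. Fixing a torus-fixed point $x_\sigma$, Theorem~\ref{sections} then produces a global section that is nonvanishing at $x_\sigma$, and after passing to isotypical components I may take it to be a $T$-eigensection $s$, corresponding to an equivariant map $\cL_w\to\cE$ for some $w\in M$.

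The crux is to upgrade $s$ to a \emph{nowhere}-vanishing section, and here I would use triviality on curves directly. For any invariant curve $C$ the space $H^0(C,\cE\vert_C)\cong H^0(\PP^1,\cO_{\PP^1}^{\oplus r})$ consists, in any trivialization, of constant vector-valued sections, so $s\vert_C$ either vanishes identically or is nowhere zero on $C$. Since $s(x_\sigma)\neq 0$, the section $s$ is nowhere zero on every invariant curve through $x_\sigma$, hence nonzero at the adjacent torus-fixed points; by connectivity of the graph whose vertices are the fixed points and whose edges are the invariant curves, $s$ is nonzero on the entire one-skeleton. As $Z(s)$ is closed and $T$-invariant, and every nonempty closed $T$-invariant subset of the complete variety $X$ meets the fixed locus, I conclude $Z(s)=\emptyset$.

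Finally, $s$ exhibits $\cL_w$ as an equivariant subbundle with locally free quotient $\cF=\cE/\cL_w$, a toric vector bundle of rank $r-1$. Restricting $0\to\cL_w\to\cE\to\cF\to 0$ to an invariant curve $C$, the quotient $\cF\vert_C$ is a quotient of the trivial bundle $\cE\vert_C$, hence globally generated and nef, with $\det(\cF\vert_C)\cong\cO_C$; on $\PP^1$ this forces $\cF\vert_C$ to be trivial, so $\cF$ satisfies the inductive hypothesis and is trivial, while $\cL_w\cong\cO_X$ as an ordinary line bundle. The extension class then lies in $\operatorname{Ext}^1(\cF,\cL_w)=H^1(X,\cF^\vee\otimes\cL_w)$, which vanishes because $\cF^\vee\otimes\cL_w$ is an ordinary trivial bundle and $H^1(X,\cO_X)=0$ on a complete toric variety; hence the sequence splits and $\cE\cong\cO_X^{\oplus r}$. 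I expect the main obstacle to be precisely the middle step, namely turning a section that is nonzero at a single point into a nowhere-vanishing one, which is exactly where curve-triviality and the connectivity of the one-skeleton are essential.
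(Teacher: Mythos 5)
Your argument is correct, and its overall skeleton coincides with the paper's: induction on the rank, nefness of $\cE$ via Theorem~\ref{char_nef}, an isotypical global section nonvanishing at a fixed point via Theorem~\ref{sections}, promotion of that section to a nowhere-vanishing one, and the splitting of $0\to\cL\to\cE\to\cE/\cL\to 0$ from the inductive hypothesis together with $H^1(X,\cO_X)=0$. The genuine difference is in the crucial promotion step, which you correctly identify as the crux. The paper proves it by first pulling back along a proper birational toric morphism (subdividing $\Delta$ so that $\Fl(v)$ is constant on the interiors of maximal cones) and then applying Proposition~\ref{two affines}, which rests on the continuity of the interpolation filtrations from Section~5: this gives $\cE\vert_{U_{\sigma_1}\cup U_{\sigma_2}}\simeq\bigoplus_i \cL_{u_i,u'_i}$ for adjacent maximal cones, curve-triviality forces $u_i=u'_i$, so $\cE$ is non-equivariantly trivial on each such union of charts, and an isotypical section with a nonzero constant coordinate over $U_{\sigma_1}$ keeps that constant coordinate over the whole union, hence is nonvanishing there. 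Your route instead uses curve-triviality directly: a section of $\cE\vert_C\simeq\cO_{\PP^1}^{\oplus r}$ is a constant tuple, hence identically zero or nowhere zero on $C$; nonvanishing then propagates along the connected graph of fixed points and invariant curves, and since the zero locus of an eigensection is closed and $T$-invariant, and any nonempty such subset of a complete toric variety contains a fixed point, you conclude $Z(s)=\emptyset$. This is more elementary --- it needs no subdivision and no input from the filtration machinery beyond what Theorem~\ref{sections} itself already consumes --- and it exploits the triviality hypothesis exactly where it is strongest. What the paper's argument yields in exchange is slightly finer local information (after subdivision, $\cE$ is trivial on every union of two adjacent invariant affine charts, not just on the one-skeleton), but that extra information is not needed for the theorem.
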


\begin{proof}
We prove the assertion by induction on $r={\rm rk}(\cE)$, the case
$r=0$ being vacuous. Note first that since the restriction of $\cE$
to every invariant curve is trivial, in particular nef, it follows
from Theorem~\ref{char_nef} that $\cE$ is nef. Therefore
Theorem~\ref{sections} implies that for every point $x\in X$, there
is a a section $s\in \Gamma(X,\cE)$ that does not vanish at $x$. Fix
a maximal cone $\sigma\in X$, and choose a section $s_0$ that does
not vanish at $x_{\sigma}$. We may assume that $s_0$ is
$\chi^u$-isotypical for some $u\in M$.

\noindent\begin{claim} If $s$ is an isotypical section of $\cE$ that
does not vanish at a fixed point $x_{\sigma}$ then $s$ is nowhere
vanishing.
\end{claim}

The claim implies that our section $s_0$ gives a trivial equivariant
subbundle $\cL$ of $\cE$, with ${\rm rk}(\cL)=1$. The restriction of
the quotient $\cE/\cL$ to any invariant curve is trivial, hence the
inductive assumption implies that $\cE/\cL$ is trivial. Since
$H^1(X,\cO_X)=0$, it follows that the exact sequence
\[
0 \rightarrow \cL \rightarrow \cE \rightarrow \cE/ \cL \rightarrow 0
\]
splits (non-equivariantly),  hence $\cE$ is trivial. Therefore, it is
enough to prove the claim.

If  we have a proper, birational toric morphism 
$f\colon X'\to X$ then it is
enough to prove the claim for the section
$f^*(s)\in\Gamma(X',f^*\cE)$. After subdividing $\Delta$, we may
assume that $\Fl(v)$ is constant on the interior of every maximal
cone $\sigma$. Proposition~\ref{two affines} implies that given two
maximal cones $\sigma_1$ and $\sigma_2$ whose intersection $\tau$
has codimension one, if $U=U_{\sigma_1}\cup U_{\sigma_2}$, then
$$\cE\vert_U\simeq\cL_{u_1,u'_1}\oplus\cdots\oplus\cL_{u_r,u'_r}$$ for suitable
$u_i$, $u'_i\in M$. Since the restriction $\cE\vert_{V(\tau)}$ is
trivial, it follows that $u_i=u'_i$ for every $i$. Therefore the
restriction $\cE\vert_U$ is trivial (disregarding the equivariant
structure).

Suppose $s\in\Gamma(X, \cE)$ is an isotypical section that does not
vanish at $x_{\sigma_1}$.  Then the restriction  $s\vert_{U_{\sigma_1}}$
corresponds to $(\phi_1,\ldots,\phi_r)$, where one of the $\phi_i$ is a nonzero constant function, via the isomorphism $\cE\vert_U\simeq
\cO_U^{\oplus r}$. The analogous assertion then holds for
$s\vert_U$, and we conclude that $s$ does not vanish at
$x_{\sigma_2}$. This implies the claim, and the theorem follows.
\end{proof}

\section{Open questions}

In this section we list several open questions. The first questions
are motivated by the corresponding results in the rank one case. It
is likely that the situation in higher rank is more complicated, but
it would be desirable to have explicit examples to illustrate the
pathologies in rank $>1$.

\begin{question}\label{q3}
Suppose that $\cE$ is a toric vector bundle on the complete toric
variety $X$. Is the $k$-algebra
\begin{equation}\label{algebra_of_E}
\bigoplus_{m\geq 0}H^0(X, {\rm Sym}^m(\cE))
\end{equation}
finitely generated?
\end{question}

For the corresponding assertion in the case of line bundles, see for
example \cite{Eli}. Note that if $f\colon Y\to X$ is a toric resolution of singularities 
of $X$, then the projection formula implies that the $k$-algebra corresponding to $\cE$
is isomorphic to the $k$-algebra corresponding to $f^*(\cE)$. Therefore it is enough to 
consider Question~\ref{q3} when $X$ is smooth.

One can ask whether  $\PP(\cE)$ satisfies the following stronger property.
Suppose that $Y$ is a complete variety such that $\Pic(Y)$ is finitely generated 
(note that every projective bundle over a toric variety has this property). Following 
\cite{HK} we say that $Y$ is a Mori Dream
Space if for every finite set of line bundles
$L_1,\ldots,L_r$ on $Y$, the $k$-algebra
\begin{equation}\label{def_Cox1}
\bigoplus_{m_1,\ldots,m_r\geq 0}H^0(Y, L_1^{m_1}\otimes\cdots\otimes
L_r^{m_r})
\end{equation} 
is finitely generated. 
(The definition in \emph{loc. cit.}
 requires $X$ to be $\QQ$-factorial, but this condition is not relevant for us.)
 Equivalently, it is enough to put the condition that the $k$-algebra
 \begin{equation}\label{def_Cox2}
\bigoplus_{m_1,\ldots,m_r\in\ZZ}H^0(Y,L_1^{m_1}\otimes\cdots\otimes
L_r^{m_r})
\end{equation} 
is finitely generated, when $L_1,\ldots,L_r$ generate $\Pic(X)$ as a group
(if they generate $\Pic(X)$ as a semigroup, then it is enough to let $m_1,\ldots,m_r$
vary over $\NN$).

It is well-known that a complete toric variety is a Mori Dream Space. This follows using the fact
that in this case the $k$-algebra (\ref{def_Cox1}) is isomorphic to
$\bigoplus_{m\geq 0} H^0(\PP,\cO(m))$, where $\PP=\PP(L_1\oplus\cdots\oplus L_m)$
is again a toric variety. In fact, if $X$ is smooth, and if $L_1,\ldots,L_r$ form a basis of
$\Pic(X)$, then the $k$-algebra (\ref{def_Cox2}) is a polynomial ring, the 
\emph{homogeneous coordinate ring} of $X$
(see \cite{Cox}).

\begin{question}\label{q4}
If $X$ is a complete toric variety, and if $\cE$ is a
toric vector bundle on $X$, is $\PP(\cE)$ a Mori Dream Space?
Since $\Pic(\PP(\cE))$ is generated by $\cO(1)$ and by the pull-backs of the line bundles 
on $X$, this can be restated as follows: if
$L_1,\ldots,L_r$ are arbitrary line bundles on $X$, is the 
$k$-algebra
\begin{equation}\label{MDS}
\bigoplus_{m,m_1,\ldots,m_r\geq 0}H^0(X,{\rm Sym}^m(\cE)\otimes
L_1^{m_1}\otimes\cdots\otimes L_r^{m_r})
\end{equation}
 finitely generated?
 \end{question}
 
\noindent Note that Remark~\ref{rem_nef_cone} gives some positive evidence in the
direction of this question (it is a general fact that the Mori cone of a Mori Dream
Space is rational polyhedral).

\begin{remark}\label{equivalent}
It is clear that a positive answer to Question~\ref{q4} implies a positive answer to Question~\ref{q3}.
However, the converse is also true: if the $k$-algebra (\ref{algebra_of_E})
is finitely generated for every toric vector bundle, then Question~\ref{q4} has a positive answer.
Indeed, given a toric vector bundle $\cE$ on $X$, and $L_1,\ldots,L_r\in\Pic(X)$, let
$\cE'=\cE\oplus L_1\oplus\cdots\oplus L_r$. Since
$$\bigoplus_{m,m_1,\ldots,m_r\geq 0}H^0(X,{\rm Sym}^m(\cE)\otimes
L_1^{m_1}\otimes\cdots\otimes L_r^{m_r})
\simeq\bigoplus_{m\geq 0}H^0(X, {\rm Sym}^m(\cE')),$$
we see that this is a finitely generated $k$-algebra.
\end{remark}

\begin{remark}
In connection with Question~\ref{q4}, note the following module-theoretic finiteness
statement. 
 Suppose that $X$ is a smooth toric variety, and that $\cE$ is a reflexive sheaf
on $X$, not necessarily equivariant.
Choose a basis $L_1,\ldots,L_r$ for $\Pic(X)$, and let $S$ 
be the homogeneous coordinate ring of $X$. If 
$$M=\bigoplus_{m_1,\ldots,m_r\in\ZZ}H^0(X,\cE\otimes L_1^{m_1}\otimes\cdots\otimes
L_r^{m_r}),$$
then $M$ is finitely generated as an $S$-module.  This follows using some basic facts about the homogeneous coordinate ring for which we refer to \cite{Cox}. Indeed, via the correspondence
between graded $S$-modules and coherent sheaves, one can express every coherent sheaf on 
$X$ as a quotient of a direct sum of line bundles on $X$. If we write $\cE^{\vee}$ as the quotient 
of $L'_1\oplus\cdots\oplus L'_s$, where $L'_i=L_1^{m_{i,1}}\otimes\cdots\otimes L_r^{m_{i,r}}$,
we see by taking duals that $M$ is embedded in the free module $S^{\oplus s}$. Since $S$ is Noetherian, it follows that $M$ is a finitely generated $S$-module.
\end{remark}

\bigskip

Recall that a vector bundle $\cE$ is \emph{very ample} if the line
bundle $\cO(1)$ on $\PP(\cE)$ is very ample. In light of Examples~\ref{example11}, \ref{example12}
and \ref{example13}, it seems unlikely that the following question would have a positive answer,
but it would be nice to have explicit counterexamples.

\begin{question}\label{q2}
Let $\cE$ be an ample toric vector bundle on a smooth complete toric
variety $X$. Is $\cE$ very ample? Is $\cE$ globally generated?
\end{question}

 Examples~\ref{non_vanishing} and \ref{Fujino} show that the higher cohomology groups of an ample toric vector bundle
 $\cE$
 do not vanish in general. The cohomology of $\cE$ is canonically identified with the cohomology
 of the ample line bundle $\cO(1)$ on $\PP(\cE)$, hence some projectivized toric vector bundles have ample line bundles with non-vanishing cohomology. It should be interesting to find conditions on 
 $\cE$ that guarantee that  ample line bundles on $\PP(\cE)$ have no higher cohomology. In characteristic 
 $p$, a condition that guarantees such vanishing is the splitting of the Frobenius morphism on
 $\PP(\cE)$ (we refer to \cite{BK} for basic facts about Frobenius split varieties). 

\begin{question}
Let $\cE$ be a toric vector bundle on a complete toric variety $X$ over a field
of positive characteristic. When is $\PP(\cE)$ Frobenius split?
\end{question}

\noindent We point out that this condition is independent of twisting $\cE$ by a line bundle. 
Note also that every toric variety is Frobenius split. In particular, $\PP(\cE)$ is Frobenius split
if $\cE$ decomposes as a sum of line bundles. Furthermore, some indecomposable toric
vector bundles have Frobenius split projectivizations: for example, the tangent bundle on the projective
space $\PP^n$. Indeed,
 we have 
 $$\PP(T_{\PP^n})\simeq\{(q,H)\in\PP^n\times (\PP^n)^*\mid q\in H\},$$
 hence $\PP(T_{\PP^n})$ is a homogeneous variety, and all homogeneous varieties are Frobenius split.

\bigskip

It would be interesting to find a criterion for a toric vector
bundle to be big. We propose the following.

\begin{question}\label{q5}
Let $\cE$ be a toric vector bundle on the projective toric variety
$X$. Is it true that $\cE$ is big if and only if for every morphism
$f\colon\PP^1\to X$ whose image intersects the torus, we have
$f^*(\cE)$ big?
\end{question}

It is easy to see that if $\cE$ is a big toric vector bundle on the
projective toric variety $X$, and if $f\colon C\to X$ is a
non-constant morphism from a projective curve $C$ such that ${\rm
Im}(f)$ intersects the torus, then $f^*(\cE)$ is big. Indeed, one
can use the fact that a vector bundle $\cE$ is big if and only if for some
(every) ample line bundle $A$ we have
$$H^0(X,{\rm Sym}^m(\cE)\otimes A^{-1})\neq 0$$
for some $m>0$ \cite[Example~6.1.23]{positivity}. In our
situation, fix an ample line bundle $A$ on $X$, and consider on $A$
an equivariant structure. Let $m>0$ be such that there is a nonzero
$s\in H^0(X,{\rm Sym}^m(\cE)\otimes A^{-1})$. It is clear that we
may assume that $s$ is an eigenvalue for the corresponding torus
action. In particular, the zero-set $Z(s)$ is contained in the
complement of the torus. Therefore $f^*(s)$ gives a nonzero section
in $H^0(C,f^*({\rm Sym}^m(\cE))\otimes f^*(A)^{-1})$. Since $f^*(A)$ is ample on
$C$, it follows that $f^*(\cE)$ is big.

For line bundles it is known that the converse is also true (see \S
3 in \cite{Payne3}). This is the toric analogue of a result from
\cite{BDPP} describing the big cone as the dual of the cone of
movable curves.

\bigskip

In light of Corollary~\ref{cor2}, we consider the following

\begin{question}\label{q6}
Let $L$ be an ample line bundle on the projective toric variety $X$.
Is the vector bundle $\cM_L$ semistable, with respect to some choice of polarization?
\end{question}

We mention a similar result for curves: if $X$ is
a smooth projective curve of genus $g$, then $\cM_L$ is a semistable
bundle if either ${\rm deg}(L)\geq 2g+1$ (see \cite{EL}), or if
$L=\omega_X$ (see \cite{PR}).

\bigskip

While we have focused on algebraic notions of positivity, there are parallel notions of positivity in differential geometry, and the relation between these different types of positivity 
is not completely understood.  Recall that a vector bundle is positive in the sense of Griffiths if it admits a Hermitian metric such that the quadratic form associated to the curvature tensor is positive definite.  Griffiths proved that any such positive vector bundle is ample and asked whether the converse is true.  See \cite{Griffiths} for background and further details.  This problem has remained open for forty years and is known in few cases---for very ample vector bundles \cite[Theorem~A]{Griffiths}, and for ample vector bundles on curves \cite{CampanaFlenner}.  Ample toric vector bundles should be an interesting testing ground for the existence of such positive metrics.

\begin{question} \label{q7}
Let $\cE$ be an ample toric vector bundle on a smooth complex projective toric variety.  Is $\cE$ necessarily positive in the sense of Griffiths?
\end{question}

\bigskip

With his differential geometric approach to positivity, Griffiths advanced a program to relate ampleness to numerical positivity.  Fulton and Lazarsfeld completed one part of this program by proving that the set of polynomials in Chern classes that are numerically positive for ample vector bundles are exactly the Schur positive polynomials, the nonzero polynomials that are nonnegative linear combinations of the Schur polynomials \cite{FultonLazarsfeld}.  It is natural to wonder whether this result could have been predicted through a careful study of toric vector bundles.

\begin{question} \label{q8}
Are the Schur positive polynomials the only numerically positive polynomials for ample toric vector bundles?
\end{question}

\noindent It should also be interesting to look for a combinatorial proof that the Schur positive polynomials are numerically positive for ample toric vector bundles.  One natural approach would be to use the characterization of equivariant Chern classes of toric vector bundles in terms of piecewise polynomial functions \cite[Theorem~3]{chow}, together with the combinatorial formulas for localization on toric varieties from \cite{localization}.

\providecommand{\bysame}{\leavevmode \hbox \o3em
{\hrulefill}\thinspace}


\end{document}